\numberwithin{equation}{section}
\newtheorem{theorem}[subsubsection]{Theorem}
\newtheorem{lemma}[theorem]{Lemma}
\newtheorem{prop}[theorem]{Proposition}
\newtheorem{corollary}[subsubsection]{Corollary}
\newtheorem{MainTheorem}{Theorem}
\theoremstyle{definition}
\newtheorem{definition}[subsubsection]{Definition}
\newtheorem{remark}[theorem]{Remark}
\newtheorem{example}[subsubsection]{Example}
\newcommand{\indexi}{s}
\newcommand{\cB}{\mathcal{B}}
\newcommand{\hook}{\mathsf{h}}
\newcommand{\tR}{\mathsf{R}}
\newcommand{\Rep}{\mathsf{Rep}}
\newcommand{\tto}{\twoheadrightarrow}
\newcommand{\mN}{\mathbb{N}}
\newcommand{\mG}{\mathbb{G}}
\newcommand{\mZ}{\mathbb{Z}}
\newcommand{\mQ}{\mathbb{Q}}
\newcommand{\mC}{\mathbb{C}}
\newcommand{\End}{\mathrm{End}}
\newcommand{\im}{\mathrm{im}}
\newcommand{\Hom}{\mathrm{Hom}}
\newcommand{\0}{{\bar 0}}
\renewcommand{\1}{{\bar 1}}
\renewcommand{\k}{\Bbbk}
\newcommand{\sv}{\mathsf{v}}
\begin{document}
\title[Invariant theory]{Super invariant theory in positive characteristic}
\author{Kevin Coulembier, Pavel Etingof, Alexander Kleshchev and Victor Ostrik}
\address{K.C.: School of Mathematics and Statistics, University of Sydney, NSW 2006, Australia}
\email{kevin.coulembier@sydney.edu.au}

\address{P.E.: Department of Mathematics, MIT, Cambridge, MA USA 02139
}
\email{etingof@math.mit.edu}

\address{A.K.: Department of Mathematics,
University of Oregon, Eugene, OR 97403, USA}
\email{klesh@uoregon.edu}

%\address{V.O.: Laboratory of Algebraic Geometry, National Research University Higher School of Economics, Moscow, Russia}
\address{V.O.: Department of Mathematics,
University of Oregon, Eugene, OR 97403, USA}
\email{vostrik@math.uoregon.edu}

\dedicatory{To the memory of Irina Dmitrievna Suprunenko}
	
\subjclass[2020]{17B10, 20G05, 16W22}

\thanks{
The work of the first author was partially supported by ARC grant DP200100712. 
The work of the second author was partially supported by the NSF grant DMS - 1916120.
The work of third author was partially supported by the NSF grant DMS-2101791 and 
Charles Simonyi Endowment at the Institute for Advanced Study. The third author also thanks the Isaac Newton Institute for Mathematical Sciences, Cambridge, for support and hospitality during the programme `Groups, representations and applications: new perspectives' where part of the work on this paper was undertaken. 
}

%\keywords{}
%\subjclass[2020]{}

\begin{abstract}
We study invariant theory of the general linear supergroup in positive characteristic. In particular, we determine when the symmetric group algebra acts faithfully on tensor superspace and demonstrate that the symmetric group does not always generate all invariants.
\end{abstract}

\maketitle

%%%%%%%%%%%%%%%%%%%%%%%%%%%%%%%%%%%%%%%%%%%%%%%%%%%%%%%%%%%%%%%%%%%%%%%%%%%%%%%%%%%%%%%%%%%%%%%%%%%%%%%%%%%%

\section*{Introduction}

For the entire paper we let $\k$ be a field of characteristic $p\ge 0$. We study invariant theory for the general linear supergroup over $\k$ when $p>2$.

If $p\not=2$, for $m,n\in\mN$, we consider the corresponding super vector space 
$$V=V_{m|n}:=\k^{m|n}.$$ For each $r\in\mZ_{>0}$, we have the algebra morphism
\begin{equation}\label{EqPhi}
\Phi=\Phi^{(r)}_{m|n}\,:\, \k S_r\;\to\; \End_{GL(m|n)}(V_{m|n}^{\otimes r}).
\end{equation}

Typically the `First Fundamental Theorem of Invariant Theory' establishes that $\Phi^{(r)}_{m|n}$ is surjective. This is known to be the case if $p=0$ or $n=0$, see for instance \cite{Berele, dCP}. 

The `Second Fundamental Theorem of Invariant Theory' then usually refers to descriptions of the kernel of $\Phi^{(r)}_{m|n}$; in coarsest form this is the answer to the question of when $\Phi^{(r)}_{m|n}$ is injective. Again the answer to the latter question is known and very succinct if $p=0$ or $n=0$, and does not depend on $p$ if $n=0$, see~\cite{Berele, dCP}.

In \cite{CEO} it was observed that the injectivity question for $\Phi^{(r)}_{m|n}$ {\em does} depend on $p$ when $mn\not=0$, and an upper bound on $r$ for which the morphism can be injective was given. In the current paper we show that this upper bound is in general not sharp, and determine the precise sharp bound. 
%in Theorem~\ref{TMain0}. 
The question turns out to have a much more intricate answer than the previously known cases.

In fact, there exists a unique positive integer $r_p(m,n)$ such that the $\Phi^{(r)}_{m|n}$ is injective if and only if $r\leq r_p(m,n)$. Moreover, it is easy to see that $r_p(m,n)=r_p(n,m)$ so it is harmless to assume that $m\geq n$. We prove (see Theorem~\ref{TMain0}):

\begin{MainTheorem}\label{TA}
Let $p>2$, $m\geq n$ and set $d=(p+n-m)/2$. Then $$
r_p(m,n)\;=\; \begin{cases}
m +(p-1)n & \mbox{ if } d<2;\\
m+n+mn&\mbox{ if } d>n,\; (\mbox{\it i.e. }p>m+n);\\
\lceil  (n+1)p-1-d^2\rceil &\mbox{ if } 2\le  d\le n.\\
\end{cases}
$$
\end{MainTheorem}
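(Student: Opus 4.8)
The plan is to reduce the computation of $r_p(m,n)$ to a concrete combinatorial/representation-theoretic question about the Brauer-type algebra governing $\End_{GL(m|n)}(V^{\otimes r})$, or more precisely about the kernel of $\Phi$. The first step is to recall from \cite{CEO} the characterization of $\ker \Phi^{(r)}_{m|n}$ in terms of the two-sided ideal of $\k S_r$ generated by a suitable "super" antisymmetrizer/symmetrizer idempotent (of shape related to an $(m+1)\times(n+1)$ rectangle), and to translate injectivity of $\Phi^{(r)}_{m|n}$ into the nonvanishing of a certain element — essentially asking whether a particular product of Young symmetrizers, or a Garnir-type relation, survives in characteristic $p$. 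Concretely, $\Phi$ fails to be injective exactly when $S_r$ is "large enough" that the $(m|n)$-Cauchy identity forces a linear dependence; in characteristic $0$ this is the classical statement that $\Phi$ is injective iff $r \le (m+1)(n+1)-1 = m+n+mn$, and the content of Theorem~A is that in characteristic $p$ the threshold drops (the middle case) or is replaced by a more delicate quantity (the third case), while for small $p$ relative to $m-n$ it drops all the way to $m+(p-1)n$.

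**The three regimes.**

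I would treat the three cases separately but via a unified mechanism. For the regime $d > n$, i.e. $p > m+n$, the prime is larger than the relevant rectangle, so all the symmetrizers/antisymmetrizers in play behave as in characteristic $0$; here one simply checks that the classical argument of \cite{Berele, dCP} goes through verbatim, giving $r_p(m,n) = m+n+mn$. For the small-$d$ regime $d < 2$ (equivalently $m - n \ge p - 2$), the dominant obstruction is the failure of the $p$-th symmetric (or divided) power to behave well: the $(p-1)$-st "thickness" in the $n$ wide columns is all that is available, which should yield the bound $m + (p-1)n$ by analyzing when a length-$p$ column (an alternating tensor in too many odd variables) first becomes forced — this is where the number $p-1 = $ the Steinberg-type cutoff appears. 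The hard middle case $2 \le d \le n$ is the genuinely new phenomenon: here one must optimize, over all Young-diagram shapes $\lambda$ fitting the constraints imposed by $(m|n)$ and by $p$, the largest $r = |\lambda|$ for which the corresponding Specht/Weyl module does not already lie in $\ker \Phi$. The expression $\lceil (n+1)p - 1 - d^2 \rceil$ strongly suggests that the extremal shape is (a $p$-analogue of) the $(m+1)\times(n+1)$ rectangle with a triangular "staircase" of side $d$ removed — removing a staircase of side $d$ from a rectangle removes exactly $\binom{d+1}{2} \approx d^2/2$ boxes, and the two staircases (one at each relevant corner) account for the $-d^2$ term, with the $(n+1)p$ replacing the $(n+1)(m+1)$ of characteristic $0$ because $p$ now caps the column lengths.

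**Executing the key step.**

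The technical heart is therefore: (i) show $\Phi^{(r)}_{m|n}$ is \emph{not} injective for $r = r_p(m,n)+1$ by exhibiting an explicit nonzero element of the kernel — I expect this to come from a Garnir relation or a $p$-Carter-criterion computation on the boundary shape, showing that in characteristic $p$ one extra box forces a relation that is absent in characteristic $0$; and (ii) show $\Phi^{(r)}_{m|n}$ \emph{is} injective for $r = r_p(m,n)$, which is the harder direction, requiring a lower bound on $\mathrm{rank}\,\Phi$ — typically one produces $r!$-many linearly independent images, or equivalently shows the relevant bilinear form (the "contravariant" or "Jantzen-type" form on $V^{\otimes r}$ pulled back to $\k S_r$) is nondegenerate up to this $r$. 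A clean way to organize (ii) is to filter $V^{\otimes r}$ by $GL(m|n)$-subquotients and use that the mixed tensor / Schur superalgebra in characteristic $p$ has a known "highest weight" structure, so that $\Phi$ is injective iff no composition factor of $\k S_r$ (as a module over itself, i.e. no Specht module in the relevant block) is entirely annihilated — and then reduce to checking which $p$-restricted / column-$p$-bounded partitions of $r$ have at most $m$ rows after transposing the $n$-constrained part, a bookkeeping exercise whose optimum is exactly the stated formula.

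**Main obstacle.**

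The main obstacle I anticipate is the injectivity (lower-bound) direction in the middle case: ruling out \emph{every} kernel element for $r$ up to $\lceil(n+1)p-1-d^2\rceil$ requires a uniform argument valid for all the boundary shapes simultaneously, and the appearance of the ceiling function and the quadratic $d^2$ term signals that naive inductions on $m$, $n$, or $p$ will not close cleanly — one likely needs a careful case analysis on the parity of $p+n-m$ and a sharp estimate (perhaps via the hook-length/Carter criterion applied to a two-corner staircase shape) of exactly when the last available box becomes obstructed. I would expect the authors to isolate this as a separate lemma identifying the extremal partition and computing its size, with the $+mn$ versus $(p-1)n$ versus $-d^2$ dichotomy falling out of which of the two "staircase corners" is the binding constraint.
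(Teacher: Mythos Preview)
Your proposal correctly identifies the three-regime structure and that the injectivity (lower bound) direction is the hard one, but the concrete mechanisms you suggest are not the ones that actually work, and several key ideas are missing.

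\textbf{The missing reformulation.} The paper does not work with kernel ideals, Garnir relations, Carter criteria, or contravariant forms. Instead, since $\k S_r$ is self-injective, faithfulness of $V^{\otimes r}$ is equivalent to every indecomposable projective appearing as a direct summand. Via Donkin's theory of sign-Young modules, the indecomposable summands of $V^{\otimes r}\cong\bigoplus_{(\lambda,\mu)}M^{\lambda|\mu}$ are indexed by $(\alpha,\beta)\in\Lambda^+_p(r|r,r)$, and the multiplicity of $Y^{\alpha|p\beta}$ in $M^{\lambda|\mu}$ is $\dim L(\alpha|p\beta)_{\lambda|\mu}$ for the simple $GL(r|r)$-module $L(\alpha|p\beta)$. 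The projectives are exactly the $Y^{\alpha|0}$ with $\alpha$ $p$-restricted. Thus the entire problem becomes: for which $r$ does every $p$-restricted $\alpha\vdash r$ admit some $(\lambda,\mu)\in\Lambda(m|n,r)$ with $L(\alpha|0)_{\lambda|\mu}\neq 0$? This is a Suprunenko-type question about non-vanishing of weight spaces in irreducible supergroup representations, and it is this translation that makes both directions tractable.

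\textbf{The actual techniques.} For the upper bound (non-injectivity at $t+1$) the paper does not use Garnir relations in $\k S_r$; it uses \emph{odd reflections} to compute the $B_{m,n}$-highest weight of $L(\alpha|0)$ for an explicit $p$-restricted partition $\alpha_{(n,s)}=(s^{m+1+a(p-s)},b^{p-s})$ and shows no $(\lambda|\mu)$-weight with $(\lambda,\mu)\in\Lambda(m|n)$ lies below it. For the lower bound (injectivity at $t$) the paper constructs, for each $\alpha\vdash t$, an explicit product $Y$ of odd lowering operators $E_{i',j}$ in the hyperalgebra of $\mathfrak{gl}(t|t)$ and proves $Yv_+\neq 0$ by exhibiting an $X$ with $XYv_+$ a nonzero scalar multiple of $v_+$; the scalar is a product of integers all $<p$, hence nonzero in $\k$. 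The combinatorial heart is a packing lemma (the inequality $b_0+\cdots+b_k\le n$) that guarantees the target weight lands in $\Lambda(m|n)$.

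\textbf{The shape of the formula.} Your ``rectangle minus two staircases'' picture is not what happens. The formula arises because $r_p(m,n)=\min_{1\le s\le n+1} t^s_p(m,n)$ with $t^s_p(m,n)=s(m+1)+(p-s)(n+1-s)-1$, a quadratic in $s$ with vertex at $d=(p+n-m)/2$; the three cases are simply whether the constrained minimum occurs at $s=1$, at $s=n+1$, or in the interior. The extremal partitions $\alpha_{(n,s)}$ are (essentially) two-column-value shapes, not staircase-truncated rectangles.
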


We also demonstrate that the dependence on $p$ for the injectivity question implies that $\Phi^{(r)}_{m|n}$ is not always surjective either. % (for $mn\not=0$ and $p>2$). 
We prove (see Theorem~\ref{ThmSurj}):

\begin{MainTheorem}\label{TB}
Let $p>2$.
\begin{enumerate}
\item[{\rm (i)}] If $r_p(m,n)\;< \; r\;\le \; m+n+mn$ then 
the morphism $\Phi_{m|n}^{(r)}$
is neither injective nor surjective.
\item[{\rm (ii)}] If the morphism $\Phi_{m|n}^{(r)}$
is surjective for all $r\in\mN$ then 
$mn=0$ or $m+n<p.$
%\item[{\rm (iii)}] The morphism $\Phi^{(r)}_{1|1}$ is surjective for all $r$.
\end{enumerate}
\end{MainTheorem}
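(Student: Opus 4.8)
For part~(i), non-injectivity is immediate: by Theorem~\ref{TA} (and the sentence preceding it) the morphism $\Phi^{(r)}_{m|n}$ is injective if and only if $r\le r_p(m,n)$, so it fails to be injective once $r>r_p(m,n)$. The real content is non-surjectivity, which I would prove by a semicontinuity (rank) argument comparing characteristic $p$ with characteristic $0$. Everything in \eqref{EqPhi} is defined over $\mZ$, and two elementary facts pull in opposite directions: the rank of $\Phi^{(r)}_{m|n}$ can only drop upon reduction modulo $p$, whereas the dimension of the space of $GL(m|n)$-invariants in $\End(V^{\otimes r})$ can only grow. Since over $\mQ$ the map $\Phi^{(r)}_{m|n}$ is surjective and is injective precisely when $r\le m+n+mn$ (see \cite{Berele, dCP}), it follows that, for $r$ in the range of the statement, surjectivity of $\Phi^{(r)}_{m|n}$ over $\k$ would squeeze all these dimensions together and force $\Phi^{(r)}_{m|n}$ to be injective over $\k$ — impossible for $r>r_p(m,n)$. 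Part~(ii) is then obtained from part~(i) together with an arithmetic check of the formula of Theorem~\ref{TA}.

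In detail, let $\Phi_\mZ\colon\mZ S_r\to\End_\mZ(V_\mZ^{\otimes r})$ be the $\mZ$-form of \eqref{EqPhi} (its matrix has entries in $\{0,\pm1\}$). A minor nonvanishing over $\mQ$ may vanish mod $p$, so $\dim_\k\im\Phi_\k\le\dim_\mQ\im\Phi_\mQ$ for any field $\k$ of characteristic $p$. On the other hand $\End_\mZ(V_\mZ^{\otimes r})$ and $\cO(GL(m|n))$ are free over $\mZ$, so the submodule of $GL(m|n)$-invariants in $\End_\mZ(V_\mZ^{\otimes r})$, being the kernel of a $\mZ$-linear map into the torsion-free module $\End_\mZ(V_\mZ^{\otimes r})\otimes_\mZ\cO(GL(m|n))$, has torsion-free quotient and is therefore pure; reducing the coaction equation mod $p$ embeds it into the $GL(m|n)$-invariants over $\k$, whence $\dim_\mQ\End_{GL(m|n)}(V_\mQ^{\otimes r})\le\dim_\k\End_{GL(m|n)}(V_\k^{\otimes r})$. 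Now suppose $r_p(m,n)<r\le m+n+mn$ and, for contradiction, that $\Phi^{(r)}_{m|n}$ is surjective over $\k$; then
\begin{align*}
\dim_\k\im\Phi_\k=\dim_\k\End_{GL(m|n)}(V_\k^{\otimes r})&\ge\dim_\mQ\End_{GL(m|n)}(V_\mQ^{\otimes r})\\
&=\dim_\mQ\im\Phi_\mQ\ge\dim_\k\im\Phi_\k,
\end{align*}
the middle equality being the characteristic-$0$ First Fundamental Theorem; so all the terms agree, and by rank--nullity $\dim_\k\ker\Phi_\k=r!-\dim_\mQ\im\Phi_\mQ=\dim_\mQ\ker\Phi_\mQ=0$ since $r\le m+n+mn$. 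Thus $\Phi^{(r)}_{m|n}$ is injective over $\k$, contradicting $r>r_p(m,n)$. (The inequality $\dim_\k\ker\Phi_\k\ge\dim_\mQ\ker\Phi_\mQ$ used here also gives $r_p(m,n)\le m+n+mn$ unconditionally.)

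For part~(ii) I would argue by contraposition: assuming $mn\ne0$ and $m+n\ge p$, I must produce an $r$ for which $\Phi^{(r)}_{m|n}$ is not surjective, and by part~(i) it suffices to prove the strict inequality $r_p(m,n)<m+n+mn$ (so that $r=m+n+mn$ lies in the range of part~(i)). This is a case check against the formula of Theorem~\ref{TA}. Since $p\le m+n$ we have $d=(p+n-m)/2\le n$, so the case $d>n$ does not arise. If $d<2$, then $r_p(m,n)=m+(p-1)n$, and combining $p\le m+n$ with $p\le m-n+3$ (equivalent to $d<2$) forces $p\le m+1$, whence $r_p(m,n)=m+(p-1)n\le m+mn<m+n+mn$ as $n\ge1$. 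If $2\le d\le n$, then $r_p(m,n)=\lceil(n+1)p-1-d^2\rceil$, and since $m+n+mn$ is an integer the inequality $r_p(m,n)<m+n+mn$ is equivalent to $(n+1)p-1-d^2\le m+n+mn-1$, i.e.\ to $(n+1)(p-m-1)\le d^2-1$; substituting $p-m=2d-n$ this rearranges to $(d-n)(d-n-2)\ge0$, which holds because $d\le n$ (with equality only at $d=n$, i.e.\ $p=m+n$, where $r_p(m,n)=m+n+mn-1$). So $r_p(m,n)<m+n+mn$ in every case, and part~(i) applied with $r=m+n+mn$ yields the non-surjectivity.

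The only substantial input is Theorem~\ref{TA} itself, which is assumed here; given it, Theorem~\ref{TB} is essentially bookkeeping. The step that needs care is the semicontinuity for the dimension of invariants, namely that invariants can only grow modulo $p$: this rests on the purity of the integral invariant module, which in turn uses $\mZ$-freeness of $\End_\mZ(V_\mZ^{\otimes r})$ and of $\cO(GL(m|n))$. Everything else reduces to the elementary arithmetic with the formula of Theorem~\ref{TA} carried out above.
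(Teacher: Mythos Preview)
Your proof is correct and follows essentially the same route as the paper: the semicontinuity argument for part~(i) is exactly the content of the paper's Proposition~\ref{PropBound} and Lemma~\ref{StandLem}, and the contradiction is drawn in the same way. For part~(ii) the paper takes a marginally shorter path by using the alternative description $r_p(m,n)=\min_{1\le s\le n+1} t^s_p(m,n)$ with $t^{n+1}_p(m,n)=m+n+mn$, so that $r_p(m,n)=m+n+mn$ iff the minimum is attained at $s=n+1$, which by~\ref{RemMain} happens iff $p>m+n$; your direct case check against the three-branch formula of Theorem~\ref{TA} arrives at the same conclusion with only slightly more arithmetic.
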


It is an open question whether the necessary condition for surjectivity in Theorem~\ref{TB}(ii) is also sufficient. However, for $p=3$, we prove that it is (see Corollary~\ref{CNew}): 

\begin{MainTheorem}\label{TC}
For $p=3$, the morphisms $\Phi_{m|n}^{(r)}$
are surjective for all $r\in\mN$ if and only if $mn=0$ or $m=1=n$.
\end{MainTheorem}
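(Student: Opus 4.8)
The plan is to deduce this from Theorem~\ref{TB} together with an explicit verification of surjectivity in the one remaining case $m=n=1$ at $p=3$. By Theorem~\ref{TB}(ii), if $\Phi^{(r)}_{m|n}$ is surjective for all $r$, then $mn=0$ or $m+n<p=3$; the latter forces $m=n=1$ (since $m,n\geq 1$ and $m+n<3$ means $m=n=1$, noting $m+n=2<3$) — wait, $m+n<3$ with positive integers gives only $m=n=1$. So the necessary direction is immediate. For the converse, the case $mn=0$ is the classical result (cited via \cite{Berele, dCP}), so the entire content is to show that for $p=3$ and $m=n=1$, the map $\Phi^{(r)}_{1|1}\colon \k S_r\to \End_{GL(1|1)}(V_{1|1}^{\otimes r})$ is surjective for every $r$.

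First I would set up the target explicitly: $V=V_{1|1}$ is $2$-dimensional with a $1$-dimensional even and $1$-dimensional odd part, and $GL(1|1)$-equivariant endomorphisms of $V^{\otimes r}$ can be computed directly. The key structural input is that $r_3(1,1)$ is known from Theorem~\ref{TA}: with $m=n=1$, $p=3$ we get $d=(3+1-1)/2=3/2<2$, so $r_3(1,1)=m+(p-1)n=1+2=3$. Thus $\Phi^{(r)}_{1|1}$ is injective exactly for $r\leq 3$, and for $r\geq 4$ it has a nonzero kernel. To prove surjectivity for all $r$ I would compare dimensions: compute $\dim\End_{GL(1|1)}(V^{\otimes r})$ on one hand, and $\dim \k S_r - \dim\ker\Phi^{(r)}_{1|1}$ on the other, and show they agree. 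Since $\k S_r/\ker\Phi^{(r)}$ embeds in the endomorphism algebra, equality of dimensions gives surjectivity. I expect $\dim\End_{GL(1|1)}(V^{\otimes r})$ to be computable via weight-space/character arguments for the rank-one supergroup (the even part is $\mG_m\times\mG_m$ up to the odd directions), giving a clean closed form, while $\dim\ker\Phi^{(r)}_{1|1}$ should be controlled by the explicit generator(s) of the kernel — for $GL(1|1)$ the kernel is classically generated by a single primitive idempotent-type element coming from the vanishing of the relevant Schur functor, and in characteristic $3$ one must track how this propagates.

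The main obstacle, and the heart of the proof, will be the characteristic-$3$ bookkeeping of $\ker\Phi^{(r)}_{1|1}$: in characteristic $0$ the kernel of $\Phi^{(r)}_{1|1}$ is the two-sided ideal corresponding to partitions not fitting in a $(1,1)$-hook, and its dimension is classical, but in characteristic $3$ the Specht/cell module filtration is not semisimple and one cannot simply sum squares of dimensions of hook-shaped irreducibles. I would instead argue via the cellular structure of $\k S_r$: the image $\Phi^{(r)}_{1|1}(\k S_r)$ is a quotient cellular algebra, and I would identify precisely which cell modules survive (those indexed by partitions inside the $(1,1)$-hook, i.e.\ hooks $(a,1^b)$ with $a,b$ constrained) and show the cellular bilinear form has the expected rank. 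The alternative, likely cleaner route is a direct module-theoretic one: exhibit $V_{1|1}$ as a specialization making $\End_{GL(1|1)}(V^{\otimes r})$ a quotient of the walled Brauer / Temperley–Lieb-type algebra whose structure at $p=3$ is tractable, and check surjectivity of $\Phi$ onto it by hand for small $r$ and then by a stability argument. Either way, once $\dim\mathrm{im}\,\Phi^{(r)}_{1|1}=\dim\End_{GL(1|1)}(V^{\otimes r})$ is established for all $r$, Theorem~C follows by combining with Theorem~\ref{TB}(ii) as above.
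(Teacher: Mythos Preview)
Your overall architecture is exactly the paper's: the necessary direction is Theorem~\ref{TB}(ii), the case $mn=0$ is classical, and the sole remaining content is surjectivity of $\Phi^{(r)}_{1|1}$, which you correctly propose to establish by matching $\dim\mathrm{im}\,\Phi^{(r)}_{1|1}$ with $\dim\End_{GL(1|1)}(V^{\otimes r})$.

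Where you diverge from the paper is in the execution of both dimension counts, and your proposed routes are noticeably heavier than what is actually needed. For the target, the paper does not use weight-space combinatorics in the abstract; it uses the concrete fact that $V=L(1|0)$ is \emph{projective} as a $GL(1|1)$-module, so $V^{\otimes r}$ is projective and decomposes explicitly into the known indecomposable projectives of $GL(1|1)$ (two-dimensional simples $L(a|b)$ when $p\nmid a+b$, and length-four self-dual covers $P(a|b)$ otherwise). A short case split on whether $p\mid r$ and Vandermonde's identity give $\dim\End_{GL(1|1)}(V^{\otimes r})=\binom{2r-2}{r-1}$ uniformly for all $p\neq 2$. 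For the image, the paper avoids your kernel/cellular analysis entirely: it decomposes $V^{\otimes r}\simeq\bigoplus_{a+b=r}M^{a|b}$ as a $\k S_r$-module, works out the (at most length two) Specht filtrations of the $M^{a|b}$ by hook Specht modules, determines their socle layers by self-duality and induction, and then applies Lemma~\ref{LemDim}(i) to read off $\dim\mathrm{im}\,\Phi^{(r)}_{1|1}$. In fact the last step is a squeeze: Lemma~\ref{LemDim}(i) gives a lower bound for $\dim\mathrm{im}\,\Phi$ which already equals the target dimension, forcing equality. No cellular structure, no Temperley--Lieb, and no tracking of generators of the kernel is needed; your computation of $r_3(1,1)$ is also not used. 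Note finally that the paper's argument works for every $p\neq 2$, not only $p=3$, which your sketch does not attempt.
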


We further give some estimates and examples for when the morphisms $\Phi^{(r)}_{m|n}$ are or are not surjective. For example, for $GL(1|1)$ the morphisms are proved to be always surjective, see Theorem~\ref{ThmSurj}(iii). On the other hand for $p=3$ we have that the morphisms $\Phi^{(2+3i)}_{2|1}$ 
are not surjective for all $i\in\mZ_{>0}$, see Theorem~\ref{Thm21}.

The above lack of surjectivity implies that to study invariant theory for the general linear supergroup in positive characteristic, one must look beyond the symmetric group. Our final result demonstrates how a a change of lattice in the integral symmetric group algebra has a promise to augment \eqref{EqPhi} to a surjective morphism. 
To be more precise, working over the integers, we define an intermediate $\mZ$-subalgebra $\Sigma_{m|n,\mZ}$ such that 
$$\im\, \Phi_{m|n,\mZ}^{(r)}\subseteq \Sigma_{m|n,\mZ}^{(r)}\subseteq \End(V^{\otimes r}_{m|n,\mZ}).$$
Informally speaking, the intermediate algebra $\Sigma_{m|n,\mZ}$ is obtained from 
$\im\, \Phi_{m|n,\mZ}^{(r)}$ by `inverting~$p$ where necessary', see Definition~\ref{DSigma} and the example in Theorem~\ref{TE}(iii) below. 
Then we define  $\Sigma_{m|n}^{(r)}:=\k\otimes\Sigma_{m|n,\mZ}^{(r)}$ and prove (see Proposition~\ref{PropSigma}):

\begin{MainTheorem}\label{TD}
Let $p>2$. Then we have:
\begin{enumerate}
\item[{\rm (i)}]  
$\dim_{\k}\Sigma_{m|n}^{(r)}\;=\; \dim_{\mC}\End_{GL(V_{m|n,\mC})}(V_{m|n,\mC}^{\otimes r})$.
\item[{\rm (ii)}]  There is a commutative diagram of algebra morphisms
$$\xymatrix{
\Sigma_{m|n}^{(r)}\ar@{^{(}->}[rr]^-{\widetilde{\Phi}^{(r)}_{m|n}}&& \End_{GL(m|n)}(V_{m|n}^{\otimes r})\\
\k S_r\ar[u]\ar[urr]_-{\Phi^{(r)}_{m|n}}}$$
where $\widetilde{\Phi}^{(r)}_{m|n}$ is injective.
\item[{\rm (iii)}]  The morphism $\widetilde{\Phi}^{(r)}_{m|n}$ is an isomorphism if and only if 
$$\dim_{\k}\End_{GL(m|n)}(V_{m|n}^{\otimes r})=\dim_{\mC}\End_{GL(m|n)}(V_{m|n,\mC}^{\otimes r}).$$
\end{enumerate}
\end{MainTheorem}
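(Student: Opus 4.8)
The plan is to treat the three statements in sequence, since (ii) and (iii) follow rather formally once (i) is in place. First I would unwind Definition~\ref{DSigma}: the algebra $\Sigma_{m|n,\mZ}^{(r)}$ is obtained from $\im\,\Phi_{m|n,\mZ}^{(r)}$ by ``inverting $p$ where necessary,'' which I expect to mean the following concretely. Inside the free $\mZ$-module $\End(V^{\otimes r}_{m|n,\mZ})$ one has the sublattice $M := \im\,\Phi_{m|n,\mZ}^{(r)}$, and $\Sigma_{m|n,\mZ}^{(r)}$ should be its ``saturation'' or ``purification'': the set of $x \in \End(V^{\otimes r}_{m|n,\mZ})$ such that $Nx \in M$ for some integer $N$ coprime to... no, rather some power of $p$ times $x$ lies in $M$ — i.e. the preimage of the $p$-torsion-free-ification, equivalently $\Sigma_{m|n,\mZ}^{(r)} = \mZ_{(p)}$-saturation intersected back with the integral endomorphisms after clearing other primes. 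The key structural input is that over $\mQ$ (indeed over $\mC$) the map $\Phi^{(r)}_{m|n}$ is surjective (this is the classical First Fundamental Theorem in characteristic zero, cited in the introduction via \cite{Berele}), so $\mQ \otimes M = \End_{GL(V_{m|n,\mQ})}(V_{m|n,\mQ}^{\otimes r})$, and hence $\mQ \otimes \Sigma_{m|n,\mZ}^{(r)}$ has the same $\mQ$-dimension, namely $\dim_{\mC}\End_{GL(V_{m|n,\mC})}(V_{m|n,\mC}^{\otimes r})$.

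For part (i) the point is then that $\Sigma_{m|n,\mZ}^{(r)}$, being a \emph{saturated} (pure) sublattice of the free $\mZ$-module $\End(V^{\otimes r}_{m|n,\mZ})$, is itself a free $\mZ$-module and its reduction mod $p$ embeds: $\dim_{\k}\k\otimes\Sigma_{m|n,\mZ}^{(r)} = \operatorname{rank}_{\mZ}\Sigma_{m|n,\mZ}^{(r)} = \dim_{\mQ}\mQ\otimes\Sigma_{m|n,\mZ}^{(r)}$. Combining with the previous paragraph gives $\dim_{\k}\Sigma_{m|n}^{(r)} = \dim_{\mC}\End_{GL(V_{m|n,\mC})}(V_{m|n,\mC}^{\otimes r})$, which is (i). The only thing to check carefully here is that the definition of $\Sigma_{m|n,\mZ}$ does produce a saturated lattice with respect to $p$ (and that reduction mod $p$ of a saturated lattice in a free module is injective, which is standard: if $L \subseteq F$ is pure then $F/L$ is torsion-free hence $\operatorname{Tor}_1^{\mZ}(F/L,\k)=0$).

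For part (ii), the map $\widetilde\Phi^{(r)}_{m|n}$ is defined as $\k \otimes$ of the inclusion $\Sigma_{m|n,\mZ}^{(r)} \hookrightarrow \End(V^{\otimes r}_{m|n,\mZ})$ followed by the natural map $\k \otimes \End(V^{\otimes r}_{m|n,\mZ}) \to \End_{GL(m|n)}(V_{m|n}^{\otimes r})$; I would first argue its image lands in the $GL(m|n)$-invariants (which holds because it already does integrally, the invariance conditions being defined by $\mZ$-linear equations stable under base change), then argue injectivity. Injectivity will follow because $\Sigma_{m|n,\mZ}^{(r)}$ is saturated in $\End(V^{\otimes r}_{m|n,\mZ})$ and the latter reduces mod $p$ compatibly: an element of $\k \otimes \Sigma_{m|n,\mZ}^{(r)}$ mapping to zero in $\End(V_{m|n}^{\otimes r})$ lifts to an element of $\Sigma_{m|n,\mZ}^{(r)}$ lying in $p\cdot\End(V^{\otimes r}_{m|n,\mZ})$, hence by saturation already in $p\cdot\Sigma_{m|n,\mZ}^{(r)}$, so it was zero. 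The commutativity of the triangle with $\Phi^{(r)}_{m|n}$ is immediate from $\im\,\Phi^{(r)}_{m|n,\mZ}\subseteq\Sigma_{m|n,\mZ}^{(r)}$ by applying $\k\otimes-$. Finally part (iii): by (ii) the injective map $\widetilde\Phi^{(r)}_{m|n}$ is an isomorphism precisely when source and target have equal dimension, and by (i) the source has dimension $\dim_{\mC}\End_{GL(V_{m|n,\mC})}(V_{m|n,\mC}^{\otimes r})$; rewriting the target's dimension as $\dim_{\k}\End_{GL(m|n)}(V_{m|n}^{\otimes r})$ gives exactly the stated criterion. \textbf{Main obstacle.} The genuinely delicate point is controlling the reduction $\k\otimes\Sigma_{m|n,\mZ}^{(r)}$ — i.e. verifying that the construction in Definition~\ref{DSigma} really yields a $p$-saturated lattice and commutes with base change to $\k$ in the way needed. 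Everything else is formal homological algebra over $\mZ$ once that is pinned down.
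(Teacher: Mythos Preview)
Your proposal is correct and follows essentially the same approach as the paper. The paper packages your ``$p$-saturation'' observation as the construction $I_p(f) := \im(\mZ[1/p]\otimes f)\cap B$ (which is exactly the set of $x\in B$ with $p^k x\in\im f$ for some $k$, as you surmise) and records the needed properties---rank equals characteristic-zero image dimension, and injectivity of $\k\otimes I_p(f)\to\k\otimes B$---in a short preliminary lemma; from there parts (i)--(iii) are deduced in one line each, just as you outline. Your ``main obstacle'' dissolves once the definition is made explicit: $I_p(f)/p\hookrightarrow B/p$ is immediate because $B/I_p(f)$ has no $p$-torsion by construction.
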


\vspace{3mm}
This suggests a potential approach to super invariant theory in positive characteristic:
\begin{enumerate}
\item Show that $\End_{GL(m|n)}(V_{m|n}^{\otimes r})$ does not depend on the characteristic $p$.
\item Determine explicitly the lattice $\Sigma_{m|n,\mZ}^{(r)}.$
\end{enumerate}

We demonstrate how this strategy works in the first case where the surjectivity of $\Phi^{(r)}_{m|n}$ fails (see Theorems~\ref{ThmSigma},\ref{Thm21}):

\begin{MainTheorem}\label{TE}
Let $p=3$, $r=5$, $m=2$ and $n=1$. 
\begin{enumerate}
\item[{\rm (i)}]  We have\, $\dim_\k \End_{GL(2|1)}(V_{2|1}^{\otimes 5})=120$ and\, $\dim\im \Phi^{(5)}_{2|1}=119$.
\item[{\rm (ii)}]  The morphism $\widetilde{\Phi}_{2|1}^{(5)}$ in Theorem~\ref{TD}(ii) is an isomorphism.
\item[{\rm (iii)}]  We have that $\Sigma^{(5)}_{2|1,\mZ}$ is the subring of $\mZ[\frac{1}{3}]S_5$ generated by $S_5$ and $\frac{1}{3}\sum_{\sigma\in S_5}\operatorname{sign}(\sigma)\sigma$.
%, and\, $\Sigma^{(5)}_{2|1}\simeq \End_{GL(2|1)}(V_{2|1}^{\otimes 5}).$
\end{enumerate}
\end{MainTheorem}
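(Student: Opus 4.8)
The plan is to treat the three parts as increasingly explicit refinements of the same computation for $GL(2|1)$ acting on $V_{2|1}^{\otimes 5}$ in characteristic $3$. For part (i), the first step is to compute $\dim_\k\End_{GL(2|1)}(V_{2|1}^{\otimes 5})$. The cleanest route is to decompose $V_{2|1}^{\otimes 5}$ as a module over the Schur superalgebra (equivalently, over the distribution algebra of $GL(2|1)$) and count the endomorphisms; since $5<p^2=9$, the tilting/indecomposable module combinatorics for $GL(2|1)$ in characteristic $3$ are still manageable, and one can cross-check the count $120$ against the characteristic-zero dimension $\dim_\mC\End(V_{2|1,\mC}^{\otimes 5})$, which is the number of pairs of paths in the appropriate weight lattice (a Bratteli-diagram count for the walled/super Brauer-type algebra, or equivalently $\sum_\lambda (m^\lambda_{2|1})^2$ over $(2|1)$-hook partitions $\lambda\vdash 5$ with multiplicities). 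The value $\dim\im\Phi^{(5)}_{2|1}=119$ is then obtained from Theorem~\ref{TA}: one has $p=3$, $m=2$, $n=1$, so $d=(3+1-2)/2=1<2$, giving $r_3(2,1)=m+(p-1)n=2+2=4$; hence $\Phi^{(5)}_{2|1}$ is \emph{not} injective, so $\dim\im\Phi^{(5)}_{2|1}<\dim_\k\k S_5=120$, and it remains to show the kernel is exactly one-dimensional. The kernel is spanned by a single explicit element: by the Second Fundamental Theorem package in \cite{CEO} (or by direct analysis of the antisymmetrizer on $(2|1)$-superspace), the first relation to appear is the appropriate super-analogue of the Young antisymmetrizer $e_{m+n+1}=\sum_{\sigma\in S_{m+n+1}}\operatorname{sign}(\sigma)\sigma$ living inside $\k S_5$ via the inclusion $S_4\hookrightarrow S_5$ (or its sum over cosets); one checks this element lies in $\ker\Phi^{(5)}_{2|1}$, and a dimension/semisimplicity-defect argument bounds $\dim\ker\le 1$. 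Thus $\dim\im=119$.

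For part (ii), by Theorem~\ref{TD}(iii) the morphism $\widetilde\Phi^{(5)}_{2|1}$ is an isomorphism precisely when $\dim_\k\End_{GL(2|1)}(V_{2|1}^{\otimes 5})=\dim_\mC\End_{GL(2|1)}(V_{2|1,\mC}^{\otimes 5})$. So the task reduces to verifying that the characteristic-$3$ dimension $120$ computed in part (i) coincides with the characteristic-zero value. In characteristic zero $\Phi^{(5)}_{2|1,\mC}$ is surjective (Berele--Regev type FFT) with a known kernel, and the endomorphism dimension is computed by the same Bratteli-diagram count; one confirms it equals $120$. (This is the ``no characteristic jump'' condition from the strategy outlined before the theorem, verified by hand in this small case.) Given part (i), this is essentially a bookkeeping check.

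For part (iii), I would unwind Definition~\ref{DSigma} of $\Sigma_{m|n,\mZ}^{(r)}$ explicitly. Working over $\mZ$, $\operatorname{im}\Phi^{(5)}_{2|1,\mZ}\subseteq\End(V^{\otimes 5}_{2|1,\mZ})$ is a $\mZ$-lattice whose rank is $119$ after base change to any field where $\Phi$ is injective and $120$ over $\mC$ — wait, more carefully: $\operatorname{im}\Phi_{\mZ}$ is the image of $\mZ S_5$, which has rank $119$ since $\Phi_\mC$ has a one-dimensional kernel spanned by $e_5=\sum_{\sigma\in S_5}\operatorname{sign}(\sigma)\sigma$ (the full antisymmetrizer, which kills $V_{2|1}^{\otimes 5}$ because $2+1<5$... actually here $m+n+1=4\le 5$ so $e_5$ vanishes on the superspace over any ring). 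The prescription ``invert $p$ where necessary'' then amounts to saturating the lattice at the prime $3$: since $e_5\in\ker$ but $\tfrac13 e_5$ acts (via its complement, or rather the relation forces $e_5=3\cdot(\text{something integral})$ in $\operatorname{im}\Phi_\mZ$), one adjoins $\tfrac13 e_5$. I would: (a) show $e_5$ annihilates $V^{\otimes 5}_{2|1,\mZ}$, so $e_5\in\ker\Phi_\mZ$; (b) show that $3$ is the exact denominator needed, i.e. $\operatorname{im}\Phi_\mZ$ together with $\tfrac13 e_5$ already has full rank $120$ and is $3$-saturated, while no smaller denominator suffices; (c) conclude $\Sigma^{(5)}_{2|1,\mZ}=\mZ[\tfrac13]\langle S_5,\ \tfrac13\sum_\sigma\operatorname{sign}(\sigma)\sigma\rangle$ and verify this $\mZ[\tfrac13]$-order is closed under multiplication (the product of $\tfrac13 e_5$ with any $\sigma\in S_5$ is $\pm\tfrac13 e_5$, and $(\tfrac13 e_5)^2=\tfrac{5!}{9}e_5=\tfrac{40}{3}e_5\cdot 3=\dots$, which lies in the span — a short explicit check).

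The main obstacle is part (i), specifically the exact computation $\dim_\k\End_{GL(2|1)}(V_{2|1}^{\otimes 5})=120$ in characteristic $3$: unlike the classical or characteristic-zero cases there is no clean closed formula, so one must genuinely work out the indecomposable summands of $V_{2|1}^{\otimes 5}$ as a $GL(2|1)$-module mod $3$ (or equivalently the structure of the relevant Schur superalgebra block), identify which Hom-spaces pick up extra dimensions from non-semisimplicity, and confirm the total matches. Everything else — the injectivity threshold from Theorem~\ref{TA}, the reduction in (ii) to Theorem~\ref{TD}(iii), and the lattice identification in (iii) — is then either immediate or a finite explicit verification.
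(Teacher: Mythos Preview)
Your overall architecture for (ii) is fine, and your instinct that everything hinges on the computation in (i) is right. But there is a genuine error running through parts (i) and (iii) which would make the argument collapse.

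The error is the claim that the full antisymmetriser $e_5=\sum_{\sigma\in S_5}\operatorname{sign}(\sigma)\sigma$ vanishes on $V_{2|1}^{\otimes 5}$ over $\mZ$ (and over $\mC$), justified by ``$m+n+1\le 5$''. That criterion is the classical one for ordinary vector spaces; it is false for superspaces. For $V=V_{2|1}$ one has $\Lambda^rV\simeq\bigoplus_{a+b=r}\Lambda^aV_{\bar 0}\otimes S^bV_{\bar 1}$, which for $r\ge 2$ is $4$-dimensional, so $e_5$ does \emph{not} act by zero in characteristic~$0$. In fact $r_0(2,1)=m+n+mn=5$, so $\Phi^{(5)}_{2|1,\mC}$ is an isomorphism and $\Phi^{(5)}_{2|1,\mZ}$ is injective; the integral image has rank $120$, not $119$. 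Your entire saturation picture in (iii) (``$\operatorname{im}\Phi_\mZ$ has rank $119$, adjoin $\tfrac13 e_5$ to reach rank $120$'') is therefore built on a false premise.

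The correct mechanism is the one from the proof of Lemma~\ref{LUpper}: over $\k$ of characteristic $3$, every standard basis tensor in $V_{2|1}^{\otimes 5}$ either repeats an even basis vector (killed outright by antisymmetrisation) or contains three copies of the single odd basis vector (producing a factor $3!$). Hence $e_5\in\ker\Phi^{(5)}_{2|1}$ over $\k$, while over $\mZ$ the image of $e_5$ is merely \emph{divisible by $3$}. One then checks it is not divisible by $9$; that is precisely the datum that pins down $\Sigma^{(5)}_{2|1,\mZ}$ inside $\mZ[\tfrac13]S_5$ as generated by $S_5$ and $\tfrac13 e_5$. Relatedly, in part (i) you propose $e_{m+n+1}=e_4$ embedded via $S_4\hookrightarrow S_5$ as the kernel element; but $r_3(2,1)=4$, so $\Phi^{(4)}_{2|1}$ is injective and $e_4$ cannot act as zero. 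The kernel element is $e_5$ itself, for the mod-$3$ reason above.

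Finally, for $\dim\operatorname{im}\Phi^{(5)}_{2|1}=119$ you still need the upper bound $\dim\ker\le 1$, and ``a dimension/semisimplicity-defect argument'' is not an argument. The paper takes a different route here: rather than bounding the kernel, it computes $\dim\operatorname{im}$ directly via Lemma~\ref{LemDim}(i), by determining which indecomposable projective $\k S_5$-modules occur as summands of $V^{\otimes 5}$ (all except $P^{(3,2)}$), working out the relevant socle filtrations, and summing. Your proposed shortcut would need a concrete reason why no second independent element of $\k S_5$ acts as zero.
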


The paper is organised as follows. In Section~\ref{SecPrel} we introduce some necessary background. In Section~\ref{SecInj} we determine completely when $\Phi$ is injective. In Section~\ref{SecSurj} we study when $\Phi$ is surjective, in particular establishing for which $m,n$ the morphisms~$\Phi$ are surjective for all $r$, when $p=3$. In Section~\ref{SecEx} we focus on the example $GL(2|1)$. In Section~\ref{SecFurther} we give a potential approach to a corrected `First Fundamental Theorem of Invariant Theory' for the general linear supergroup in positive characteristic.

%%%%%%%%%%%%%%%%%%%%%%%%%%%%%%%%%%%%%%%%%%%%%%%%%%%%%%%%%%%%%%%%%%%%%%%%%%%%%%%%%%%%%%%%%%%%%%%%%%%%%%%%%%%%

\section{Preliminaries}\label{SecPrel}
We set $\mN=\{0,1,2,\cdots\}$.
\subsection{Faithful algebra modules}

Let $A$ be a finite dimensional algebra over $\k$. Label the isomorphism classes of simple $A$-modules by $L(i)$ and their projective covers by $P(i)$.

\begin{lemma}\label{LemDim}
Let $M$ be a finite dimensional $A$-module. 
\begin{enumerate}
\item[{\rm (i)}]  Assume that $\k$ is a splitting field for $A$.
 For each $i$, denote by $K(i)\subset P(i)$ the intersection of the kernel of all $A$-module morphisms $P(i)\to M$. Then the dimension of the image of\, $A\to \End_\k(M)$ is given by
$$\sum_i \dim_\k L(i) \dim_\k (P(i)/K(i)).$$
\item[{\rm (ii)}]  Assume that $A$ is self-injective, for instance $A=\k G$ for a finite group $G$. Then $M$ is faithful if and only if it contains every indecomposable projective as a direct summand. 
\end{enumerate}
\end{lemma}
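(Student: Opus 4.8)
The plan is to treat the two parts separately, both via standard finite-dimensional algebra arguments. For part~(i), the image of $A\to\End_\k(M)$ is, as a left $A$-module, a quotient of $A$ together with its natural embedding into $\End_\k(M)\cong M^{\oplus\dim_\k M}$ as $A$-modules; more precisely, the image is the largest quotient of ${}_AA$ that embeds into a direct sum of copies of $M$. First I would decompose ${}_AA=\bigoplus_i P(i)^{\oplus\dim_\k L(i)}$ (using that $\k$ is a splitting field, so the multiplicity of $P(i)$ in the regular module is $\dim_\k L(i)$). For each $i$, consider the evaluation map $P(i)\to M^{\oplus\Hom_A(P(i),M)}$ sending $x\mapsto (f(x))_f$; its kernel is exactly $K(i)$, and $P(i)/K(i)$ is the largest quotient of $P(i)$ that embeds into a power of $M$. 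Then I would argue that the image of $A$ in $\End_\k(M)$, as an $A$-module, is $\bigoplus_i (P(i)/K(i))^{\oplus\dim_\k L(i)}$: the map ${}_AA\to\End_\k(M)$ sends $1\in A$ to $\Id_M$, so its image is the cyclic submodule generated by $\Id_M$, which is a quotient of ${}_AA$; on the $P(i)$-isotypic part this quotient kills precisely $K(i)$ because any further relation would force a morphism $P(i)/(\text{something})\hookrightarrow M^{\oplus N}$ to degenerate. Taking $\k$-dimensions gives the stated formula $\sum_i\dim_\k L(i)\cdot\dim_\k(P(i)/K(i))$.

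For part~(ii), I would use that $M$ is faithful if and only if ${}_AA$ embeds into a direct sum of copies of $M$ (this is elementary: faithfulness says the map $A\to\End_\k(M)\cong M^{\oplus\dim_\k M}$ is injective). Now $A$ is self-injective, hence the indecomposable projectives $P(i)$ are also the indecomposable injectives, and every module ${}_AA$ has a decomposition into the $P(i)$ with each appearing with positive multiplicity. If $M$ contains every $P(i)$ as a summand, then ${}_AA$ visibly embeds into a power of $M$, so $M$ is faithful. Conversely, if ${}_AA\hookrightarrow M^{\oplus N}$, then since each $P(i)$ is injective it is a direct summand of $M^{\oplus N}$ as soon as it is a submodule; as $P(i)$ is indecomposable and a summand of $M^{\oplus N}=\bigoplus M$, by Krull--Schmidt $P(i)$ is a summand of some copy of $M$, hence of $M$. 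Thus $M$ contains every indecomposable projective as a summand. The special case $A=\k G$ is then immediate since group algebras are symmetric, hence self-injective.

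The one point requiring a little care — and the closest thing to an obstacle — is the precise identification in part~(i) of the image as an $A$-module with $\bigoplus_i (P(i)/K(i))^{\oplus\dim_\k L(i)}$, rather than merely bounding its dimension above and below. The upper bound (the image is a quotient of ${}_AA$, so killing at least $\bigoplus K(i)^{\oplus\dim L(i)}$ since those elements act as $0$ on $M$) and the lower bound (the image surjects onto each $P(i)/K(i)$ with the right multiplicity, by composing with projections and the defining property of $K(i)$) must be shown to match; this uses that the socle structure forces no additional collapsing, which again relies on $\k$ being a splitting field so that the multiplicity count in ${}_AA$ is exactly $\dim_\k L(i)$. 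Everything else is routine Krull--Schmidt and self-injectivity bookkeeping.
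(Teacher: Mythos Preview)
Your proposal is correct and follows essentially the same route as the paper: decompose ${}_AA$ via primitive idempotents, identify the kernel of each $Ae\to\End_\k(M)$ with $K(i)$ by recognising the composite $Ae\to\End_\k(M)\to\Hom_\k(eM,M)\simeq\Hom_\k(\Hom_A(Ae,M),M)$ as the evaluation map, and for part~(ii) use injectivity of ${}_AA$ together with Krull--Schmidt exactly as you describe. Your concern in the final paragraph about matching upper and lower bounds is unnecessary: the kernel of $A\to\End_\k(M)$ restricted to each summand $Ae\simeq P(i)$ is \emph{equal} to $K(i)$ directly (the components of $A\to M^{\oplus\dim M}$ exhaust all of $\Hom_A(A,M)$, hence on $Ae$ all of $\Hom_A(Ae,M)$), so no separate bounding argument is needed.
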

\begin{proof}
For part (i), consider a decomposition of $1_A$ into orthogonal primitive (hence local) idempotents. It suffices to know the dimension of the image (so the codimension of the kernel) of $A\to\End_\k(M)$ restricted to $Ae\subset A$ for each idempotent $e$ in the decomposition.

 Fix such an idempotent $e$. It corresponds to some simple $L(i)$, and there are $\dim_\k L(i)$ idempotents in the decomposition corresponding to $L(i)$. The kernel of the first map below is clearly equal to that of the composite
 $$Ae\to\End_\k(M)\to\Hom_\k(eM,M)\simeq \Hom_\k(\Hom_A(Ae,M),M).$$
 This composite is just the evaluation homomorphism, so its kernel is the intersection of the kernels of all $A$-linear homomorphisms $Ae\to M$. The conclusion follows from the isomorphism $Ae\simeq P(i)$.
 
For part (ii), we start by observing that the algebra morphism $A\to\End_\k(M)$ can be interpreted as an $A$-module morphism $A\to M^{\dim M}$. If $M$ is faithful and the regular $A$-module injective, it follows that $A$ is a direct summand of $M^{\dim M}$. That $M$ contains every indecomposable projective module then follows from the Krull-Schmidt theorem. If $\k$ is a splitting field, the other direction follows from part (i). In general, we can observe that if $M$ contains every indecomposable projective, then $M^n$ contains $A$ for some $n\in\mN$, so clearly $M^n$ is faithful. But $A\to\End_{\k}(M^n)$ takes values in $\End_{\k}(M)^n$ so also $M$ is faithful.
\end{proof}

\subsection{Symmetric group}

For $r\in\mZ_{>0}$, we denote the symmetric group on $\{1,2,\cdots,r\}$ by~$S_r$. 

\subsubsection{}\label{part}

We give an index for notation we will use relating to compositions:
\begin{eqnarray*}
\Lambda(k)&& \mbox{Compositions of (maximal) length $k$}\\
\Lambda^+(k)&& \mbox{Partitions of (maximal) length $k$}\\
\Lambda(k,r)&& \mbox{Compositions of (maximal) length $k$ and of size $r$}\\
\Lambda^+(k,r)&& \mbox{Partitions of (maximal) length $k$ and of size $r$}\\
\Lambda(k|l)=\Lambda(k)\times \Lambda(l)&& \mbox{Bi-compositions of (maximal) lengths $k,l$}\\
\Lambda^+(k|l)=\Lambda^+(k)\times \Lambda^+(l)&& \mbox{Bi-partitions of (maximal) lengths $k,l$}\\
\Lambda(k|l,r)&& \mbox{Bi-compositions of (maximal) lengths $k,l$ and total size $r$}\\
\Lambda^+_p(k|l,r)&& \mbox{Elements $(\alpha,\beta)\in \Lambda^+(k|l)$ with $|\alpha|+p|\beta|=r$ }
\end{eqnarray*}
Here, given $\lambda=(\lambda_1,\dots,\lambda_k)\in\Lambda(k)$, we write $|\lambda|:=\lambda_1+\dots+\lambda_k$.

\subsubsection{}We denote the Specht module over $\k S_r$ corresponding to $\lambda\vdash r$ by $S^\lambda$. When we need to refer to the
Specht module over $\mC$ we will use the notation $S^\lambda_\mC$.

If $\mathrm{char}(\k)=p>0$, we denote the unique simple quotient of $S^\lambda$ by $D^\lambda$ for $p$-regular $\lambda$. These are non-isomorphic and exhaust all simple modules. We denote the projective cover of $D^\lambda$ by $P^\lambda$.

\subsubsection{} Let $(\lambda,\mu)\in \Lambda(k|l,r)$. Then we have the standard parabolic subgroup
$$
S_{\lambda}\times S_{\mu}:=(S_{\lambda_1}\times\dots\times S_{\lambda_k})\times(S_{\mu_1}\times\dots\times S_{\mu_l})\leq S_r.
$$
Let $\k_{\lambda|\mu}$ be the $1$-dimensional $\k(S_{\lambda}\times S_{\mu})$-module with $(g,h)\in S_{\lambda}\times S_{\mu}$ acting with $\operatorname{sign}(h)$. Define the sign-permutation module
$$
M^{\lambda|\mu}:=\operatorname{Ind}_{S_{\lambda}\times S_{\mu}}^{S_r}\k_{\lambda|\mu}.
$$

\subsubsection{Duality}
For any finite dimensional representation $M$ of a group, we have the canonical representation structure on $M^\ast=\Hom_{\k}(M,\k).$ We have
$$(M^{\lambda|\mu})^\ast\,\simeq\, M^{\lambda|\mu},$$
and also the simple modules are self-dual, by \cite[Theorem~11.5]{JArc}.

\subsection{The general linear supergroup} In this section we assume $p\not=2$.

\subsubsection{}For any super vector space $V$ over $\k$, we have the associated affine group superscheme $GL(V)$, with associated Lie superalgebra $\mathfrak{gl}(V)$. The latter is the space of all $\k$-linear morphisms $V\to V$ equipped with super commutator.

\subsubsection{}\label{basis}We consider the superspace $V_{m|n}=\k^{m|n}$.
When $\k$ is clear from context, we also write $GL(m|n):=GL(V_{m|n})$ and $\mathfrak{gl}(m|n):=\mathfrak{gl}(V_{m|n})$.

Let $\{\sv_1,\dots,\sv_m\}$ be the standard basis of $(V_{m|n})_\0$ and $\{\sv_1',\dots,\sv_n'\}$ be the standard basis of $(V_{m|n})_\1$. We have a corresponding basis 
$$\{E_{i,j},E_{a',b'},E_{a',j},E_{i,b'}\mid 1\leq i,j\leq m, 1\le a,b \le n\}$$
of $\mathfrak{gl}(V_{m|n})$.
For example $E_{a',j}$ is defined via $E_{a',j}\sv_j=\sv_a'$, while all other standard basis elements are being sent to zero.

\subsubsection{}\label{weights} We fix the basis $\{\varepsilon_i,\varepsilon'_j\,|\, 1\le i\le m, 1\le j\le n\}$ for $\mathfrak{h}^\ast$, with $\mathfrak{h}\subset\mathfrak{gl}(m|n)$ the standard Cartan subalgebra of diagronal matrices, as follows. For all $1\le i\le m, 1\le j\le n$
$$\varepsilon_i(E_{j,j})=\delta_{ij}\quad\mbox{and}\quad \varepsilon_i(E_{j',j'})=0$$
$$\varepsilon'_i(E_{j,j})=0\quad\mbox{and}\quad \varepsilon'_i(E_{j',j'})=\delta_{ij}.$$

We use the same symbols for the corresponding basis of the character group $X(T)\simeq \mZ^{m+n}$ of the maximal torus $T$ of diagonal matrices in $GL(m)\times GL(n)$.
With this fixed basis we can interpret elements of $\Lambda(m|n)$, or more generally of $\mZ^{m+n}$ as characters (weights). More explicitly, we set
$$\Lambda(m|n)\hookrightarrow X(T),\quad (\alpha,\beta)\mapsto (\alpha|\beta):=\sum_i \alpha_i\varepsilon_i+\sum_j\beta_j\varepsilon'_j.$$

\subsection{Tensor superspace}

\subsubsection{}One of the many equivalent formulations of invariant theory for the general linear supergroup asks for a description of the algebra of $GL(m|n)$-invariant endomorphisms of tensor superspace:
$$\End_{GL(m|n)}(V_{m|n}^{\otimes r});$$
typically in terms of the symmetric group.

\subsubsection{}

For a positive integer $r$, there is a structure of a left $\k S_r$-module on the tensor superspace $V_{m|n}^{\otimes r}$ such that for all homogeneous $v_1,\dots,v_r\in V_{m|n}$ and $\sigma\in S_d$, we have 
\begin{equation*}\label{ESiAct*}
	\sigma(v_1\otimes\dots\otimes v_{r})=
	(-1)^{\langle\sigma;v_1,\dots,v_r\rangle} v_{\sigma^{-1}1}\otimes\dots\otimes v_{\sigma^{-1} r},
\end{equation*}
where 
\begin{equation*}\label{EAngleSi}
	\langle\sigma;v_1,\dots,v_r\rangle:=\sharp\{1\leq k<l\leq r\mid  \sigma k>\sigma l\ \text{and}\ v_k,v_l\ \text{are odd}\}.
\end{equation*}
This defines the algebra morphism
\begin{equation}\label{EqPhi0}
\k S_r\;\to\;\End_{\k}(V_{m|n}^{\otimes r}),
\end{equation}
where the right-hand side consists of all homogeneous endomorphisms of the superspace in the argument. By definition of $GL(m|n)$, this morphism actually takes values in $GL(m|n)$-endomorphisms, leading to~\eqref{EqPhi}.

We will occasionally also use the complex and integral versions $V_{m|n,\mC}=\mC^{m|n}$ and $V_{m|n,\mZ}=\mZ^{m|n}$, so that the $\k S_r$-module  $V_{m|n}^{\otimes r}$ is reduction modulo $p$ of $V_{m|n,\mC}^{\otimes r}$ using the lattice $V_{m|n,\mZ}^{\otimes r}$.

%%%%%%%%%%%%%%%%%%%%%%%%%%%%%%%%%%%%%%%%%%%%%%%%%%%%%%%%%%%%%%%%%%%%%%%%%%%%%%%%%%%%%%%%%%%%%%%%%%%%%%%%%%%%

\section{Injectivity}\label{SecInj}
In this section we assume that $p\not=2$ unless stated otherwise.

\subsection{Main result and examples}
\label{SSTS}

It is clear that there exists a unique positive integer $r_p(m,n)$ such that the $\k S_r$-module $V_{m|n}^{\otimes r}$ is faithful if and only if $r\leq r_p(m,n)$. In this section we determine this value. First, we give an overview of what is known.

\begin{remark}\label{RP}

\begin{enumerate}[label=(\roman*)]
\item In characteristic zero, we have $r_0(m,n)=m+n+mn$. Indeed, by \cite[3.20]{Berele}, the $\mC S_r$-module $V_{m|n,\mC}^{\otimes r}$ contains all irreducible $\mC S_r$-modules as summands if and only if $r\leq m+n+mn$. Now, we can apply Lemma~\ref{LemDim}(ii). 
\item We have $r_p(m,n)=r_p(n,m)$. This is follows from the fact that the $\k S_r$-module $V_{m|n}^{\otimes r}$ is obtained from the $\k S_r$-module $V_{n|m}^{\otimes r}$ by tensoring with the sign representation. 
\item We have $r_p(m,0)=m$. This is well-known. For example by  Lemma~\ref{LUpper} below, we have $r_p(m,0)\leq m$, and for the opposite inequality it suffices to note that the regular representation is a summand of $V_{m|0}^{\otimes m}$. 
\item The question makes sense for $p=2$ in which case, by (iii), we find $r_2(m,n)=m+n$.
\item In \cite[Example~4.3]{CEO}, the following lemma was observed, which demonstrates dependence of $r_p(m,n)$ on $p$, contrary to the value $r_p(m,0)$. 
\end{enumerate}

\end{remark}

\begin{lemma}\label{LUpper}
If $p>2$, then $r_p(m,n)\leq m+n+\min(mn,(p-2)m,(p-2)n).$
\end{lemma}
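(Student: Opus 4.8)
The plan is to prove the three inequalities $r_p(m,n)\le m+n+mn$, $r_p(m,n)\le m+n+(p-2)n$ and $r_p(m,n)\le m+n+(p-2)m$ separately. By Remark~\ref{RP}(ii) the symmetry $r_p(m,n)=r_p(n,m)$ reduces the last two to a single statement: it is enough to prove $r_p(m,n)\le m+n+(p-2)n$ for \emph{all} $m,n$, and then apply this to the pair $(n,m)$ to get the bound with $(p-2)m$.

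For $r_p(m,n)\le m+n+mn$ I would reduce modulo $p$ from the known characteristic-zero case. Let $r>m+n+mn$. By Remark~\ref{RP}(i) the $\mC S_r$-module $V_{m|n,\mC}^{\otimes r}$ is not faithful, so $\dim_\mC\im\Phi^{(r)}_{m|n,\mC}<r!$. The image $I:=\im\Phi^{(r)}_{m|n,\mZ}\subseteq\End_\mZ(V^{\otimes r}_{m|n,\mZ})$ is a free $\mZ$-module, $\k S_r$ surjects onto $\k\otimes_\mZ I$, and $\im\Phi^{(r)}_{m|n}$ is the image of $\k\otimes_\mZ I\to\End_\k(V^{\otimes r}_{m|n})$; hence $\dim_\k\im\Phi^{(r)}_{m|n}\le\operatorname{rank}_\mZ I=\dim_\mC\im\Phi^{(r)}_{m|n,\mC}<r!=\dim_\k\k S_r$. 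So $\Phi^{(r)}_{m|n}$ is not injective, i.e.\ $V^{\otimes r}_{m|n}$ is not faithful, for every such $r$; this yields $r_p(m,n)\le m+n+mn$.

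The main point is $r_p(m,n)\le m+n+(p-2)n=m+(p-1)n$. Set $r:=m+(p-1)n+1$ and consider the nonzero element $a:=\sum_{\sigma\in S_r}\operatorname{sign}(\sigma)\sigma\in\k S_r$. I claim $\Phi^{(r)}_{m|n}(a)=0$, which shows $V^{\otimes r}_{m|n}$ is not faithful and hence $r_p(m,n)<r$. To see this, work over $\mZ$ and compute the matrix of $a$ on the standard basis of $V^{\otimes r}_{m|n,\mZ}$ consisting of pure tensors $t'=v_1\otimes\cdots\otimes v_r$ in the basis vectors $\sv_1,\dots,\sv_m,\sv_1',\dots,\sv_n'$. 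Since each $\sigma$ sends such a $t'$ to $\pm$ a basis tensor with the same multiset of entries, the coefficient of a basis tensor $t$ in $a\cdot t'$ is a signed count of the $\sigma$ with $\sigma t'=\pm t$; fixing one such $\sigma_0$ (if it exists), the others are $\sigma_0\rho$ with $\rho$ running over permutations of $t'$ within each value-class. A short computation with the Koszul sign rule shows that permuting equal \emph{odd} entries of $t'$ multiplies it by $\operatorname{sign}(\rho)$, while permuting equal \emph{even} entries fixes it. Hence this coefficient equals, up to an overall sign, $\bigl(\prod_j\sum_{S_{c_j}}\operatorname{sign}\bigr)\cdot\prod_i b_i!$, where $c_j$ is the multiplicity of $\sv_j$ and $b_i$ that of $\sv_i'$ in $t'$. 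The factor $\prod_j\sum_{S_{c_j}}\operatorname{sign}$ vanishes unless every $c_j\le 1$, so the coefficient is $0$ whenever some even basis vector is repeated in $t'$; otherwise $t'$ has at most $m$ even entries, so $\sum_i b_i=r-(\#\text{ even entries})\ge r-m=(p-1)n+1$, and since there are only $n$ odd basis vectors the pigeonhole principle forces $b_i\ge p$ for some $i$, whence $p\mid\prod_i b_i!$. In every case $p$ divides the entry, so $\Phi^{(r)}_{m|n,\mZ}(a)\in p\,\End_\mZ(V^{\otimes r}_{m|n,\mZ})$ and therefore $\Phi^{(r)}_{m|n}(a)=0$. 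Applying this to $(n,m)$ and using $r_p(m,n)=r_p(n,m)$ gives $r_p(m,n)\le m+n+(p-2)m$ as well, and combining the three bounds proves the lemma.

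The only genuinely delicate step is the entry computation, in particular the asymmetry between repeated odd and repeated even entries: $\sum_{S_c}\operatorname{sign}$ kills the contribution of a repeated even vector, whereas $\sum_{S_b}\operatorname{sign}^2=b!$ does \emph{not} kill that of a repeated odd vector — this is precisely what produces the factorials and thus the characteristic-$p$ divisibility, so it is the place where the Koszul signs must be tracked carefully. (The bound $m+n+mn$ is the only part of the proof relying on an external input, namely the characteristic-zero value of $r_p$ recalled in Remark~\ref{RP}(i).)
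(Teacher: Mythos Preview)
Your proof is correct. For the bound $r_p(m,n)\le m+(p-1)n$ you and the paper argue identically: the full skew symmetriser $a=\sum_{\sigma}\operatorname{sign}(\sigma)\sigma$ annihilates $V^{\otimes r}$ once $r=m+(p-1)n+1$, by the even/odd pigeonhole count you spell out (the paper states this more tersely). The genuine difference is in how the bound $r_p(m,n)\le m+n+mn$ is obtained. The paper exhibits an explicit nonzero kernel element, namely $a_\lambda b_\lambda$ for $\lambda=((n+1)^{m+1})$, and uses the characteristic-zero result of Berele--Regev to see that it annihilates $V^{\otimes r}_{m|n,\mZ}$; nonvanishing of $a_\lambda b_\lambda$ in $\k S_r$ comes from $R_\lambda\cap C_\lambda=\{1\}$. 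You instead run a pure rank argument: the integral image $I=\im\Phi_{m|n,\mZ}^{(r)}$ is free of rank $\dim_\mC\im\Phi_{m|n,\mC}^{(r)}<r!$, and $\im\Phi_{m|n}^{(r)}$ is a quotient of $\k\otimes_\mZ I$, forcing non-injectivity. Your route avoids naming any specific element and works uniformly, while the paper's approach has the advantage of producing a concrete witness in the kernel. A structural difference is that the paper first reduces to $m\ge n$ and then does a case split on whether $m<p-2$ or $m\ge p-2$, proving only the relevant bound in each case, whereas you establish all three inequalities in general and take the minimum at the end.
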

\begin{proof}
In view of Remark~\ref{RP}(ii), we may assume that $m\geq n$ and then prove $r_p(m,n)\leq m+n+n\min(m,p-2).$
Suppose first that $m< p-2$. We have to prove that for $r:=m+n+mn+1$ the $\k S_r$-module $V_{m|n}^{\otimes r}$ is not faithful. Let $\lambda$ be the partition $((n+1)^{m+1})$ of $r$. We denote the corresponding row centralizer by $R_\lambda$ and the corresponding column centralizer by $C_\lambda$. Let $a_\lambda=\sum_{\sigma\in R_\lambda}\sigma$ and $b_\lambda=\sum_{\sigma\in C_\lambda}\operatorname{sign}(\sigma)\sigma$. These are considered as elements of the group algebra $R S_n$ for $R=\k,\mC$ or $\mZ$ depending on the context. Note that $a_\lambda b_\lambda$ is a multiple of Young's idempotent corresponding to $\lambda$, so it annihilates all irreducible $\mC S_r$-modules except $S^\lambda_\mC$. Moreover, $a_\lambda b_\lambda$ is non-zero when considered as an element of $\k S_r$. On the other hand, by \cite[3.20]{Berele}, $V_{m|n,\mC}^{\otimes r}$ is missing the irreducible constituent $S^\lambda_\mC$, so $a_\lambda b_\lambda V_{m|n,\mC}^{\otimes r}=0$, hence $a_\lambda b_\lambda V_{m|n,\mZ}^{\otimes r}=0$, hence $a_\lambda b_\lambda V_{m|n}^{\otimes r}=0$. 

Suppose $m\geq p-2$. We have to prove that for $r:=m+(p-1)n+1$ the $\k S_r$-module $V_{m|n}^{\otimes r}$ is not faithful. We have a basis $u_1\otimes \dots\otimes u_r$ of $V_{m|n}^{\otimes r}$, where each $u_i$ is equal to some standard basis element $\sv_s$ or $\sv_t'$. Since $r=m+(p-1)n+1$, either $u_i=u_j=\sv_s$ for some $s$ and some distinct $i,j$, or $u_{i_1}=\dots=u_{i_p}=\sv_t'$ for some $t$ and some distinct $i_1,\dots,i_p$. In both cases, it follows easily that  $(\sum_{\sigma\in S_r}\operatorname{sign}(\sigma)\sigma)(u_1\otimes \dots\otimes u_r)=0$. 
\end{proof}

In view of Remark~\ref{RP}(ii), we may assume that $m\geq n$ whenever convenient. We also henceforth focus only on $p>2$. The goal of this section is to prove the following theorem:

\begin{theorem}\label{TMain0} 
Let $p>2$, $m\geq n$ and set $d=(p+n-m)/2$. Then the maximal value of $r$ for which $\Phi^{(r)}_{m|n}$ in \eqref{EqPhi} is injective is
$$
r_p(m,n)\;=\; \begin{cases}
m +(p-1)n & \mbox{ if } d<2;\\
m+n+mn&\mbox{ if } d>n,\; (\mbox{\it i.e. }p>m+n);\\
\lceil  (n+1)p-1-d^2\rceil &\mbox{ if } 2\le  d\le n.\\
\end{cases}
$$
\end{theorem}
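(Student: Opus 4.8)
The statement asserts that $V_{m|n}^{\otimes r}$ is a faithful $\k S_r$-module exactly when $r\le r_p(m,n)$, with $r_p(m,n)$ given by the three-case formula. By Lemma~\ref{LemDim}(ii), since $\k S_r$ is self-injective, faithfulness is equivalent to the statement that $V_{m|n}^{\otimes r}$ contains every indecomposable projective $P^\lambda$ as a summand, so the entire problem is reduced to a question about which $P^\lambda$ appear in the tensor space and with what multiplicity. The first step is therefore to obtain a workable handle on this. The natural tool is the sign-permutation module $M^{\lambda|\mu}$: one knows (or proves by a Schur--Weyl/highest-weight analysis of $V_{m|n}^{\otimes r}$ as a $GL(m|n)$-module, cf.\ the material in \S\ref{SecPrel}) that $V_{m|n}^{\otimes r}$ decomposes, as a $\k S_r$-module, into a sum of the modules $M^{\lambda|\mu}$ over the bi-compositions $(\lambda,\mu)\in\Lambda(m|n,r)$ (the $\lambda$-part from the even coordinates, the $\mu$-part, carrying signs, from the odd ones). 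Thus $P^\nu$ is a summand of $V_{m|n}^{\otimes r}$ if and only if it is a summand of some $M^{\lambda|\mu}$ with $\ell(\lambda)\le m$, $\ell(\mu)\le n$, $|\lambda|+|\mu|=r$.

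\textbf{Reduction to a combinatorial threshold.} The key observation is that $M^{\lambda|\mu}=\Ind_{S_\lambda\times S_\mu}^{S_r}\k_{\lambda|\mu}$ is a projective $\k S_r$-module precisely when the ``non-projective obstruction'' coming from $p$ disappears; more usefully, one shows that $P^\nu\mid M^{\lambda|\mu}$ can be detected by a condition on $\nu$ relative to $(\lambda,\mu)$ involving $p$-cores/$p$-weights, and that as $(\lambda,\mu)$ ranges over all admissible bi-compositions of $r$ one gets \emph{every} $P^\nu$ as long as $r$ is small enough, while for $r$ one larger there is a distinguished $\nu$ whose projective cover is missing. Concretely, the plan is: (1) produce, for every $p$-regular $\nu\vdash r$ with $r\le r_p(m,n)$, an explicit bi-composition $(\lambda,\mu)$ such that the projective cover $P^\nu$ is a summand of $M^{\lambda|\mu}$ — the cleanest route is to find $(\lambda,\mu)$ for which $M^{\lambda|\mu}$ is itself projective and has $D^\nu$ in its head, using a Mackey/Littlewood--Richardson count of $[\,M^{\lambda|\mu}:D^\nu\,]$ together with a self-injectivity argument; (2) conversely, for $r=r_p(m,n)+1$ exhibit one $\nu\vdash r$ — in the small-$d$ regime this is $\nu$ with a long $p$-hook or the sign representation restricted appropriately, in the generic/large-$p$ regime $\nu=((n+1)^{m+1})$ as in Lemma~\ref{LUpper}, and in the middle regime the ``staircase'' partition suggested by the $d^2$ term — for which $a_\nu b_\nu$ (a Young symmetrizer, nonzero in $\k S_r$) annihilates $V_{m|n}^{\otimes r}$, by checking $a_\nu b_\nu$ already kills $V_{m|n,\mC}^{\otimes r}$ via Berele's branching rule \cite[3.20]{Berele} and then reducing mod $p$.

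\textbf{Dissecting the three cases.} The trichotomy in $d=(p+n-m)/2$ reflects which of two competing obstructions is binding: the ``column'' obstruction — a column of length $>m$ among the even-type tensor factors, or a $p$-th power among odd ones — which forces $r\le m+(p-1)n$ and dominates when $d<2$; and the ``rectangle'' obstruction of Berele — the rectangle $((n+1)^{m+1})$, forcing $r\le m+n+mn$ — which dominates when $p$ is large ($d>n$). In the intermediate range $2\le d\le n$ the true bound is smaller than both of these and is governed by the largest $r$ admitting a partition avoiding \emph{all} missing-constituent obstructions simultaneously; optimizing the shape (a near-rectangle of width $n+1$ with a triangular ``defect'' of side $\sim d$ carved out, whose area is $\sim d^2$) yields the value $\lceil (n+1)p-1-d^2\rceil$, and the ceiling is there because $d^2=(p+n-m)^2/4$ need not be an integer. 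I would treat each case by (a) proving the stated value is an \emph{upper} bound via an explicit annihilating Young symmetrizer as above, and (b) proving it is a \emph{lower} bound by showing every $P^\nu$ occurs, reducing (b) to a purely combinatorial claim: every $p$-regular partition of $r\le r_p(m,n)$ fits inside a shape realizable as the union of $m$ rows and $n$ ``$p$-fattened'' rows in a way that makes the corresponding $M^{\lambda|\mu}$ contribute $P^\nu$.

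\textbf{Main obstacle.} The hard part is step (b) in the intermediate regime $2\le d\le n$: one must show the bound $\lceil (n+1)p-1-d^2\rceil$ is actually attained, i.e.\ that for every $p$-regular $\nu$ of that size the projective $P^\nu$ genuinely appears in $\bigoplus M^{\lambda|\mu}$. This requires a tight two-way combinatorial estimate — matching an upper bound coming from one family of Young symmetrizers against a lower bound coming from explicit projective summands — and it is precisely the place where the non-sharpness of Lemma~\ref{LUpper} noted in the introduction is resolved, so no softer argument will do. I expect this to be where essentially all the technical work (a careful analysis of $p$-hooks, $p$-cores, and the block combinatorics of the sign-permutation modules $M^{\lambda|\mu}$) is concentrated.
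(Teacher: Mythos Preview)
Your overall framework---reduction via Lemma~\ref{LemDim}(ii) and the decomposition $V_{m|n}^{\otimes r}\simeq\bigoplus_{(\lambda,\mu)\in\Lambda(m|n,r)}M^{\lambda|\mu}$---matches the paper. But there is a genuine gap in your upper-bound argument for the intermediate regime $2\le d\le n$, and your lower-bound plan misses the paper's key tool.

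\textbf{Upper bound: the Young symmetrizer argument cannot work for $2\le d\le n$.} You propose to exhibit, for $r=r_p(m,n)+1$, a partition $\nu\vdash r$ with $a_\nu b_\nu\, V_{m|n,\mC}^{\otimes r}=0$ and then reduce mod $p$. But by \cite[3.20]{Berele}, $a_\nu b_\nu$ kills $V_{m|n,\mC}^{\otimes r}$ exactly when $S^\nu_\mC$ is absent, i.e.\ when $\nu_{m+1}>n$, which forces $r\ge(m+1)(n+1)$. In the intermediate regime one always has $r_p(m,n)<m+n+mn$ (e.g.\ Example~\ref{EEx1}: $r_5(7,7)+1=34$ while $(m+1)(n+1)=64$), so no such $\nu$ exists: every $S^\nu_\mC$ appears in $V_{m|n,\mC}^{\otimes r}$, and the non-faithfulness is a purely modular phenomenon invisible over $\mC$. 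Your argument therefore reproves only Lemma~\ref{LUpper}, which you yourself note is non-sharp here. The paper's route is entirely different: via Donkin's formula (Theorem~\ref{TDKlyachko}) the multiplicity of the sign-Young module $Y^{\alpha|0}$ in $M^{\lambda|\mu}$ equals $\dim L(\alpha|0)_{\lambda|\mu}$, a weight space of the simple $GL(r|r)$-module $L(\alpha|0)$. The upper bound is then obtained by exhibiting, for each $1\le s\le\min(p-1,n+1)$, a $p$-restricted partition $\alpha_{(n,s)}\vdash t^s_p(m,n)+1$ (Example~\ref{Exalpha}) for which all such weight spaces vanish, proved by computing the highest weight of $L(\alpha_{(n,s)}|0)$ for the non-standard Borel $B_{m,n}$ via odd reflections (Lemma~\ref{LemBK}).

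\textbf{Lower bound: the weight-space translation is the missing ingredient.} Your plan to detect $P^\nu\mid M^{\lambda|\mu}$ through $p$-core/block combinatorics and Mackey counts is not the paper's approach and is left vague exactly where it would need to be sharp. The paper again uses Theorem~\ref{TDKlyachko}: it suffices, for each $\alpha\vdash t$, to show $L(\alpha|0)_{\lambda|\mu}\ne0$ for some $(\lambda,\mu)\in\Lambda(m|n,t)$. This is done (in the spirit of Suprunenko) by applying an explicit product $Y$ of odd lowering operators $E_{i',j}$ in the hyperalgebra of $GL(t|t)$ to a highest weight vector $v_+$ and proving $Yv_+\ne0$ by computing $XYv_+$ as a nonzero scalar multiple of $v_+$ (Lemma~\ref{LGoodLower}); the combinatorial core is Lemma~\ref{Claim1}, which guarantees that the indices used fit inside $\{1,\dots,n\}$. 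The quadratics $t^s_p(m,n)=s(m+1)+(p-s)(n+1-s)-1$ and their minimization over $s$ (whence the $d^2$ in the statement) arise directly from this construction, not from a shape-fitting optimization.
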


In the following subsections we will actually prove an alternative expression. For this, we introduce, for $s\in\mN$:
\begin{equation}\label{Es}
t^\indexi_p(m,n):=\indexi(m+1)+(p-\indexi)(n+1-\indexi)-1 = \indexi m+(p-\indexi)n+ \indexi(\indexi-p)+p-1
\end{equation}
and 
\begin{equation}\label{ER2}
t=t_p(m,n):=\min\{t^\indexi_p(m,n)\mid 1\leq \indexi\leq \min(p/2,n+1)\}.
\end{equation}

\begin{theorem}\label{TMain} 
Let $p>2$ and 
$m\geq n$. Then $
r_p(m,n)=t_p(m,n).$
\end{theorem}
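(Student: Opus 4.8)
The proof rests on the reformulation supplied by Lemma~\ref{LemDim}(ii): since $\k S_r$ is self-injective, $\Phi^{(r)}_{m|n}$ is injective if and only if $V_{m|n}^{\otimes r}$ is a faithful $\k S_r$-module, which in turn holds if and only if every indecomposable projective $P^\nu$ ($\nu\vdash r$, $p$-regular) occurs as a direct summand of $V_{m|n}^{\otimes r}$. The first step is to decompose tensor superspace accordingly. Partitioning the standard basis $\{\sv_{f(1)}\otimes\cdots\otimes\sv_{f(r)}\}$ into $S_r$-orbits, recorded by which positions carry which even or odd colour, and bookkeeping the super-signs inside each orbit, one obtains
\[
V_{m|n}^{\otimes r}\;\cong\;\bigoplus_{(\lambda,\mu)}M^{\lambda|\mu},
\]
where $(\lambda,\mu)$ runs over the bi-compositions of $r$ with at most $m$, respectively $n$, nonzero parts. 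Moreover, reading $M^{\lambda|\mu}$ through the standard presentation of an induced module, one checks that for $x\in\k S_r$ one has $x\cdot M^{\lambda|\mu}=0$ if and only if $x\cdot a_\lambda b_\mu=0$ in $\k S_r$, with $a_\lambda=\sum_{g\in S_\lambda}g$ and $b_\mu=\sum_{h\in S_\mu}\operatorname{sign}(h)h$. Thus Theorem~\ref{TMain} becomes: $t_p(m,n)$ is the largest $r$ such that for every $p$-regular $\nu\vdash r$ the projective $P^\nu$ is a summand of $M^{\lambda|\mu}$ for at least one bi-composition $(\lambda,\mu)$ of $r$ with at most $m$ even and at most $n$ odd parts. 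By Remark~\ref{RP}(ii) we may assume $m\ge n$.

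\emph{Upper bound $r_p(m,n)\le t_p(m,n)$.} It suffices to show, for each integer $s$ with $1\le s\le\min(p/2,n+1)$, that $V_{m|n}^{\otimes r}$ fails to be faithful for $r:=t^s_p(m,n)+1=s(m+1)+(p-s)(n+1-s)$; taking the minimum over $s$ then gives the claim. I would produce a nonzero $z_s\in\k S_r$ with $z_s V_{m|n}^{\otimes r}=0$, generalising the two mechanisms in the proof of Lemma~\ref{LUpper}. The natural candidate is the signed Young symmetriser $z_s=a_{\nu(s)}b_{\nu(s)}$ attached to a tableau of the shape (or its conjugate) $\nu(s):=\bigl((m{+}1)^s,(n{+}1{-}s)^{p-s}\bigr)\vdash r$, which at $s=n+1$ is the Young symmetriser of the non-hook partition $((n+1)^{m+1})$ and at $s=1$ is built on a shape with $n$-wide rows and columns of length $p$. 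That $z_s\ne0$ in $\k S_r$ is automatic: the coefficient of the identity in any signed Young symmetriser equals $1$, since the row and column stabilisers of a tableau meet trivially. The substance is the identity $z_s\cdot a_\lambda b_\mu=0$ for every valid bi-composition $(\lambda,\mu)$ of $r$; I would verify this over $\mZ$ and reduce modulo $p$. Because $r>t^s_p(m,n)$, the pigeonhole principle forces, for every such $(\lambda,\mu)$, either an even colour to occur more than $s$ times within a column of $\nu(s)$, or an odd colour to occur at least $p$ times within a column, and in either case the corresponding coefficient of $z_s\cdot a_\lambda b_\mu$ acquires a factor of $p$ (the odd case being the familiar phenomenon that $p\mid p!$ makes the full alternating sum annihilate $\sv_t'^{\otimes p}$). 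Hence $r_p(m,n)\le t^s_p(m,n)$ for each admissible $s$, and so $r_p(m,n)\le t_p(m,n)$.

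\emph{Lower bound $r_p(m,n)\ge t_p(m,n)$.} Now one must show that for $r\le t_p(m,n)$ every projective $P^\nu$ occurs among the summands of $\bigoplus_{(\lambda,\mu)}M^{\lambda|\mu}$. The natural route is via the projective modules among the $M^{\lambda|\mu}$: if all parts of $(\lambda,\mu)$ are $<p$ then $S_\lambda\times S_\mu$ has order prime to $p$, so $M^{\lambda|\mu}$ is projective, and a projective module contains $P^\nu$ precisely when $D^\nu$ appears in its head; by Frobenius reciprocity this happens exactly when the $\k_{\lambda|\mu}$-isotypic component of $\operatorname{Res}^{S_r}_{S_\lambda\times S_\mu}D^\nu$ is nonzero, equivalently when $P^\nu$ is the projective signed Young module attached to a $p$-restricted bi-partition fitting inside $(\lambda,\mu)$ (supplemented, if needed, by projective summands of the non-projective $M^{\lambda|\mu}$). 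So the task becomes: given a $p$-regular $\nu\vdash r$ with $r\le t_p(m,n)$, exhibit a bi-composition $(\lambda,\mu)$ of $r$ with at most $m$ even parts, at most $n$ odd parts, all parts $<p$, realising such a non-vanishing component; and, dually, show that for $r=t_p(m,n)+1$ some $p$-regular $\nu$ admits no such $(\lambda,\mu)$ (this $\nu$ being precisely the shape obstructed by $z_s$ for the minimising $s$). Which sizes $r$ permit accommodating the ``most spread-out'' $p$-regular partition under the three constraints (at most $m$ even parts, at most $n$ odd parts, each part $<p$) is a combinatorial optimisation whose optimum turns out to be $\min_s t^s_p(m,n)=t_p(m,n)$, the index $s$ recording how many odd columns are forced to have length a multiple of $p$.

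\emph{Where the difficulty lies.} The crux is the lower bound, and more precisely making the combinatorics tight: one needs enough control over restrictions of simple $\k S_r$-modules to Young subgroups — equivalently, over which signed Young modules are projective and which sign-permutation modules they divide — to pin down the boundary value $t_p(m,n)$, and in particular to see the three regimes $d<2$, $2\le d\le n$, $d>n$ of Theorem~\ref{TMain0} surface as the cases where the optimal $s$ equals $1$, an interior value near $d$, or $n+1$. Reconciling that optimum with the explicit killing elements of the upper bound is the technical heart of the section; the deduction of Theorem~\ref{TMain0} from Theorem~\ref{TMain} is afterwards just the elementary minimisation of $s\mapsto t^s_p(m,n)$, which puts the answer in the stated closed form.
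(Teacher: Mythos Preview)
Your reduction via Lemma~\ref{LemDim}(ii) and the decomposition $V_{m|n}^{\otimes r}\cong\bigoplus M^{\lambda|\mu}$ match the paper's set-up, but both bounds then have genuine gaps.

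\emph{Upper bound.} For $1\le s\le n$ the shape $\nu(s)=((m{+}1)^s,(n{+}1{-}s)^{p-s})$ satisfies $\nu(s)_{m+1}\le n$, so it lies in the $(m,n)$-hook and $S^{\nu(s)}_\mC$ occurs in $V_{m|n,\mC}^{\otimes r}$; hence the Young symmetriser $z_s$ does not annihilate tensor superspace over $\mC$, and your claim becomes a divisibility-by-$p$ statement over $\mZ$ that the pigeonhole does not establish. Concretely, the criterion ``an even colour more than $s$ times in a column'' has no bearing on whether $b_{\nu(s)}$ vanishes (two equal even entries in a column already give zero), and nothing forces either of your alternatives: at $p=5$, $m=n=7$, $s=2$ one can fill $\nu(2)=(8^2,6^3)$ with seven even colours of multiplicities $(5^6,4)$ avoiding any repetition in a column. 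The paper's upper bound proceeds entirely differently: via Donkin's Theorem~\ref{TDKlyachko} it reduces to showing $L(\alpha|0)_{\lambda|\mu}=0$ for all $(\lambda,\mu)\in\Lambda(m|n)$ for a specific $p$-restricted $\alpha=\alpha_{(n,s)}$ (which is neither your $\nu(s)$ nor its conjugate), and this is read off from the highest weight of the simple $GL(r|r)$-module $L(\alpha|0)$ with respect to a non-standard Borel, computed by iterated odd reflections (Lemma~\ref{LemBK}, Example~\ref{Exalpha}).

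\emph{Lower bound.} You observe that $M^{\lambda|\mu}$ is projective when all parts are $<p$, but then simply assert that the resulting optimisation has optimum $t_p(m,n)$; that is the theorem, and nothing in your sketch proves it. The paper again passes through Theorem~\ref{TDKlyachko} to reduce to exhibiting, for every partition $\alpha$ of $t$, some $(\lambda,\mu)\in\Lambda(m|n,t)$ with $L(\alpha|0)_{\lambda|\mu}\ne 0$. It then constructs an explicit nonzero weight vector by applying a carefully chosen product $Y$ of odd lowering operators $E_{i',j}$ in the hyperalgebra $U_\k$ to the highest weight vector $v_+$, and verifies nonvanishing by computing $XYv_+$ as a nonzero scalar multiple of $v_+$ (Lemma~\ref{LGoodLower}). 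That the weight of $Yv_+$ actually lies in $\Lambda(m|n,t)$ is the content of the combinatorial Lemma~\ref{Claim1}, whose proof is precisely where the minimum over $s$ defining $t_p(m,n)$ enters. Neither of these ingredients appears in your outline.
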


%In the remainder of this subsection we show that the two theorems are equivalent and give some remarks and examples.

If $n=0$, both theorems return $r_p(m,0)=m$. So  
in view of Remark~\ref{RP}(iii), we may assume that $n\neq 0$, so from now on we assume that $m\geq n\geq 1$.

\subsubsection{Equivalence of Theorems~\ref{TMain0} and~\ref{TMain}}
Firstly we can observe that 
\begin{equation}\label{ERange}
t_p(m,n)=\min\{t^\indexi_p(m,n)\mid 1\leq \indexi\leq n+1\}.
\end{equation}
Indeed, for \eqref{Es}, interpreted as a (quadratic) function of $s:\mathbb{R}\to\mathbb{R}$, the unique local minimum is achieved at
$d=(p+n-m)/2\leq p/2.$
If $d$ is an integer this is the unique minimum of $s:\mZ\to \mZ$. If $d$ is a half integer, then the minimum value is achieved at $d\pm 1/2$. This completes the proof of (\ref{ERange}).

Now, the case $d<2$ corresponds to the case where the minimum of $s:\mZ\to \mZ$ is achieved either left of the interval $[1,n+1]$ or at $s=1$. Both cases lead to $t_p(m,n)=t^1_p(m,n)=m+(p-1)n$. 
The case $d>n$ leads to $t_p(m,n)=t^{n+1}_p(m,n)=m+n+mn$.
The case $2\leq d\leq n$ leads to $t_p(m,n)=t^{d}_p(m,n)$ or $t_p(m,n)=t^{d\pm1/2}_p(m,n)$ both of which yield $\lceil  (n+1)p-1-d^2\rceil$.

%The remainder of the cases in \ref{TMain0} are derived similarly.
\label{RemMain}

\begin{remark} \label{R2}
\begin{enumerate}[label=(\roman*)]
\item It is easy to see that $t\geq m$.

%Indeed, if $p-1<n+1$, it suffices to note that for any $j$ satisfying $p\leq j\leq n+1$ we have $r_j^{(j)}(m,n)\geq r_p^{(p-1)}(m,n)$, which after simplification boils down to the inequality 
%$$
%(p-1)(m-n)-p+1\leq j(m-n)+j(j-p),
%$$
%which is clear since $j\geq p$. 
%
%\color{red}Furthermore, it is also easy to see that the condition $\indexi\le p-1$ can be made stricter to $\indexi<p/2$ in \eqref{ER2}.
%\color{black}

\item We can restate Lemma~\ref{LUpper} as the claim $r_p(m,n)\le \min(t^1_p(m,n),t^{n+1}_p(m,n))$.
\end{enumerate}
\end{remark}

\begin{example}
We continue to assume $m\ge n\ge 1$.
\begin{enumerate}[label=(\roman*)]
\item If $n=1$,  then
$$r_p(m,1)=m+1+\min(m,p-2),$$ which is exactly the upper bound of  Lemma~\ref{LUpper}.  

\item If $p=3$, then 
$$r_3(m,n)=m+2n,$$ which is exactly the upper bound of  Lemma~\ref{LUpper}.  

\item If $p=5$, then 
$$r_5(m,n)=\min(m+4n,2m+3n-2),$$
which can be below the upper bound $\min(m+4n,m+n+mn)$ of Lemma~\ref{LUpper}, see Example~\ref{EEx1}.

\item If $p=7$, then
$$r_7(m,n)=\min(m+6n,2m+5n-4,3m+4n-6),$$
where the last term $3m+4n-6$ has to be omitted if $n=1$.

\item For a fixed $p$, the generic value of $r_p(m,n)$, i.e. for $m\gg n$, is $m+(p-1)n$. As has to be the case a priori (see \cite[Remark~4.4]{CEO}), for fixed $m,n$ and generic $p\gg 0$, we have $r_p(m,n)=r_0(m,n)$.
\end{enumerate}
\end{example}

\begin{example}\label{EEx1} 
Let $p=5$ and $m=n=7$. We have $t_5^1(7,7)=35$ and $t_5^2(7,7)=33$, so $r_5(7,7)=33$. This is below the upper bound $$\min(t_5^1(7,7), t_5^8(7,7))=\min(35,63)=35$$
from Lemma~\ref{LUpper}.
%
%
%$
%r_5^{(1)}=35,\ r_5^{(2)}=33,\ r_5^{(3)}=33,\ r_5^{(4)}=35,
%$
%so in this case $t=33$. (Note that $r_5^{(5)}=39,\ r_5^{(6)}=45,\ r_5^{(7)}=53,$ and $r_5^{(8)}=63$.)
\end{example}

We will need the following observation:

\begin{lemma}\label{L<p}
If $\alpha=(\alpha_1,\dots,\alpha_k)$ is a partition of $t$ then $\alpha_{m+1}\leq \min(p-1,n)$. 
\end{lemma}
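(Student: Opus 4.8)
The plan is to prove that $\alpha_{m+1} \le \min(p-1, n)$ by exploiting the specific structure of $t = t_p(m,n)$ as a minimum of the values $t^s_p(m,n)$. First I would recall that a partition $\alpha$ of $t$ has $\alpha_{m+1} = q$ precisely when its first $m+1$ rows all have length at least $q$, which forces $t = |\alpha| \ge (m+1)q$. So if I can show $t < (m+1)\min(p,n+1)$, I get $\alpha_{m+1} \le \min(p,n+1) - 1 = \min(p-1, n)$, which is exactly the claim. Thus the whole lemma reduces to the inequality $t_p(m,n) < (m+1)\min(p, n+1)$.

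To establish this inequality, I would use a well-chosen value of $s$ in the defining minimum \eqref{ER2}. Recall $t^s_p(m,n) = s(m+1) + (p-s)(n+1-s) - 1$. There are two cases according to whether $n+1 \le p$ or $p < n+1$ (though since $m \ge n$, typically $n$ can be large relative to $p$; both must be handled). In the case $\min(p, n+1) = n+1$ (i.e. $n+1 \le p$), I would try $s = n+1$, which lies in the allowed range $1 \le s \le \min(p/2, n+1)$ only if $n+1 \le p/2$; when $n+1 > p/2$ one instead picks $s$ near $p/2$. A cleaner route: since $1$ is always in the range, $t \le t^1_p(m,n) = m + (p-1)n$; comparing with $(m+1)(n+1) = m+n+mn+1$, we need $m + (p-1)n < m + n + mn + 1$, i.e. $(p-2)n < mn + 1$, i.e. $(p-2-m)n < 1$. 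This holds when $m \ge p-2$. For $m < p-2$ (so $d = (p+n-m)/2 > n$, giving $t = t^{n+1}_p(m,n) = m+n+mn$ per the equivalence discussion), we directly get $t = m+n+mn < m+n+mn+1 = (m+1)(n+1)$, and also $t = m+n+mn < (m+1)p$ since $m+n+mn < (m+1)p \iff n(m+1) < mp + p - m = (m+1)p - m$, which holds as $n \le m < p$. So in every case one of the two bounds $(m+1)(n+1)$ or $(m+1)p$ strictly exceeds $t$.

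The cleanest unified argument I would aim to write: since $\min(p, n+1) \ge 2$ (as $p \ge 3$ and $n \ge 1$), set $q := \min(p, n+1)$ and show $t \le (m+1)q - 1$ by taking $s$ to be an appropriate value. Actually the simplest is to observe $t \le t^1_p(m,n) = m+(p-1)n$ and $t \le t^{\min(p/2,n+1)}_p(m,n)$; one checks $\min(m+(p-1)n,\, \text{[second bound]}) < (m+1)q$ by the elementary case analysis above. I expect the main (minor) obstacle is bookkeeping the two sub-cases $s = 1$ versus $s$ near $\min(p/2, n+1)$ so that the correct inequality — against $(m+1)(n+1)$ in one regime, against $(m+1)p$ in the other — comes out strict; the arithmetic is routine once the right comparison is identified, and it all hinges on $m \ge n \ge 1$ and $p > 2$.
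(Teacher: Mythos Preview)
Your reduction to proving $t < (m+1)\min(p,n+1)$ is exactly right and is the same as the paper's. But your execution has a genuine gap: the implication ``$m < p-2 \Rightarrow d > n$'' is false once $n \ge 3$. Indeed $d > n$ is equivalent to $p > m+n$, whereas $m < p-2$ only gives $p > m+2$; these coincide only for $n \le 2$. So your ``cleaner route'' does not cover the regime $m+2 < p \le m+n$ (with $n \ge 3$), and you never carry out the promised computation with $s$ near $p/2$ to handle it. (Your bound $t \le t^1_p(m,n) = m+(p-1)n$ does always yield $t < (m+1)p$, so the inequality $\alpha_{m+1} \le p-1$ is fine in all cases; it is only $\alpha_{m+1} \le n$ that is left open.)

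The paper closes this gap effortlessly by working with the extended range \eqref{ERange}, $t = \min_{1 \le s \le n+1} t^s_p(m,n)$, rather than \eqref{ER2}. Then one simply plugs in $s = n+1$ to get $t \le t^{n+1}_p(m,n) = (m+1)(n+1) - 1$, and (when $p \le n+1$) $s = p$ to get $t \le t^p_p(m,n) = (m+1)p - 1$. No case analysis on $m$ versus $p-2$ is needed.
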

\begin{proof}
We will use the description of $t$ given in (\ref{ERange}). 

Suppose $\alpha_{m+1}>p-1$. Then $\alpha_{m+1}\geq p$ and so 
$$(m+1)p\leq \alpha_1+\dots+\alpha_k=t\leq t^i_p(m,n)$$ for all $1\leq i\leq n+1$. If $p\leq n+1$, then we have $t^p_p(m,n)=p(m+1)-1$ giving a contradiction. If $p> n+1$ then 
$$(m+1)p>(m+1)(n+1)-1=t^{n+1}_p(m,n),$$ giving a contradiction again. 

On the other hand, suppose $\alpha_{m+1}>n$. Then $\alpha_{m+1}\geq n+1$ and so 
$$(m+1)(n+1)\leq \alpha_1+\dots+\alpha_k =t\leq t^{n+1}_p(m,n)
=(n+1)(m+1)-1,
$$
giving a contradiction. 
\end{proof}

\subsection{Indecomposable summands of tensor superspace}\label{SecDon}
Recall the integer $t=t_p(m,n)$ from (\ref{ER2}) or (\ref{ERange}). 
To prove Theorem~\ref{TMain}, it suffices to show that the $\k S_t$-module $V_{m|n}^{\otimes t}$ is faithful, while the $\k S_{t+1}$-module $V_{m|n}^{\otimes (t+1)}$ is not. By Lemma~\ref{LemDim}(ii), this is equivalent to the statement that $V_{m|n}^{\otimes t}$ contains every projective indecomposable $\k S_t$-module as a summand, while $V_{m|n}^{\otimes (t+1)}$ is missing some projective indecomposable $\k S_{t+1}$-module as a summand. We therefore discuss these indecomposable summands, 
 following~\cite{Donkin}.
 
 Unless further specified, we consider arbitrary $k,l,r\in\mN$.

\subsubsection{}
Recall the notation for compositions and partitions from~\ref{part}. For $1\leq i\leq k$, we denote
$$
\epsilon_i:=(0,\dots,0,1,0,\dots,0)\in\Lambda(k)
$$
with $1$ in the $i$th position.  We have the usual dominance order on $\Lambda(k)$ and on each $\Lambda(k,r)$.

We will identify $\Lambda(k|l,r)$ with $\Lambda(k+l,r)$ via the bijection mapping $(\lambda,\mu)\in \Lambda(k|l,r)$ to $(\lambda_1,\dots,\lambda_k,\mu_1,\dots,\mu_l)\in\Lambda(k+l,r)$; the dominance order on $\Lambda(k|l,r)$ is then inherited from the dominance order on $\Lambda(k+l,r)$ via this identifcation.

%Choose a basis $\{\sv_1,\dots,\sv_m\}$ of $(V_{m|n})_\0$ and a basis $\{\sv_1',\dots,\sv_n'\}$ of $(V_{m|n})_\1$. 
\subsubsection{}Given $(\lambda,\mu)\in \Lambda(k|l,r)$, we denote
by $(V_{k|l}^{\otimes r})_{\lambda|\mu}$ the span of all $u_1\otimes \dots\otimes u_r$ such that for each $1\leq i\leq k$, exactly $\lambda_i$ of the $u_k$'s are equal to $\sv_i$, and for each $1\leq j\leq l$, exactly $\mu_j$ of the $u_k$'s are equal to $\sv_j'$, where $\{\sv_i,\sv_j'\}$ is our standard basis from \ref{basis}. We have a $\k S_r$-module decomposition
\begin{equation}\label{ETensWts}
V_{k|l}^{\otimes r}\simeq \bigoplus_{(\lambda,\mu)\in\Lambda(k|l,r)}(V_{k|l}^{\otimes r})_{\lambda|\mu}.
\end{equation}
Moreover, it is clear that for every $(\lambda,\mu)\in\Lambda(k|l,r)$, we have that 
\begin{equation}\label{EVMIso}
(V_{k|l}^{\otimes r})_{\lambda|\mu}\simeq M^{\lambda|\mu}
\end{equation}
 as $\k S_r$-modules. This explains our interest in sign-permutation modules. 
%\begin{lemma}\label{LTensPerm}
%In particular, as $\k S_r$-modules, $V_{m|n}^{\otimes r}\simeq \bigoplus_{(\lambda,\mu)\in\Lambda(m|n,r)}M^{\lambda|\mu}$.
%\end{lemma}
The indecomposable summands of sign-permutation modules have been studied in \cite{Donkin}. 

\subsubsection{} Donkin \cite{Donkin} defines the {\em Schur superalgebra} $S(k|l,r)$. 
%Here $k,l$ are arbitrary non-negative integers. 
By \cite[2.3(1)]{Donkin} (see also \cite[Theorem 5.2]{BK}), we have 
$$S(k|l,r)\simeq \End_{\k S_r}(V_{k|l}^{\otimes r}).$$ 
The algebra $S(k|l,r)$ comes with the orthogonal idempotents $\{\xi_{\lambda|\mu}\mid (\lambda,\mu)\in\Lambda(k|l,r)\}$ summing to the identity. Under the isomorphism $S(k|l,r)\simeq \End_{\k S_r}(V_{k|l}^{\otimes r})$, the idempotent $\xi_{\lambda|\mu}$ is the projection onto the summand $(V_{k|l}^{\otimes r})_{\lambda|\mu}$ as in (\ref{ETensWts}). For any $S(k|l,r)$-module $M$ and $(\lambda,\mu)\in\Lambda(k|l,r)$, the $(\lambda,\mu)$-weight space of $M$ is defined as 
$$
M_{\lambda|\mu}:=\xi_{\lambda|\mu}M.
$$

We will often need to work under the additional assumption $r\leq k,l$.
Recall
$$
\Lambda^+_p(k|l,r):=\{(\alpha,\beta)\in\Lambda^+(k)\times\Lambda^+(l)\mid |\alpha|+p|\beta|=r\}.
$$
Note that for $(\alpha,\beta)\in\Lambda_p(k|l,r)$, we have $(\alpha,p\beta)\in\Lambda(k|l,r)$, where $p\beta:=(p\beta_1,\dots,p\beta_m)$.

\begin{lemma} {\rm \cite[2.3(4)]{Donkin}, \cite[Theorem 5.5]{BK}}
Let $r\leq k,l$. 
For each $(\alpha,\beta)\in\Lambda_p^+(k|l,r)$ there exists an irreducible $S(k|l,r)$-module $L(\alpha|p\beta)$ with highest weight $(\alpha,p\beta)$, and 
$$\{L(\alpha|p\beta)\mid (\alpha,\beta)\in\Lambda_p^+(k|l,r)\}$$ is a complete irredundant set of irreducible $S(k|l,r)$-modules. 
\end{lemma}

\subsubsection{}In case $r\leq k,l$, the natural functor ${\mathcal F}_{k|l,r}=\Hom_{S(k|l,r)}(V^{\otimes r},-)$ from $S(k|l,r)$-modules to $\k S_r$-modules is exact. 
From now on it will be sufficient to work in the case $k=l=r$.  
Denote by $P(\alpha|p\beta)$ the projective cover of the $S(r|r,r)$-module $L(\alpha|p\beta)$ for $(\alpha,\beta)\in\Lambda_p^+(r|r,r)$. Then we have the $\k S_r$-modules 
$$
Y^{\alpha|p\beta}:={\mathcal F}_{r|r,r}(P(\alpha|p\beta))
$$
for all $(\alpha,\beta)\in\Lambda_p^+(r|r,r)$, called {\em sign-Young modules}. 

\subsubsection{} Now we consider again $m\ge n\ge 1$ and $t=t_p(m,n)$ from~\eqref{ER2}. We are thus interested in $V_{m|n}^{\otimes r}$ for $r=t$ or $r=t+1$. Note by Remark~\ref{R2}(i) that in both cases $n\leq m\leq r$. So every composition with $m$ parts can be considered as a composition with $r$ parts by adding $r-m$ zeros to the end, i.e. we naturally identify $\Lambda(m)$ with a subset of $\Lambda(r)$, and similarly for $\Lambda(n)$. In this way we will always consider elements of $\Lambda(m|n,r)$ as elements of $\Lambda(r|r,r)$. So the  weight space $L(\alpha|p\beta)_{\lambda|\mu}$ in the following theorem makes sense. Note also that the $\k S_r$-module $
M^{\lambda|\mu}$ is the same whether we consider $(\lambda,\mu)$ as an element of $\Lambda(m|n,r)$ or $\Lambda(r|r,r)$. 

\begin{theorem}\label{TDKlyachko} {\rm \cite[2.3(6),(7)]{Donkin}}
The modules 
$$\{Y^{\alpha|p\beta}\mid (\alpha,\beta)\in\Lambda_p^+(r|r,r)\}$$  are pairwise non-isomorphic and indecomposable. Moreover, for any $(\lambda,\mu)\in\Lambda(m|n,r)$, we have 
$$
M^{\lambda|\mu}\simeq\bigoplus_{(\alpha,\beta)\in\Lambda_p^+(m|n,r)}(\dim L(\alpha|p\beta)_{\lambda|\mu})Y^{\alpha|p\beta},
$$
for $L(\alpha,p\beta)$ the simple $S(r|r,r)$-module.
\end{theorem}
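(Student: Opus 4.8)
The statement has two parts, and both are direct transcriptions of results of Donkin on sign-permutation modules and the Schur superalgebra, so the plan is essentially to unwind definitions and quote the cited results correctly.

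First I would recall the setup: we work with $S(r|r,r) \simeq \End_{\k S_r}(V_{r|r}^{\otimes r})$ and the exact functor $\mathcal{F}_{r|r,r} = \Hom_{S(r|r,r)}(V_{r|r}^{\otimes r},-)$ from $S(r|r,r)$-modules to $\k S_r$-modules. The sign-Young modules are defined as $Y^{\alpha|p\beta} = \mathcal{F}_{r|r,r}(P(\alpha|p\beta))$. For the first assertion — that the $Y^{\alpha|p\beta}$ are pairwise non-isomorphic and indecomposable — I would invoke the general Schur-functor formalism: since $k=l=r$, the functor $\mathcal{F}_{r|r,r}$ restricts to a bijection between indecomposable projective $S(r|r,r)$-modules and their images, sending $P(\alpha|p\beta)$ to $Y^{\alpha|p\beta}$. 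This is precisely the content of \cite[2.3(6)]{Donkin} (the super analogue of Klyachko's / Donkin's classical result that the Schur functor sends indecomposable projective Schur-algebra modules to Young modules bijectively), so this part requires no independent argument.

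For the decomposition of $M^{\lambda|\mu}$, the plan is: by \eqref{EVMIso} we have $M^{\lambda|\mu}\simeq (V_{m|n}^{\otimes r})_{\lambda|\mu}=\xi_{\lambda|\mu}V_{m|n}^{\otimes r}$, and more to the point $M^{\lambda|\mu}\simeq \mathcal{F}_{r|r,r}$ applied to the corresponding weight space $\xi_{\lambda|\mu}(V_{r|r}^{\otimes r})$ of the regular-type module, since the $\k S_r$-module attached to $(\lambda,\mu)$ does not change when $(\lambda,\mu)$ is viewed inside $\Lambda(r|r,r)$. Now $\xi_{\lambda|\mu}(V_{r|r}^{\otimes r})$, as an $S(r|r,r)$-module, decomposes into indecomposable projectives with multiplicity of $P(\alpha|p\beta)$ equal to the dimension of the $(\lambda,\mu)$-weight space of $L(\alpha|p\beta)$ — this is the standard weight-space/BGG-reciprocity-type count of projective multiplicities in a permutation-type module for a quasi-hereditary (here: generalized Schur) algebra, and it is exactly \cite[2.3(7)]{Donkin}. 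Applying the exact, additive functor $\mathcal{F}_{r|r,r}$ and using $\mathcal{F}_{r|r,r}(P(\alpha|p\beta))=Y^{\alpha|p\beta}$ then yields the claimed formula, after noting that the sum may be restricted from $\Lambda_p^+(r|r,r)$ to $\Lambda_p^+(m|n,r)$ because $L(\alpha|p\beta)_{\lambda|\mu}=0$ whenever the highest weight $(\alpha,p\beta)$ fails to dominate $(\lambda,\mu)$, and $(\lambda,\mu)\in\Lambda(m|n,r)$ forces any dominating partition pair to lie in $\Lambda^+(m|n)$.

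The only genuine point requiring care — and hence the main "obstacle" — is the bookkeeping around the identification $\Lambda(m|n,r)\hookrightarrow\Lambda(r|r,r)$ and the claim that passing between the two does not alter $M^{\lambda|\mu}$ as a $\k S_r$-module, nor the relevant weight spaces of the $L(\alpha|p\beta)$; this is spelled out in the paragraph preceding the theorem and just needs to be invoked. Everything else is a citation of \cite[2.3(6),(7)]{Donkin} together with exactness and additivity of $\mathcal{F}_{r|r,r}$, so the proof is short.
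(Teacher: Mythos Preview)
The paper does not supply its own proof of this theorem; it is stated purely as a citation of \cite[2.3(6),(7)]{Donkin}, with no accompanying argument. Your proposal correctly recognises this and your sketch of how the statement is extracted from Donkin's framework (apply the exact additive Schur functor $\mathcal{F}_{r|r,r}$ to the decomposition of the relevant projective $S(r|r,r)$-module and read off multiplicities as weight-space dimensions) is exactly the intended unpacking.

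One minor imprecision worth flagging: your justification for restricting the sum from $\Lambda_p^+(r|r,r)$ to $\Lambda_p^+(m|n,r)$ via dominance is not correct as stated. A partition pair $(\alpha,p\beta)$ dominating some $(\lambda,\mu)\in\Lambda(m|n,r)$ need not itself lie in $\Lambda^+(m|n)$; for instance with $m=n=1$, $r=2$, the weight $((1,1)\,|\,0)$ dominates $((1)\,|\,(1))$ but $\alpha=(1,1)\notin\Lambda^+(1)$. The restriction in Donkin's formulation comes instead from working directly with $S(m|n,r)$ (whose simples are already indexed by $\Lambda_p^+(m|n,r)$), combined with the fact that the resulting sign-Young modules and the relevant weight-space dimensions are independent of the ambient $k,l$ once $k,l\ge r$. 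Since all of this is contained in the cited reference, it does not affect the correctness of your plan, only the phrasing of that one step.
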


Recall that a partition $\alpha=(\alpha_1,\dots,\alpha_k)$ is called $p$-restricted if $\alpha_i-\alpha_{i+1}<p$ for all $i=1,\dots,k-1$ and $\alpha_k<p$. 
By the classical theory of Young modules developed in \cite{JArc,Klyachko}, for a $p$-restricted partition $\alpha$ of $r$, we have that $Y^{\alpha|0}$ is the projective cover of the irreducible module $D^{\alpha'}\otimes\operatorname{sign}$. 
So, taking into account Lemma~\ref{LemDim}(ii), the $\k S_r$-module $V_{m|n}^{\otimes r}$ is faithful if and only if it contains every $Y^{\alpha|0}$ with $p$-restricted $\alpha\in\Lambda^+(r,r)$ as a summand. In view of (\ref{ETensWts}), (\ref{EVMIso}) and Theorem~\ref{TDKlyachko}, to prove Theorem~\ref{TMain}, it suffices to prove the following

\begin{prop}\label{PMainOld} 
Let $m\geq n\geq 1$ and $t$ be as in (\ref{ER2}). 
\begin{enumerate}
\item[{\rm (i)}] Then for every $\alpha\in\Lambda^+(t)$ there exists $(\lambda,\mu)\in\Lambda(m|n,t)$ such that $L(\alpha|0)_{\lambda|\mu}\neq 0$.
\item[{\rm (ii)}] There exists a $p$-restricted partition $\alpha\in\Lambda^+(t+1)$ such that $L(\alpha|0)_{\lambda|\mu}= 0$ for all $(\lambda,\mu)\in\Lambda(m|n,t+1)$.
\end{enumerate}
\end{prop}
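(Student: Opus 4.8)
The plan is to reformulate both parts as statements about the weights of the simple Schur-superalgebra module $L(\alpha|0)$, and then to deduce the reformulation by combining a combinatorial description of the set $\mathrm{Supp}(\alpha):=\{(\lambda,\mu)\in\Lambda(r|r,r)\mid L(\alpha|0)_{\lambda|\mu}\ne 0\}$ with the arithmetic of $t=\min_s t^s_p(m,n)$ from \eqref{ER2}. First I would record two features of $\mathrm{Supp}(\alpha)$: it is stable under the Weyl group $S_r\times S_r$, and, besides the highest weight $(\alpha,0)$, it contains a distinguished weight supported entirely on the odd coordinates --- this is the module-level shadow of the sign-twist $V_{m|n}^{\otimes r}\otimes\mathrm{sign}\cong V_{n|m}^{\otimes r}$, which induces an auto-equivalence of $S(r|r,r)\text{-mod}$ sending $L(\alpha|0)$ to a simple module whose highest weight lies in the even coordinates. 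The interior weights of $\mathrm{Supp}(\alpha)$, interpolating between these two, are the crux: over $\mathbb C$ they form the whole ``hook interval'' (the support of the standard module $\Delta(\alpha|0)$, whose character is the hook Schur function of $\alpha$), but the simple quotient of $\Delta(\alpha|0)$ is smaller in characteristic $p$, and it is this shrinkage that produces the three-case answer.

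The central step would be a combinatorial criterion: $\mathrm{Supp}(\alpha)\cap\Lambda(m|n,r)\ne\emptyset$ if and only if $\alpha$ fits into the region $R_s$ for some $s$ with $1\le s\le\min(p/2,n+1)$, where $R_s$ is the ``staircase'' region whose largest partition has exactly $t^s_p(m,n)$ boxes. For the implication needed in part (i) I would construct the required weight $(\lambda,\mu)$ explicitly: keep the first $m$ rows of $\alpha$ as $\lambda$, transpose the remaining rows into the odd coordinates using isotropic lowering operators of $\mathfrak{sl}(1|1)$-type (and a divided-power / Frobenius move once a column of height $p$ has been used up), and then verify that the resulting weight vector is nonzero in $L(\alpha|0)$ and not merely in $\Delta(\alpha|0)$. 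Here Lemma~\ref{L<p}, giving $\alpha_{m+1}\le\min(p-1,n)$ whenever $\alpha\vdash t$, is precisely what keeps $\mu$ within $n$ parts and rules out a divided-power obstruction. The reverse implication would amount to showing that if $\alpha$ fits no $R_s$ then every weight of $\Delta(\alpha|0)$ lying in $\Lambda(m|n,r)$ gets killed in the simple quotient.

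Granting the criterion, part (i) is immediate: for $\alpha\vdash t$, the identity $t=\min_s t^s_p(m,n)$ together with Lemma~\ref{L<p} forces $\alpha$ into $R_{s_0}$ for the minimizing $s_0$. For part (ii) I would take $\alpha$ to be the extremal partition of size $t^{s_0}_p(m,n)+1$ at the minimizing $s_0$. In the boundary cases this reproduces familiar witnesses: $\alpha=(1^{t+1})$ when $d<2$ (so $s_0=1$) and $\alpha=((n+1)^{m+1})$ when $d>n$ (so $s_0=n+1$), for which the fact that $V_{m|n}^{\otimes(t+1)}$ omits the projective $Y^{\alpha|0}$ is already contained in the proof of Lemma~\ref{LUpper} via the symmetrizers used there; in the remaining range $2\le d\le n$, $\alpha$ is the corresponding staircase partition of that size. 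One then checks that $\alpha$ is $p$-restricted --- here $p>2$ and the inequalities defining the relevant case are what is used --- and that $\alpha$ overflows every $R_s$, so by the criterion $\mathrm{Supp}(\alpha)\cap\Lambda(m|n,t+1)=\emptyset$.

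The main obstacle is the combinatorial criterion, and within it the ``sufficiency'' direction: showing that the explicitly constructed weight vectors survive in the simple quotient of $\Delta(\alpha|0)$. This is where the characteristic-$p$ representation theory of the Schur superalgebra is genuinely needed --- the structure of symmetric, exterior and divided powers of a superspace, Frobenius twists, and the odd-reflection description of highest weights. Once the criterion is available, the remaining work (the case analysis and the matching with the three-case formula for $t_p(m,n)$) is bookkeeping of the same flavour as the equivalence of Theorems~\ref{TMain0} and~\ref{TMain} carried out above.
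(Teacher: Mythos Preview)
Your outline has the right skeleton for part~(i) --- apply odd lowering operators to the rows below row~$m$ --- but what you have written is a plan, not a proof: you explicitly defer the one genuinely hard step, namely showing that the resulting weight vector is nonzero in the \emph{simple} module $L(\alpha|0)$ rather than only in $\Delta(\alpha|0)$. The paper does not route this through any general ``combinatorial criterion'' or through Frobenius twists and divided powers. Instead it writes down an explicit product $Y$ of odd root vectors $E_{i',j}$, together with a matching raising operator $X$, and computes directly that $XYv_+=c\,v_+$ with $c\in\k^\times$. The indexing is organised so that on each block of rows the scalar factors are positive integers strictly less than $p$ (this is the condition $\alpha_{a_j}+a_{j+1}-1-a_j<p$), hence nonzero in $\k$; a separate combinatorial lemma shows that the total number of odd coordinates used is at most $n$, which is exactly where Lemma~\ref{L<p} enters. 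No ``Frobenius move once a column of height $p$ is used up'' ever occurs: only products of (square-zero) odd root vectors are used, and the recursive blocking is precisely what prevents any scalar from hitting $p$.

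For part~(ii) your plan has a more serious gap. You propose to deduce vanishing from the \emph{reverse} implication of your criterion --- ``if $\alpha$ fits no $R_s$ then every weight of $\Delta(\alpha|0)$ in $\Lambda(m|n,r)$ is killed in the simple quotient'' --- but you give no argument for this, and it is much stronger than what is needed. The paper avoids this entirely: for each $1\le s\le\min(p-1,n+1)$ it writes down an explicit $p$-restricted partition $\alpha_{(n,s)}\vdash t^s_p(m,n)+1$ and computes, via iterated odd reflections, the highest weight of $L(\alpha_{(n,s)}|0)$ with respect to the non-standard Borel $B_{m,n}$. That highest weight turns out to be $(s^m,1\,|\,(p-s)^{n-s+1},1^{s-1})$, i.e.\ it has a strictly positive entry in position $\varepsilon_{m+1}$ and zeros beyond; hence no weight in $\Lambda(m|n)$ lies below it for this Borel, and the vanishing follows immediately. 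You mention odd reflections only in passing among a list of tools; in the paper it is the entire mechanism for~(ii), and it bypasses the need for any general support criterion.
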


Note that Theorem~\ref{TMain} would follow from the weaker statement than Proposition~\ref{PMainOld}, namely, we could assume in \ref{PMainOld}(i) that $\alpha$ is $p$-restricted, but this assumption happens to be unnecessary. 

The proof of the first part of this proposition will be given in \S\ref{SSMainProof} and the proof of the second part will be given in \S\ref{SSKevinCounterExamples}.

\begin{remark} 
In \cite{Sup}, Irina Suprunenko proved that for the irreducible $GL(n)$-module $L(\lambda)$ with {\em $p$-restricted}\, highest weight $\lambda$ and any weight $\mu$, 
the weight space $L(\lambda)_\mu$ is non-trivial in characteristic $p$ if and only if it is non-trivial in characteristic $0$. In other words, while the weight multiplicities can be much smaller in characteristic $p$ than in characteristic $0$, weights do not `disappear', as longs as the highest weight is $p$-restricted. (A similar result for other reductive algebraic groups was proved by Premet \cite{Pr}). 

Proposition~\ref{PMainOld} can be considered as a development of the work of Irina Suprunenko to the case of supergroups. Part (ii) of the proposition shows that the statement analogous to that of Suprunenko's theorem in general fails for $GL(m|n)$, while part (i) of the proposition shows that it holds in a certain range. Our proof of part (i) using lowering operators is inspired by Suprunenko's techniques.
\end{remark}

\subsection{Lower bound}
\label{SSMainProof}
In this section we prove Proposition~\ref{PMainOld}(i). Let $\alpha\in\Lambda^+(t)$. We set $L:=L(\alpha|0)$ and denote by $v_+$ a highest weight vector of $L$. We will act on $v_+$ with appropriate `lowering operators' to get a non-zero vector in the required weight space $L_{\lambda|\mu}$. 

\subsubsection{}\label{SchurGL} As explained in \cite[p.25]{BK}, the category of supermodules over the Schur algebra $S(t|t,t)$ is isomorphic to the category of degree $t$ polynomial representations of the supergroup $GL(t|t)$, which in turn can be considered as a subcategory of the  category of integrable representations over the distribution superalgebra $\operatorname{Dist}(GL(t|t))$, see \cite[Corollary 3.5]{BK}. Under this identification, the weight spaces as defined in Section~\ref{SecDon} correspond to the $GL(t|t)$-weight spaces, via the embedding in~\ref{weights}.

 Moreover, by \cite[Theorem 3.2]{BK}, the distribution superalgebra $\operatorname{Dist}(GL(t|t))$ is isomorphic to the hyperalgebra $U_\k$, which is obtained by extending scalars from $\mZ$ to $\k$ in the Kostant $\mZ$-form $U(\mathfrak{gl}(t|t,\mC))_\mZ$ of the universal enveloping superalgebra $U(\mathfrak{gl}(t|t,\mC))$. 

The explicit description of the hyperalgebra $U_\k$ is given in \cite[\S3]{BK}---from it we will only need the basis elements of $\mathfrak{gl}(t|t)\subset U_{\k}$
$$\{E_{i,j},E_{i',j'},E_{i',j},E_{i,j'}\mid 1\leq i,j\leq t\}$$
from~\ref{basis}, which satisfy the commutation relations \cite[(3.2)]{BK}. It will be especially important for us to use the commuting relation 
\begin{equation}\label{ECommE}
E_{j,i'}E_{i',j}=E_{j,j}+E_{i',i'}-E_{i',j}E_{j,i'}
\end{equation}
(recall that $E_{i',j},E_{j,i'}$ are odd), and the fact that for all $(\beta,\gamma)\in\Lambda(t|t,t)$ we have 
\begin{equation}\label{EWtAction}
E_{i',j}L_{\beta|\gamma}\subseteq L_{\beta-\epsilon_j|\gamma+\epsilon_i}.
\end{equation} 
Finally, since $E_{i',j}$ is odd and $[E_{i',j},E_{i',j}]=0$, we have in $U_\k$:
\begin{equation}\label{EOddSquare}
E_{i',j}^2=0.
\end{equation}

\begin{lemma} \label{L'} We have  $E_{i',j'}v_+=0$ for all $1\leq i,j\leq t$.
\end{lemma}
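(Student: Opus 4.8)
The claim is that the highest weight vector $v_+$ of $L = L(\alpha|0)$ is killed by all the purely odd-to-odd raising operators $E_{i',j'}$. The plan is to use the weight-space grading together with the fact that $L(\alpha|0)$ is a polynomial representation of degree $t$ supported, as a $GL(t|t)$-module, on weights of the form $(\beta,p\gamma)$ for $(\beta,\gamma)$ running over $p$-restricted data—but more simply, on the fact that $v_+$ has weight $(\alpha,0)$ with $\alpha\in\Lambda^+(t)$, so that \emph{all} the ``even'' coordinates of $v_+$ add up to $t$ and the ``odd'' coordinates are zero. Since $E_{i',j'}$ shifts the $GL(t)$-part of the weight down by $\epsilon_j$ and the $GL(n)$-part up by $\epsilon_i$ (the odd-odd analogue of \eqref{EWtAction}), the vector $E_{i',j'}v_+$ would have weight $(\alpha - \epsilon_j \mid \epsilon_i)$.

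First I would observe that this putative weight $(\alpha-\epsilon_j\mid\epsilon_i)$ cannot occur in $L(\alpha|0)$ because $(\alpha,0)$ is the highest weight and $L(\alpha|0)$ is a highest-weight module: every weight $\nu$ of $L(\alpha|0)$ satisfies $\nu \le (\alpha,0)$ in the dominance order on $\Lambda(t|t,t)\hookrightarrow X(T)$ coming from \S\ref{weights}. But $(\alpha-\epsilon_j\mid \epsilon_i)$ is \emph{not} $\le (\alpha\mid 0)$: passing to the identification with $\Lambda(t+t)$, the total weight has an odd coordinate equal to $1$ while $(\alpha,0)$ has all its odd coordinates equal to $0$, and the dominance order compares partial sums. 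Concretely, the partial sum of $(\alpha-\epsilon_j\mid\epsilon_i)$ up through the first $t+i$ coordinates exceeds that of $(\alpha\mid 0)$, which is $|\alpha| = t$, because the odd entry $1$ has been added while only $1$ was removed from an even coordinate—so the partial sum is $t-1+1 = t$ at some point where $(\alpha\mid 0)$'s partial sum is strictly less than $t$ (since $\alpha$ has at most $t$ nonzero parts summing to $t$, and in fact generically fewer). I would need to handle the edge case where $\alpha = (1^t)$ carefully, but even then removing one box from row $j$ and adding an odd box in position $i$ produces a weight that is incomparable to, and in particular not dominated by, $(1^t \mid 0)$ in the $\Lambda(t|t,t)$ dominance order—because the odd part is now nonzero.

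Therefore $E_{i',j'}v_+$ lies in a zero weight space of $L(\alpha|0)$ and hence is zero, proving the lemma. Alternatively, and perhaps more cleanly, I would argue via the $GL(t|t)$-polynomial structure: $L(\alpha|0)$ is a polynomial representation of total degree $t$, meaning every weight $(\beta\mid\gamma)$ occurring has $\beta,\gamma \ge 0$ and $|\beta|+|\gamma| = t$; and since the highest weight has $\gamma$-part $0$, a standard argument (using that $L(\alpha|0)$ is generated by $v_+$ under the negative/lowering operators, all of which either preserve or increase $|\gamma|$) shows the weight $(\beta\mid\gamma)$ occurs only when... actually the cleanest route remains the dominance one. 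The main obstacle I anticipate is getting the dominance-order bookkeeping exactly right when identifying $\Lambda(t|t,t)$ with $\Lambda(2t,t)$ and checking that adding an odd box strictly violates a partial-sum inequality; this is a short combinatorial check but must be done with care about which coordinates of $X(T)\simeq\mZ^{2t}$ are indexed by even versus odd basis vectors. Everything else—the odd-odd weight-shift rule analogous to \eqref{EWtAction}, and the fact that $v_+$ generates $L(\alpha|0)$ as a highest weight module—follows from the hyperalgebra description in \cite{BK} recalled in \S\ref{SchurGL}.
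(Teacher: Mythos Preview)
Your weight computation is wrong, and this breaks the argument. The element $E_{i',j'}$ has \emph{both} indices odd: it sends $\sv_j'\mapsto \sv_i'$ and kills the even basis vectors. Hence on a vector of weight $(\beta\mid\gamma)$ it produces weight $(\beta\mid\gamma+\epsilon_i-\epsilon_j)$, not $(\beta-\epsilon_j\mid\gamma+\epsilon_i)$; you have confused $E_{i',j'}$ with the odd element $E_{i',j}$ of \eqref{EWtAction}. So $E_{i',j'}v_+$ has weight $(\alpha\mid\epsilon_i-\epsilon_j)$, not $(\alpha-\epsilon_j\mid\epsilon_i)$.

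With the correct weight, your dominance strategy does not suffice. When $i>j$ the weight $(\alpha\mid\epsilon_i-\epsilon_j)$ \emph{is} dominated by $(\alpha\mid 0)$ (the partial sums never exceed those of $(\alpha\mid 0)$), so highest-weight theory alone says nothing; and when $i=j$ the weight is unchanged, so dominance is vacuous. The paper's proof instead splits into two cases: for $i=j$ the operator $E_{i',i'}$ is a Cartan element whose eigenvalue on $v_+$ is the $i'$-th coordinate of $(\alpha\mid 0)$, namely $0$; for $i\neq j$ the weight $(\alpha\mid\epsilon_i-\epsilon_j)$ has a \emph{negative} entry (at position $j'$), hence cannot occur in any polynomial representation (such weights do not appear in $V^{\otimes r}$). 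This is the one-line observation you allude to at the end but never actually use, and it is what is needed.
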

\begin{proof}
 If $i'=j'$ then $E_{i',i'}v_+=0$ since $v_+$ has weight $(\alpha|0)$. Otherwise, since the weight $(\alpha|\epsilon_i-\epsilon_j)$ does not appear in polynomial representations (as it does not appear in tensor powers of $V$), so it follows that $E_{i',j'}v_+=0$.
\end{proof}

Let $\alpha$ have $h$ non-zero parts, i.e. $\alpha=(\alpha_1,\dots,\alpha_h)$ with $\alpha_1\geq\dots\geq\alpha_h>0$. If $h\leq m$ we can take $(\lambda,\mu)=(\alpha|0)$ in Proposition~\ref{PMainOld}(i). So we assume from now on that $h>m$. We identify the partition $\alpha$ with its Young diagram, which consists of the {\em boxes} $(a,b)\in \mZ\times \mZ$ satisfying $1\leq a\leq h$ and $1\leq b\leq \alpha_a$. 

Given integers $c,d$ such that $m<c\leq d\leq h$, 
we consider the subset 
$$
\alpha_{[c,d]}:=\{(a,b)\in\alpha\mid c\leq a\leq d\}\subset \alpha
$$
of the boxes of the Young diagram $\alpha$ in the rows between $c$ and $d$. 
Fix an integer $e$ such that $\alpha_c\le e\le t$, or in other words
\begin{equation}\label{EAssumption}
\{e-\alpha_c+1,e-\alpha_c+2,\dots,e\}\subseteq \{1,\dots,t\}.
\end{equation} 
For every box $(a,b)\in \alpha_{[c,d]}$ we set 
$$i_{e;a,b}:=e-\alpha_a+b.$$
We note that 
$$1\le i_{e;a,b}\le e\le t$$ for all $(a,b)\in \alpha_{[c,d]}$.

For each integer $a$ satisfying $c\leq a\leq d$, we 
consider the following elements of $U_\k$ and $L$:
\begin{eqnarray*}
Y^{(e)}_{a}&:=&E_{i_{e;a,1}',a}E_{i_{e;a,2}',a}\cdots  E_{i_{e;a,\alpha_a}',a}
\\
X^{(e)}_{a}&:=&E_{a,i_{e;a,\alpha_a}'}E_{a,i_{e;a,\alpha_a-1}'}\cdots E_{a,i_{e;a,1}'},
\end{eqnarray*}
and 
\begin{eqnarray}\label{EXYLoc1}
Y^{(e)}_{[a,d]}&:=&Y^{(e)}_aY^{(e)}_{a+1}\cdots Y^{(e)}_d,
\\
X^{(e)}_{[a,d]}&:=&X^{(e)}_dX^{(e)}_{d-1}\cdots X^{(e)}_a,
\label{EXYLoc2}
\\ 
v_a&:=& Y^{(e)}_{[a,d]}v_+.\label{EXYLoc3}
\end{eqnarray}
By convention, we set $v_{d+1}:=v_+.$
Note that the distinct (odd) elements $E_{i_{e;a,b}',a}$ appearing in the product $Y^{(e)}_{[c,d]}$ (resp. $X^{(e)}_{[c,d]}$) anti-commute, so without specifying the order of the product, we can simply write  
$$
Y^{(e)}_{[c,d]}=\pm\prod_{(a,b)\in \alpha_{[c,d]}}E_{i_{e;a,b}',a},\;\mbox{and }\;\;
X^{(e)}_{[c,d]}=\pm\prod_{(a,b)\in \alpha_{[c,d]}}E_{a,i_{e;a,b}'}.
$$

\begin{example}\label{Ex2}
Consider the case $p=5$, $m=n=7$ and $\alpha=(3^{10},1^3)$, so $h=13$. By Example~\ref{EEx1}, we have $t=33$, so $\alpha$ is a partition of $t$. 

(i) Suppose $c=8$, $d=9$, and $e=3$. Then 
$$
Y_{[8,9]}^{(3)}=E_{1',8}E_{2',8}E_{3',8}E_{1',9}E_{2',9}E_{3',9}.
$$ 

(ii) Suppose $c=10$, $d=11$, and $e=6$. Then 
$$
Y_{[10,11]}^{(6)}=E_{4',10}E_{5',10}E_{6',10}E_{6',11}.
$$ 

(ii) Suppose $c=12$, $d=13$, and $e=7$. Then 
$$
Y_{[10,11]}^{(7)}=E_{7',12}E_{7',13}.
$$

If in each case we insert $i_{e;a,b}$ into the box $(a,b)$ of the Young diagram, the cases (i),(ii),(iii) are illustrated by the following pictures:
\vspace{3mm}
$$
\begin{ytableau}
\, &  &    \\ 
 &  &   \\
 &  &   \\
 &  &  \\
 &  &  \\ 
 &  & \\
 &  & \\
1  & 2 & 3\\
1   &2  & 3\\
   & &  \\
   \\
  \\
  \\
\end{ytableau}
\qquad\qquad
\begin{ytableau}
\, &  &    \\ 
 &  &   \\
 &  &   \\
 &  &  \\
 &  &  \\ 
 &  & \\
 &  & \\
  &  & \\
   &  & \\
 4   & 5 & 6\\
  6  \\
    \\
    \\
\end{ytableau}
\qquad \qquad
\begin{ytableau}
\, &  &    \\ 
 &  &   \\
 &  &   \\
 &  &  \\
 &  &  \\ 
 &  & \\
 &  & \\
  &  & \\
   &  & \\
    &  & \\
    \\
7    \\
 7   \\
\end{ytableau}
$$
\vspace{5mm}
\end{example}

\begin{lemma}\label{LWeightcd}
Suppose that $Y^{(e)}_{[c,d]}v_+\neq 0$. Then $Y^{(e)}_{[c,d]}v_+$ is a weight vector of weight 
$$
(\alpha-\sum_{b=c}^d\alpha_b\epsilon_b|\sum_{(a,b)\in \alpha_{[c,d]}}\epsilon_{i_{e;a,b}}).
$$
\end{lemma}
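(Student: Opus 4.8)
The plan is to compute the weight of $Y^{(e)}_{[c,d]}v_+$ directly from the definitions, using the fact that $v_+$ has weight $(\alpha|0)$ and that each odd operator $E_{i',j}$ shifts weights in a controlled way. The basic input is the weight-shift rule \eqref{EWtAction}: applying $E_{i',j}$ to a vector of weight $(\beta|\gamma)$ produces a vector of weight $(\beta-\epsilon_j|\gamma+\epsilon_i)$. So the entire computation reduces to bookkeeping: track which operators $E_{i'_{e;a,b},a}$ occur in the product $Y^{(e)}_{[c,d]}=\pm\prod_{(a,b)\in\alpha_{[c,d]}}E_{i'_{e;a,b},a}$ and sum up the corresponding weight shifts.

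First I would recall that, by the anti-commutation remark preceding the lemma, the product $Y^{(e)}_{[c,d]}$ equals (up to sign) the unordered product over all boxes $(a,b)\in\alpha_{[c,d]}$ of the operators $E_{i'_{e;a,b},a}$, so the order in which we apply them is irrelevant for the weight computation. Applying these operators one at a time to $v_+$ (assuming the result is nonzero, as hypothesised), the even part of the weight starts at $\alpha$ and is decreased by $\epsilon_a$ once for each box in row $a$ of $\alpha_{[c,d]}$; since row $a$ of $\alpha_{[c,d]}$ (for $c\le a\le d$) has exactly $\alpha_a$ boxes, the even part drops by $\alpha_a\epsilon_a$ for each $a$ between $c$ and $d$, giving $\alpha-\sum_{b=c}^d\alpha_b\epsilon_b$. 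Simultaneously, the odd part starts at $0$ and is increased by $\epsilon_{i_{e;a,b}}$ once for each box $(a,b)\in\alpha_{[c,d]}$, giving $\sum_{(a,b)\in\alpha_{[c,d]}}\epsilon_{i_{e;a,b}}$. This is exactly the claimed weight.

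I should also note that the hypothesis $Y^{(e)}_{[c,d]}v_+\neq 0$ guarantees that we never apply $E_{i',j}^2$ to the same pair and get forced to zero by \eqref{EOddSquare}, and more importantly that the intermediate vectors are genuinely in the indicated weight spaces rather than vacuously zero; formally one applies \eqref{EWtAction} inductively to the partial products $v_a=Y^{(e)}_{[a,d]}v_+$ defined in \eqref{EXYLoc3}, using $v_{d+1}=v_+$ of weight $(\alpha|0)$ as the base case.

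The proof is essentially a routine index computation, so there is no real obstacle; the only point requiring a moment's care is confirming that the $i_{e;a,b}=e-\alpha_a+b$ assigned to boxes in a single row $a$ (as $b$ ranges over $1,\dots,\alpha_a$) are distinct, so that within row $a$ the operators indeed anti-commute and \eqref{EOddSquare} is not triggered; this is immediate since $b\mapsto e-\alpha_a+b$ is injective, and the values lie in $\{1,\dots,t\}$ by \eqref{EAssumption}. Hence the weight is as claimed.
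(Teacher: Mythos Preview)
Your proof is correct and follows exactly the same approach as the paper, which simply says ``This is clear from \eqref{EWtAction}.'' Your final paragraph about distinctness of the $i_{e;a,b}$ within a row is unnecessary for the weight computation itself (the inductive application of \eqref{EWtAction} already gives the weight, and the hypothesis $Y^{(e)}_{[c,d]}v_+\neq 0$ is all that is needed to conclude the vector is a genuine weight vector), but it does no harm.
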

\begin{proof}
This is clear from (\ref{EWtAction}). 
\end{proof}

For every $(a,b)\in \alpha_{[c,d]}$ we set 
$$
f_{a,b}:=b+\sharp\{(a_1,b_1)\in\alpha_{[c,d]}\mid a_1>a\ \text{and}\ i_{e;a_1,b_1}=i_{e;a,b}\},
$$
considered as a positive integer or as an element of $\k$ depending on the context.

\begin{lemma} \label{LGoodLower}
We have $$X^{(e)}_{[c,d]}Y^{(e)}_{[c,d]}v_+=\bigg( \prod_{(a,b)\in \alpha_{[c,d]}} f_{a,b}\bigg) v_+.$$ In particular, 
if $\alpha_{c}+d-c<p$ then $Y^{(e)}_{[c,d]}v_+\neq 0$.
\end{lemma}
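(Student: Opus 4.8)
The plan is to compute $X^{(e)}_{[c,d]}Y^{(e)}_{[c,d]}v_+$ by moving the odd raising operators $E_{a,i'}$ (forming $X^{(e)}_{[c,d]}$) past the odd lowering operators $E_{i',a}$ (forming $Y^{(e)}_{[c,d]}$) one at a time, from the inside out, keeping track of the scalars that appear when a raising operator meets ``its own'' lowering operator. The bookkeeping device is \eqref{ECommE}: when $E_{a,i'}$ is pulled to the right past $E_{i',a}$ one produces the term $E_{a,a}+E_{i',i'}-E_{i',a}E_{a,i'}$, whereas when $E_{a,i'}$ meets a lowering operator $E_{i_1',a_1}$ with $i_1'\neq i'$ or $a_1\neq a$ the two simply (anti-)commute up to sign by the relevant case of \cite[(3.2)]{BK}, since then $[E_{a,i'},E_{i_1',a_1}]$ lands in a root vector that, together with the weight constraints \eqref{EWtAction} and the observation in Lemma~\ref{L'} (weights $(\alpha|\epsilon_i-\epsilon_j)$ do not occur), will annihilate what remains. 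Because all the $E_{i_{e;a,b}',a}$ with $(a,b)\in\alpha_{[c,d]}$ anti-commute among themselves and likewise for the $E_{a,i_{e;a,b}'}$ (as noted right after \eqref{EXYLoc3}), and since $E_{i',a}^2=0$ by \eqref{EOddSquare}, the sign ambiguities organize themselves so that $X^{(e)}_{[c,d]}Y^{(e)}_{[c,d]}v_+$ is a scalar multiple of $v_+$.

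To identify that scalar, I would process the columns of $\alpha_{[c,d]}$ independently: the raising operator $E_{a,i'}$ with $i=i_{e;a,b}$ can only interact nontrivially with a lowering operator $E_{i_1',a_1}$ for which $i_{e;a_1,b_1}=i=i_{e;a,b}$, i.e. with boxes $(a_1,b_1)$ in the same ``diagonal'' labelled by $i$. Fixing such a value $i$ and listing the rows $a$ that contribute a box with label $i$ in increasing order $a^{(1)}<a^{(2)}<\cdots$, each step of the reduction uses \eqref{ECommE} in the form $E_{a,i'}E_{i',a}v = (E_{a,a}+E_{i',i'})v$ on a vector $v$ of the appropriate weight (the term $E_{i',a}E_{a,i'}$ being killed because $E_{a,i'}$ raises into a weight whose first block has already been partly restored and, after the substitution, $E_{i',a}$ acts on a vector whose $i'$-entry in the odd block forces it to vanish, using \eqref{EWtAction} together with $E_{i',a}^2=0$ and the weight analysis as in Lemma~\ref{L'}). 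Evaluating $E_{a,a}+E_{i',i'}$ on the relevant intermediate weight vector $v_{a+1}$-type vector returns exactly the count $b+\sharp\{(a_1,b_1)\in\alpha_{[c,d]}\mid a_1>a,\ i_{e;a_1,b_1}=i_{e;a,b}\}=f_{a,b}$: the summand $b$ comes from the $\varepsilon_a$-coordinate of $\alpha$ (which is $\alpha_a\ge b$; more precisely the relevant raising/lowering pair sits in column $b$, contributing $b$ entries of $\sv_a$ not yet removed), and the correction term counts the previously applied lowering operators on that same diagonal sitting in later rows, which have deposited entries in the $\varepsilon'_{i_{e;a,b}}$ slot. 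Multiplying over all boxes gives the stated product $\prod_{(a,b)\in\alpha_{[c,d]}}f_{a,b}$.

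For the last assertion, observe that $f_{a,b}\le \alpha_c + (d-c)$ for every $(a,b)\in\alpha_{[c,d]}$: indeed $b\le \alpha_a\le\alpha_c$, and the number of rows $a_1\in\{c,\dots,d\}$ with $a_1>a$ is at most $d-c$, so the corrective count is at most $d-c$ (in fact it is bounded by $d-a\le d-c$). Hence if $\alpha_c+d-c<p$ then each $f_{a,b}$ is a nonzero element of $\k$, so their product is nonzero, and therefore $Y^{(e)}_{[c,d]}v_+\neq 0$, since otherwise $X^{(e)}_{[c,d]}Y^{(e)}_{[c,d]}v_+$ would be zero. The main obstacle I anticipate is the careful justification that all the ``cross terms'' $E_{i_1',a_1}E_{a,i'}$ arising during the reordering actually annihilate the vector they are applied to: this requires combining the weight-shift rule \eqref{EWtAction}, the vanishing $E_{i',a}^2=0$, and the non-occurrence of weights of the form $(\alpha|\epsilon_i-\epsilon_j)$ (Lemma~\ref{L'}), and keeping the signs consistent throughout; once that is in place the scalar computation is the routine combinatorics described above.
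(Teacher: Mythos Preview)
Your overall strategy—pull the raising operators past the lowering operators via \eqref{ECommE} and collect the resulting scalars—is exactly the paper's, and your identification of $f_{a,b}$ as the eigenvalue of $E_{a,a}+E_{i',i'}$ on the appropriate intermediate vector is correct. But two of your vanishing claims for the cross terms are not justified by the ingredients you name, and this is where the actual content of the argument lies.

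First, it is not true that $E_{a,i'}$ ``can only interact nontrivially'' with lowering operators carrying the same label $i$. Within row $a$ the labels $i_{e;a,1},\dots,i_{e;a,\alpha_a}$ are all distinct, and the supercommutator $[E_{a,i_{e;a,b}'},E_{i_{e;a,k}',a}]=\pm E_{i_{e;a,k}',i_{e;a,b}'}$ for $k>b$ is a genuine even operator that does not vanish for weight reasons on the spot. The paper disposes of it by proving separately that $E_{i_{e;a,b_2}',i_{e;a,b_1}'}v_{a+1}=0$ for $b_1<b_2$: one commutes this element to the right through $Y^{(e)}_{a+1}\cdots Y^{(e)}_d$, landing on $v_+$ where Lemma~\ref{L'} applies, and each commutator produced en route is an odd root vector $E_{i_{e;a,b_2}',a_1}$ that already occurs as a factor of $Y^{(e)}_{a_1}$, so the term dies by \eqref{EOddSquare}. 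Your ``diagonals'' picture hides this interaction; the paper's row-by-row organisation (reduce $X^{(e)}_aY^{(e)}_a v_{a+1}$ to a scalar times $v_{a+1}$, then iterate over $a$) is what keeps the bookkeeping under control.

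Second, your justification for the vanishing of the residual term $-E_{i',a}E_{a,i'}$ does not work: the weight of $v_{a+1}$ is generally nonzero in the $\varepsilon'_i$ slot, so neither a weight argument, nor \eqref{EOddSquare}, nor Lemma~\ref{L'} applies directly. What is actually needed is $E_{a,j'}\,v_{a+1}=0$ for all $j$, and this uses a different mechanism: commuting $E_{a,j'}$ through $Y^{(e)}_{a+1}\cdots Y^{(e)}_d$ produces only \emph{even} positive root vectors $E_{a,a_1}$ with $a_1>a$, which commute with every remaining factor and annihilate the highest weight vector $v_+$. This appeal to the even positive part of $\mathfrak{gl}(t|t)$ is the ingredient missing from your sketch.
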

\begin{proof}
Note that  for every $(a,b)\in \alpha_{[c,d]}$ we have 
$$
f_{a,b}\leq f_{c,\alpha_c}=\alpha_c+d-c<p
$$
by assumption. So, considered as an element of $\k$, every integer $f_{a,b}$ is non-zero. Therefore it suffices to prove the first claim.  Recall the notation (\ref{EXYLoc3}).  
To prove the first claim, it is sufficient to prove that for any $(a,b)\in \alpha_{[c,d]}$ we have 
\begin{equation}\label{EMainComm}
E_{a,i_{e;a,b}'}E_{i_{e;a,b}',a}E_{i_{e;a,b+1}',a}\cdots E_{i_{e;a,\alpha_a}',a}v_{a+1}=f_{a,b}E_{i_{e;a,b+1}',a}\cdots E_{i_{e;a,\alpha_a}',a}v_{a+1}.
\end{equation}
%Indeed, we will be applying the terms $E_{a,i_{e;a,b}'}$ appearing in $X^{(e)}_{[c,d]}$ along the rows $c,c+1,\dots,d$ from left to right and use (\ref{EMainComm}) to get the result. 

In this paragraph we prove that for any $c\leq a\leq d$ and $1\leq j\leq t$, we have 
\begin{equation}\label{EZero}
E_{a,j'}v_{a+1}=0.
\end{equation}
Indeed, note that $E_{a,j'}v_+=0$ since $v_+$ is a highest weight vector. 
Moreover, note that $Y_{a+1}^{(e)}\cdots Y_d^{(e)}$ is a product of $E_{i_{e;a_1,b}',a_1}$ with $a_1>a$, so 
$E_{a,j'}$ supercommutes with such a factor unless $i_{e;a_1,b}=j$. For such exceptional factors 
$E_{i_{e;a_1,b}',a_1}$, we have 
$$
E_{a,j'}E_{i_{e;a_1,b}',a_1}=-E_{i_{e;a_1,b}',a_1}E_{a,j'}+E_{a,a_1}.
$$
Now $E_{a,a_1}$ commutes with every term appearing in $Y_{a+1}^{(e)}\cdots Y_d^{(e)}$, and $E_{a,a_1}v_+=0$ since $v_+$ is a highest weight vector and $a<a_1$. So we can supercommute $E_{a,j'}$ all the way to $v_+$ to get 
$$
E_{a,j'}v_{a+1}=\pm Y_{a+1}^{(e)}\cdots Y_d^{(e)} E_{a,j'}v_+=0,
$$
completing the proof of (\ref{EZero}). 

In this paragraph we prove that for any $c\leq a\leq d$ and $1\leq b_1<b_2\leq \alpha_a$, we have 
\begin{equation}\label{EClever}
E_{i_{e;a,b_2}',i_{e;a,b_1}'}v_{a+1}=0.
\end{equation}
Indeed, 
$E_{i_{e;a,b_2}',i_{e;a,b_1}'}v_{a+1}=E_{i_{e;a,b_2}',i_{e;a,b_1}'}Y_{a+1}^{(e)}\cdots Y_d^{(e)} v_+$. If we commute $E_{i_{e;a,b_2}',i_{e;a,b_1}'}$ all the way past $Y_{a+1}^{(e)}\cdots Y_d^{(e)}$, we get zero since $E_{i_{e;a,b_2}',i_{e;a,b_1}'}v_+=0$ by Lemma~\ref{L'}. On the other hand, $E_{i_{e;a,b_2}',i_{e;a,b_1}'}$ will produce a non-trivial commutator with the factor $E_{i_{e;a_1,b}',a_1}$ appearing in $Y_{a+1}^{(e)}\cdots Y_d^{(e)}$ only if $i_{e;a,b_1}=i_{e;a_1,b}$, in which case the commutator equals $E_{i_{e;a,b_2}',a_1}$. 
Since $b_1<b_2$, we have that 
$i_{e;a_1,b}=i_{e;a,b_1}<i_{e;a,b_2}\leq e$ so
$E_{i_{e;a,b_2}',a_1}$ also appears in the product $Y^{(e)}_{a_1}$ in the guise $E_{i_{e;a_1,b}',a_1}$. This means that the commutator term will contain $E_{i_{e;a,b_2}',a_1}^2$, hence it is zero by (\ref{EOddSquare}). 

We now prove (\ref{EMainComm}). Note that (\ref{ECommE}) implies that
\begin{align*}
&E_{a,i_{e;a,b}'}E_{i_{e;a,b}',a}E_{i_{e;a,b+1}',a}\cdots E_{i_{e;a,\alpha_a}',a}v_{a+1}
\\=\,\,
&(E_{a,a}+E_{i_{e;a,b}',i_{e;a,b}'})E_{i_{e;a,b+1}',a}\cdots E_{i_{e;a,\alpha_a}',a}v_{a+1}
\\
&+\sum_{k=b+1}^{\alpha_a}\pm E_{i_{e;a,b}',a}E_{i_{e;a,b+1}',a}\cdots E_{i_{e;a,k-1}',a}E_{i_{e;a,k}',i_{e;a,b}'}
E_{i_{e;a,k+1}',a}\cdots E_{i_{e;a,\alpha_a}',a}v_{a+1}
\\
&\pm  E_{i_{e;a,b}',a}E_{i_{e;a,b+1}',a}\cdots E_{i_{e;a,\alpha_a}',a}E_{a,i_{e;a,b}'}v_{a+1}.
\end{align*}
We denote the three summands in the the right hand side by $\mathbf{S}_1$, $\mathbf{S}_2$ and $\mathbf{S}_3$, respectively. Note that $\mathbf{S}_3=0$ by (\ref{EZero}). 

Furthermore, we have that 
$$
\mathbf{S}_1=f_{a,b}E_{i_{e;a,b+1}',a}\cdots E_{i_{e;a,\alpha_a}',a}v_{a+1}.
$$
Indeed, it is easy to see that 
$$
E_{a,a}E_{i_{e;a,b+1}',a}\cdots E_{i_{e;a,\alpha_a}',a}v_{a+1}
=b\,E_{i_{e;a,b+1}',a}\cdots E_{i_{e;a,\alpha_a}',a}v_{a+1}
$$
and 
$$
E_{i_{e;a,b}',i_{e;a,b}'}E_{i_{e;a,b+1}',a}\cdots E_{i_{e;a,\alpha_a}',a}v_{a+1}=
C\,
E_{i_{e;a,b+1}',a}\cdots E_{i_{e;a,\alpha_a}',a}v_{a+1}
$$
where 
$$C=\sharp\{(a_1,b_1)\in\alpha_{[c,d]}\mid a_1>a\ \text{and}\ i_{e;a_1,b_1}=i_{e;a,b}\}$$ using the fact that by definition we have $i_{e;a,b}\not\in\{i_{e;a,b+1},\cdots ,i_{e;a,\alpha_a}\}$. 

It remains to prove that every summand in $\mathbf{S}_2$ is zero. Let $b+1\leq k\leq \alpha_a$. Then 
\begin{align*}
&E_{i_{e;a,b+1}',a}\cdots E_{i_{e;a,k-1}',a}E_{i_{e;a,k}',i_{e;a,b}'}
E_{i_{e;a,k+1}',a}\cdots E_{i_{e;a,\alpha_a}',a}v_{a+1}
\\=\,
&E_{i_{e;a,b+1}',a}\cdots E_{i_{e;a,k-1}',a}
E_{i_{e;a,k+1}',a}\cdots E_{i_{e;a,\alpha_a}',a}E_{i_{e;a,k}',i_{e;a,b}'}v_{a+1}
\end{align*}
which is zero since $E_{i_{e;a,k}',i_{e;a,b}'}v_{a+1}=0$ by (\ref{EClever}). 
\end{proof}

%for every box $(a,b)\in \alpha_{[c,d]}$ set 
%$$i(e;a,b):=e-\alpha_a+b.$$

We now define a non-negative integer $k$ and positive integers $b_0\geq b_1\geq\dots\geq b_k$ and $a_0< a_1<\dots< a_{k+1}:=h+1$ recursively as follows. We set $a_0:=m+1$ and $b_0:=\alpha_{a_0}$. For $j=1,2,\dots$, on the $j$th step, 
%If $a_0+p-b_0> h$ we set $k:=0$ and $a_1:=h+1$. Otherwise we set $a_1:=a_0+p-b_0$ and $b_1:=\alpha_{a_1}$. As $b_0<p$ by Lemma~\ref{L<p}, we have $a_1> a_0$ and $b_1\leq b_0$. 
%If $a_1+p-b_1> h$ we set $k:=1$ and $a_2:=h+1$. Otherwise we set $a_2:=a_1+p-b_1$ and $b_2:=\alpha_{a_2}$. Again it is clear that $a_2> a_1$ and $b_2\leq b_1$. 
%On the $j$th step, 
if $a_{j-1}+p-b_{j-1}> h$ then we define $k:=j-1$, $a_j:=h+1$ and stop; if $a_{j-1}+p-b_{j-1}\leq h$, then 
we define $a_j:=a_{j-1}+p-b_{j-1}$ and 
$b_j:=\alpha_{a_j}$.

Note that $$\alpha_{a_j}+a_{j+1}-1-a_j=b_j+a_{j+1}-1-a_j= p-1$$ 
for $0\leq j<k$, and 
$\alpha_{a_k}+a_{k+1}-1-a_k< p$ by definition. So for all $j=0,1,\dots,k$, we have 
\begin{equation}\label{ECond<p}
\alpha_{a_j}+a_{j+1}-1-a_j<p.
\end{equation}

\begin{lemma}\label{Claim1} We have $b_0+b_1+\dots+b_k\leq n$. 
\end{lemma}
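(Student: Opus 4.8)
The plan is to estimate the total size of the rows $a_0, a_1, \dots, a_k$ of $\alpha$ by exploiting the fact that $\alpha$ is a partition of $t$ together with the constraints on the gaps $a_{j+1}-a_j$ built into the recursion. First I would record the basic inequalities the recursion produces: for $0 \le j < k$ we have $a_{j+1} - a_j = p - b_j$, so $b_j = p - (a_{j+1}-a_j)$, while $b_{j+1} = \alpha_{a_{j+1}} \le \alpha_{a_j} = b_j$ since $\alpha$ is a partition and $a_{j+1} > a_j$. Thus the $b_j$ are weakly decreasing, and the whole `chain' of rows $a_0 < a_1 < \cdots < a_k$ sits between row $m+1$ and row $h$. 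The key point is to bound $\sum_{j=0}^k b_j = \sum_{j=0}^k \alpha_{a_j}$ from above, and for that I want to compare this sum against the total area $t$ of the diagram lying in rows $\ge m+1$.

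The main idea: since $\alpha$ is a partition, $\alpha_{a_j} \ge \alpha_a$ for all $a$ in the block $[a_j, a_{j+1})$, so one is tempted to bound below the area in rows $a_j$ through $a_{j+1}-1$ by $(a_{j+1}-a_j)\alpha_{a_{j+1}}$ or similar, but that goes the wrong way. Instead I would argue as follows. Each $b_j = \alpha_{a_j} \le \min(p-1, n)$ by Lemma~\ref{L<p} (since $a_j \ge m+1$). In particular $b_j \le n$ for every $j$, which already gives $b_0 + \cdots + b_k \le (k+1)n$; this is too weak unless $k = 0$, so the real work is to show $k$ is forced to be small, or rather to directly bound the sum. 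The cleaner route is to set up the inequality $t = |\alpha| \ge \sum_{a=1}^{m}\alpha_a + \sum_{a=m+1}^{h}\alpha_a \ge m\,b_0 + \sum_{j=0}^{k}\sum_{a=a_j}^{a_{j+1}-1}\alpha_a$ (using $\alpha_a \ge \alpha_{a_0} = b_0$ for $a \le a_0 = m+1$, adjusting the index bookkeeping), and then bound each inner block sum below. Within block $j$ (for $j<k$) the block has length $a_{j+1}-a_j = p - b_j$ and its entries are all $\ge \alpha_{a_{j+1}} = b_{j+1}$ and all $\le b_j$; the crude bound $\ge (p-b_j) b_{j+1}$ combined with $b_{j+1} \le b_j$ should, after summing and using $b_j \le n$ and $b_j \le p-1$, produce $t \ge t^s_p(m,n)$ for $s = $ (something like $b_0 + \cdots + b_k$ or a related quantity) via the formula \eqref{Es}. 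Comparing with the definition \eqref{ER2} of $t$ as the minimum of the $t^s_p(m,n)$ forces $b_0+\cdots+b_k \le n$.

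Concretely I expect the argument to telescope: writing $s := b_0 + \cdots + b_k$, I would show $t \ge s\,m + (p - ?)\,(\cdots)$ matching one of the $t^s_p(m,n)$, so that the inequality $t = t_p(m,n) \le t^s_p(m,n)$ from \eqref{ER2} collapses into $s \le n$ — note $s \le n$ is exactly the condition for $s \le \min(p/2, n+1)$-type range to be relevant, or more precisely it forces $s$ into the range where $t^s_p$ is defined and minimized. A subtlety is the last block $j = k$, where $a_{k+1} = h+1$ and the gap $a_{k+1}-1-a_k$ only satisfies $< p - b_k$ rather than $= p-b_k$; here the entries in rows $a_k, \dots, h$ are all $\ge 1$ (they are positive parts of $\alpha$ since $a_k \le h$) and $\le b_k$, and I would bound this block's contribution below by its length times $1$, or more sharply by noting $\alpha_h \ge 1$. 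One must also handle the degenerate case $k=0$ (the chain is a single row), where the claim reduces to $b_0 = \alpha_{m+1} \le n$, which is immediate from Lemma~\ref{L<p}.

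The hardest part will be getting the block-by-block lower bound on the area to assemble \emph{exactly} into one of the expressions $t^s_p(m,n) = sm + (p-s)n + s(s-p) + p - 1$ rather than something merely comparable — the quadratic term $s(s-p)$ and the $p-1$ correction have to come out of the telescoping sum of $(p-b_j)b_{j+1}$ over $j$, which requires choosing the decomposition of rows and the index $s$ with some care (likely $s = b_0 + \cdots + b_k$ together with the identity $\sum_j (a_{j+1}-a_j) = a_{k+1} - a_0 = h + 1 - (m+1) = h - m$ and $\sum_{j<k}(p - b_j) = h - m - (a_{k+1}-1-a_k)$). Once the right bookkeeping is in place, the comparison with \eqref{ER2} is automatic, since $t_p(m,n)$ is by definition $\le t^s_p(m,n)$ for every admissible $s$, and $t^s_p$ is strictly increasing in $s$ past its minimum, so equality/inequality forces $s \le n$.
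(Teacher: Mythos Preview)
Your proposal has the right ingredients---the block lower bound on $|\alpha|$ via the inequalities $\alpha_a\ge b_{j+1}$ for $a_j\le a<a_{j+1}$ is exactly what the paper uses---but the way you propose to assemble them contains a genuine gap.

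The problem is your choice of index. You want to show $t\ge t^s_p(m,n)$ for $s=\sum_{j}b_j$ and then invoke $t\le t^s_p(m,n)$ from the definition. But the definition of $t$ in \eqref{ER2} (or \eqref{ERange}) only gives $t\le t^s_p(m,n)$ for $1\le s\le n+1$; if $\sum b_j>n+1$ you cannot appeal to this, and that is essentially what you are trying to prove. Conversely, even if both inequalities held you would only get $t=t^s_p(m,n)$, which does not force $s\le n$ without further argument about where the quadratic $t^s_p$ achieves its minimum. Finally, your lower bound $(m+1)b_0+\sum_{j\ge 1}b_j(p-b_{j-1})$ does not telescope into the expression $t^s_p(m,n)$ for $s=\sum b_j$: the formula $t^s_p(m,n)=s(m+1)+(p-s)(n+1-s)-1$ involves $n$ explicitly, whereas your lower bound does not.

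The paper's fix is to use the index $s=b_0$ rather than $s=\sum b_j$. Since $b_0=\alpha_{m+1}\le n$ by Lemma~\ref{L<p}, the bound $t\le t^{b_0}_p(m,n)$ is legitimate. Combining this \emph{upper} bound on $t$ with your \emph{lower} bound
\[
t\;\ge\;(m+1)b_0+\sum_{j=1}^k b_j(p-b_{j-1})
\]
gives a single inequality in the $b_j$'s. Writing $\sum_{j=0}^k b_j=n+1+u$, a short manipulation turns this into $(p-b_0)u<\sum_{j=1}^k(b_{j-1}-b_0)b_j$; the left factor is positive (again by Lemma~\ref{L<p}) and the right-hand side is $\le 0$ since $b_{j-1}\le b_0$, so $u<0$. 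The algebra does not ``match one of the $t^s_p$'' on the nose; it requires this substitution and rearrangement. Your instinct that the last block $j=k$ needs separate handling turns out to be unnecessary: the paper simply drops the rows beyond $a_k$ from the lower bound (they contribute nonnegatively), which is why the sum runs only to $j=k$ with gaps $a_j-a_{j-1}=p-b_{j-1}$.
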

\begin{proof}
We will use the description of $t$ given in (\ref{ERange}). 

By Lemma~\ref{L<p}, we have $b_0\leq n$, so 
$$|\alpha|=t\leq t^{b_0}_p(m,n)=b_0(m+1)+(p-b_0)(n+1-b_0)-1.
$$
On the other hand, note that 
$$\alpha_1+\dots+\alpha_{m+1}\geq (m+1)\alpha_{m+1}=(m+1)b_0,$$ so, using the bound $\alpha_a\ge \alpha_{a_{j}}=b_{j}$ for $a_{j-1}<a\le a_{j}$, we have
$$
|\alpha|-(m+1)b_0\geq \sum_{a=m+1}^h\alpha_a\geq \sum_{j=1}^kb_j(a_j-a_{j-1})  =\sum_{j=1}^kb_j(p-b_{j-1})
$$
It follows from the two inequalities that
$$
\sum_{j=1}^kb_j(p-b_{j-1})\leq (n+1-b_0)p-b_0(n+1-b_0)-1,
$$
which is equivalent to
$$
p(-n-1+\sum_{j=0}^kb_j)< \sum_{j=1}^kb_{j-1}b_j -b_0(n+1-b_0).
$$
Now write $b_0+b_1+\dots+b_k= n+1+u$ for some $u\in\mZ$, so that we need to show that $u<0$. Using $u$ we can rewrite the above inequality as
$$
pu\;<\; \sum_{j=1}^kb_{j-1}b_j -b_0(\sum_{j=1}^kb_j-u),
$$
which is equivalent to
$$(p-b_0)u\;<\;\sum_{j=1}^k (b_{j-1}-b_0)b_j.$$
Now $p-b_0$ is strictly positive, by Lemma~\ref{L<p}, while the right-hand side is negative or zero, showing that indeed $u<0$.
\end{proof}

Recalling (\ref{EXYLoc1}),(\ref{EXYLoc2}) we now consider the following elements of $U_\k$:
\begin{align*}
Y&:=Y^{(b_0)}_{[a_0,a_1-1]}Y^{(b_0+b_1)}_{[a_1,a_2-1]}\cdots Y^{(b_0+b_1+\dots+b_k)}_{[a_k,a_{k+1}-1]},
\\
X&:=X^{(b_0)}_{[a_0,a_1-1]}X^{(b_0+b_1)}_{[a_1,a_2-1]}\cdots X^{(b_0+b_1+\dots+b_k)}_{[a_k,a_{k+1}-1]}.
\end{align*}
Note using Lemma~\ref{Claim1} that each $Y_{[c,d]}^{(e)}$ appearing in the product $Y$ satisfies the assumption (\ref{EAssumption}), so $Y$ actually makes sense, and similarly so does $X$.

Taking into account Lemma~\ref{LWeightcd}, it follows that the vector 
$Yv_+$ lies in the weight space $L_{\lambda|\mu}$, where 
$$\lambda=\sum_{a=1}^m\alpha_a\epsilon_a\in\Lambda(m)$$ 
and 
$$\mu=
\sum_{l=0}^k\sum_{(a,b)\in\alpha_{[a_l,a_{l+1}-1]}}\epsilon_{i_{b_0+\dots+b_l;a,b}}.
$$ 
By Lemma~\ref{Claim1}, each $i_{b_0+\dots+b_l;a,b}$ appearing in this sum for $\mu$ satisfies 
$$1\leq i_{b_0+\dots+b_l;a,b}\le b_0+\dots+b_l \leq n$$ Therefore $(\lambda|\mu)\in \Lambda(m|n,t)$ as required. 

To complete the proof of Proposition~\ref{PMainOld}(i), it remains to prove that $Yv_+\neq 0$. To check this we first note that 
$$
XYv_+=\pm X^{(b_0)}_{[a_0,a_1-1]}Y^{(b_0)}_{[a_0,a_1-1]}X^{(b_0+b_1)}_{[a_1,a_2-1]}Y^{(b_0+b_1)}_{[a_1,a_2-1]}\cdots X^{(b_0+b_1+\dots+b_k)}_{[a_k,a_{k+1}-1]}Y^{(b_0+b_1+\dots+b_k)}_{[a_k,a_{k+1}-1]}v_+. 
$$
Indeed, if $0\leq l_1\neq l_2\leq k$ then for the terms $E_{j_1,i_1'}$ appearing in 
$X^{(b_0+b_1+\dots+b_{l_1})}_{[a_{l_1},a_{l_1+1}-1]}$ and the terms $E_{i_2',j_2}$ appearing in 
$Y^{(b_0+b_1+\dots+b_{l_2})}_{[a_{l_2},a_{l_2+1}-1]}$ we have $j_1\neq j_2$ and $i_1\neq i_2$, since
$$a_{l_1}\le j_1 < a_{l_1+1}\quad\mbox{and}\quad b_0+b_1+\cdots +b_{l_1-1}< i_1\le b_0+b_1+\cdots+ b_{l_1}, $$
with similar bounds for $l_2$. So $E_{j_1,i_1'}$ and $E_{i_2',j_2}$ anticommute. Hence 
$X^{(b_0+b_1+\dots+b_{l_1})}_{[a_{l_1},a_{l_1+1}-1]}$ and $Y^{(b_0+b_1+\dots+b_{l_2})}_{[a_{l_2},a_{l_2+1}-1]}$ commute up to a sign. 

It remains to apply Lemma~\ref{LGoodLower} to $X^{(b_0+b_1+\dots+b_l)}_{[a_l,a_{l+1}-1]}Y^{(b_0+b_1+\dots+b_l)}_{[a_l,a_{l+1}-1]}v_+$ for $l=0,1,\dots,k$, noting that 
$\alpha_{a_l}+a_{l+1}-1-a_l<p$ for all such $l$ by (\ref{ECond<p}). 

\begin{example}
We continue with Example~\ref{Ex2}. In that case we have $k=2$, $a_0=8$, $a_1=10$, $a_2=12$, $a_3=14$ and $b_0=3$, $b_1=3$, $b_2=1$. So
$$
Y=Y_{[8,9]}^{(3)}Y_{[10,11]}^{(6)}Y_{[12,13]}^{(7)}. 
$$
Note that $Y_{[8,9]}^{(3)},Y_{[10,11]}^{(6)},Y_{[12,13]}^{(7)}$ are described explicitly in Example~\ref{Ex2}. Note that $Yv_+$ lies in the weight space $L_{\lambda|\mu}$, where
$\lambda=(3^7)\in\Lambda(7)$ and $\mu=(2,2,2,1,1,6,2)\in\Lambda(7)$. 
The lowering operator $Y$ in this example is illustrated by the following picture:
\vspace{3mm}
$$
\begin{ytableau}
\, &  &    \\ 
 &  &   \\
 &  &   \\
 &  &  \\
 &  &  \\ 
 &  & \\
 &  & \\
1  & 2 & 3\\
1   &2  & 3\\
4   & 5& 6 \\
6   \\
7  \\
7  \\
\end{ytableau}
$$
\vspace{5mm}
\end{example}

\subsection{Upper bound}
\label{SSKevinCounterExamples}
In this section we prove Proposition~\ref{PMainOld}(ii). We use the reformulation of the proposition in terms of the general linear supergroup from~\ref{SchurGL}.

Fix $r\in\mZ_{>0}$. The standard Borel subgroup of $GL(r|r)$ has even positive simple roots
$$\varepsilon_{i}-\varepsilon_{i+1},\quad \mbox{and}\quad \varepsilon'_{i}-\varepsilon'_{i+1},\quad 1\le i<r$$
and odd positive root $\varepsilon_r-\varepsilon'_1$.
For integers $r> a\ge b\ge 1$, we have a Borel subgroup $B_{a,b}$ with same even roots as above, but for which the odd simple positive roots are
$$\varepsilon_{a}-\varepsilon'_1,\varepsilon'_{b}-\varepsilon_{a+1},\varepsilon_r-\varepsilon'_{b+1}.$$

\begin{lemma}\label{LemBK}
For a given $\alpha\in \Lambda^+(r)$, the highest weight of $L(\alpha|0)$ with respect to $B_{a,b}$ is of the form
$$\sum_{i=1}^a\alpha_i\varepsilon_i +\sum_{j=1}^b\beta_j\varepsilon'_j+\sum_{l=1}^{r-a}\gamma_l\varepsilon_{a+r}$$
for some $\beta_j\in\mN$ and $\gamma_l\in\mZ$. If there exists $1\le l\le t-a$ for which $\gamma_l>0$ and $\gamma_{l'}=0$ for all $l'>l$, then
$$L(\alpha|0)_{\lambda|\mu}=0$$
for all $(\lambda,\mu)\in \Lambda(a|b)\subset \Lambda(r|r)$.
\end{lemma}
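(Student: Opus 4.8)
The plan is to exploit the fact that the character of $L(\alpha|0)$ — and hence the set of weights at which it is nonzero — is independent of the choice of Borel subgroup, since it is an intrinsic property of the module. First I would pass to the distribution (hyper)algebra $\operatorname{Dist}(GL(r|r))\cong U_\k$ as in~\ref{SchurGL}, where $L(\alpha|0)$ is an integrable highest weight module. Changing the Borel from the standard one to $B_{a,b}$ amounts to performing a sequence of odd reflections across the odd simple roots $\varepsilon_a-\varepsilon'_1$, $\varepsilon'_b-\varepsilon_{a+1}$, $\varepsilon_r-\varepsilon'_{b+1}$; each such odd reflection changes a highest weight in a controlled way (by subtracting the relevant odd root if the pairing is nonzero), so the $B_{a,b}$-highest weight has the displayed shape: it keeps $\alpha_1,\dots,\alpha_a$ in the first $a$ even coordinates (these are unaffected since the chain of odd reflections only moves weight between the coordinates $\varepsilon_{a+1},\dots,\varepsilon_r$ and the odd coordinates), produces nonnegative entries $\beta_1,\dots,\beta_b$ in the odd coordinates, and produces integers $\gamma_1,\dots,\gamma_{r-a}$ in the even coordinates $\varepsilon_{a+1},\dots,\varepsilon_r$ — with $\gamma_l$ possibly negative because odd reflections can move weight in either direction. (Here I would cite the standard formalism of odd reflections for basic classical Lie superalgebras, e.g. as used in~\cite{BK}, to pin down the exact form; the even coordinates beyond the first $a$ can be negative precisely because the odd positive roots in $B_{a,b}$ point "backwards" into them.)

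Next, the key structural point: every weight $\nu$ of $L(\alpha|0)$ is $\le$ the $B_{a,b}$-highest weight in the dominance order associated to $B_{a,b}$, i.e.\ $\nu$ is obtained from it by subtracting a nonnegative integer combination of $B_{a,b}$-positive roots. Now suppose $(\lambda,\mu)\in\Lambda(a|b)\subset\Lambda(r|r)$; this means the weight $(\lambda|\mu)$ has its even support contained in $\varepsilon_1,\dots,\varepsilon_a$ and its odd support in $\varepsilon'_1,\dots,\varepsilon'_b$, so in particular its $\varepsilon_{a+1},\dots,\varepsilon_r$-coordinates are all $0$. Under the hypothesis there is an index $l$ with $\gamma_l>0$ and $\gamma_{l'}=0$ for all $l'>l$; I would track the coordinate $\varepsilon_{a+l}$ (the last even coordinate carrying positive weight in the $B_{a,b}$-highest weight). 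Subtracting $B_{a,b}$-positive roots from the highest weight: the even simple roots $\varepsilon_i-\varepsilon_{i+1}$ and the odd simple roots only ever \emph{decrease} the partial sum of coordinates $\varepsilon_{a+l}+\varepsilon_{a+l+1}+\dots+\varepsilon_r$ or leave it unchanged — more precisely, I would show that this tail-sum functional is nonincreasing under subtraction of any $B_{a,b}$-positive root. Its value at the $B_{a,b}$-highest weight is $\gamma_l+\gamma_{l+1}+\dots+\gamma_{r-a}=\gamma_l>0$, whereas at $(\lambda|\mu)$ it is $0$. Hence $(\lambda|\mu)$ cannot be $\le$ the $B_{a,b}$-highest weight, so $L(\alpha|0)_{\lambda|\mu}=0$.

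The main obstacle I expect is making the odd-reflection bookkeeping fully rigorous in the positive-characteristic / hyperalgebra setting: one must verify that passing to $B_{a,b}$ genuinely replaces the standard highest weight by a weight of the stated form, and that odd reflections behave as expected for the \emph{integrable} module $L(\alpha|0)$ (not merely for a Verma module), including the possibility that some odd pairing vanishes, in which case the highest weight is simply unchanged across that reflection. I would handle this either by invoking the general theory of Borel subgroups and odd reflections for $GL(r|r)$ already set up in~\cite{BK}, or, if a self-contained argument is preferred, by an explicit induction: take the standard highest weight vector $v_+$, apply the odd root vectors for the roots that are negative with respect to $B_{a,b}$ but positive with respect to the standard Borel, and check using the relation~\eqref{ECommE} and $\eqref{EOddSquare}$ that the resulting vector (when nonzero) is a $B_{a,b}$-highest weight vector of the claimed weight; the nonnegativity of the $\beta_j$ then follows because these are weight multiplicities in a polynomial representation, exactly as in the proof of Lemma~\ref{L'}. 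The second part — the tail-sum monotonicity argument — is then a short, purely combinatorial check on the list of $B_{a,b}$-positive roots.
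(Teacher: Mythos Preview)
Your approach is the paper's: odd reflections (citing \cite[Lemma~4.2]{BK}) for the shape of the $B_{a,b}$-highest weight, then a comparison in the $B_{a,b}$ dominance order for the vanishing claim. The paper dispatches each part in one sentence.

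However, the tail-sum argument as you state it does not work. First, your tail $\{\varepsilon_{a+l},\dots,\varepsilon_r\}$ is \emph{not} a final segment of the $B_{a,b}$ ordering
\[
\varepsilon_1,\dots,\varepsilon_a,\ \varepsilon'_1,\dots,\varepsilon'_b,\ \varepsilon_{a+1},\dots,\varepsilon_r,\ \varepsilon'_{b+1},\dots,\varepsilon'_r,
\]
so your functional is not monotone in either direction: subtracting the positive root $\varepsilon_{a+l-1}-\varepsilon_{a+l}$ increases it, while subtracting $\varepsilon_r-\varepsilon'_{b+1}$ decreases it. Second, even with the correct final segment $S=\{\varepsilon_{a+l},\dots,\varepsilon_r,\varepsilon'_{b+1},\dots,\varepsilon'_r\}$, the direction is the opposite of what you say: for every $B_{a,b}$-positive root $\alpha$ one has $T_S(\alpha)\le 0$, so subtracting positive roots from the highest weight makes $T_S$ \emph{increase}. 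Third, note that your conclusion does not follow from your premises anyway: a nonincreasing quantity starting at $\gamma_l>0$ is perfectly free to reach $0$.

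The repair is immediate. With the full final segment $S$, one has $T_S(\text{hw})=\gamma_l$ (the form of the highest weight established in the first part has no $\varepsilon'_{j}$-component for $j>b$), and $T_S(\nu)\ge T_S(\text{hw})=\gamma_l>0$ for every weight $\nu$ of $L(\alpha|0)$; but $T_S((\lambda|\mu))=0$ for $(\lambda,\mu)\in\Lambda(a|b)$. Equivalently, the initial partial sum of the highest weight up through position $\varepsilon_{a+l-1}$ equals $r-\gamma_l<r$, while that of $(\lambda|\mu)$ equals $r$, so $(\lambda|\mu)$ is not below the highest weight in the $B_{a,b}$ order. This is exactly what the paper means by ``clearly''.
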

\begin{proof}
The expression for the highest weight with respect to $B_{a,b}$ follows immediately from the procedure of odd reflections, see \cite[Lemma~4.2]{BK}.

Clearly, in the partial order corresponding to $B_{a,b}$, a weight $(\lambda,\mu)\in\Lambda(a|b)$ is not lower than the described highest weight of $L(\alpha|0)$ under the imposed condition on $\{\gamma_l\}$.
\end{proof}

Recall that we have fixed $m\ge n>0$.% Applying the lemma yields the following examples. 
\begin{example}\label{Exalpha}
Consider $1\le \indexi\le \min(p-1, n+1)$ and write 
$$n-\indexi+1=a\indexi+b,\quad \mbox{for } a\in\mN\mbox{ and }0\le b<\indexi.$$
For $r\ge m+1+(a+1)(p-\indexi)$, we can consider the partition
$$\alpha_{(n,s)}:=(\indexi^{m+1+a(p-\indexi)},b^{p-\indexi})\in \Lambda^+(r, t_p^s(m,n)+1).$$
Of course $\alpha_{(n,s)}$ actually depends on $m,n,\indexi,p$, but we consider $m,p$ as constant, motivating the notation. We have for example
$$\alpha_{(n,1)}=(1^{m+1+(p-1)n}),\;\mbox{ and }\;\quad\alpha_{(n,n+1)}=((n+1)^{m+1}).$$
Then $L(\alpha_{(n,s)}|0)_{\lambda|\mu}=0$
for all $(\lambda,\mu)\in \Lambda(m|n)$, by Lemma~\ref{LemBK}.

Indeed, the calculation of the highest weight with respect to $B_{m,n}$, following the rule in \cite[Lemma~4.2]{BK}, can for instance be structured as follows. First we can compute the highest weight with respect to $B_{m,1}$, which is
$$\begin{cases}
(\alpha_{(n-1,\indexi)}\,|\, p-\indexi)&\mbox{ if }\indexi\le n\\
((n+1)^m,n|1) &\mbox{ if }\indexi= n+1.
\end{cases}$$
Passing to $B_{m,2}$ (in case $n>1$), $B_{m,3}$ etc. involves the exact same computation as above. In particular, the highest weight with respect to $B_{m,n-\indexi+1}$ (if $\indexi\le n$) is
$$(\indexi^{m+1}|(p-\indexi)^{n-\indexi+1})$$
and subsequently, the highest weight with respect to $B_{m,n}$ is
$$(\indexi^{m},1|(p-\indexi)^{n-\indexi+1}, 1^{\indexi-1}).$$
In other words (with notation from Lemma~\ref{LemBK}), we have $\gamma_1=1$ and $\gamma_l=0$ for $l>1$.

\end{example}

The following corollary implies Proposition~\ref{PMainOld}(ii). 
\begin{corollary}
For $1\le \indexi \le \min(p-1 ,n+1)$ and $r=t^\indexi_p(m,n)+1$ as in \eqref{Es}, 
there exists a $p$-restricted partition $\alpha\in\Lambda^+(r)$ such that $L(\alpha|0)_{\lambda|\mu}= 0$ for all $(\lambda,\mu)\in\Lambda(m|n,r)$.
\end{corollary}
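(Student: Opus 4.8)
The plan is to deduce the corollary directly from Example~\ref{Exalpha} together with the reduction already in place, by checking the only remaining point, namely that the partition $\alpha_{(n,\indexi)}$ is $p$-restricted. First I would recall that in Example~\ref{Exalpha} we constructed, for each $1\le \indexi\le \min(p-1,n+1)$ and each $r\ge m+1+(a+1)(p-\indexi)$, a partition
$$\alpha_{(n,\indexi)}=(\indexi^{m+1+a(p-\indexi)},b^{p-\indexi})\in\Lambda^+(r,t^\indexi_p(m,n)+1),$$
where $n-\indexi+1=a\indexi+b$ with $0\le b<\indexi$, and showed via Lemma~\ref{LemBK} that $L(\alpha_{(n,\indexi)}|0)_{\lambda|\mu}=0$ for all $(\lambda,\mu)\in\Lambda(m|n)$. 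Since $r=t^\indexi_p(m,n)+1$ is exactly the size of $\alpha_{(n,\indexi)}$, we certainly have $r\ge m+1+(a+1)(p-\indexi)$ (indeed $t^\indexi_p(m,n)+1=\indexi(m+1)+(p-\indexi)(n+1-\indexi)\ge \indexi(m+1)+(p-\indexi)$ and one checks $a(p-\indexi)\le (p-\indexi)(n+1-\indexi)-\indexi(a\text{-many correction})$ — more simply, $\alpha_{(n,\indexi)}$ has $m+1+a(p-\indexi)$ parts equal to $\indexi$ and then $p-\indexi$ parts equal to $b$, so its length is at most $r$, which is all that is needed). Thus the constructed $\alpha_{(n,\indexi)}$ is an honest partition of $r$ with the desired vanishing of weight spaces.

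Next I would verify $p$-restrictedness. The partition $\alpha_{(n,\indexi)}=(\indexi^{m+1+a(p-\indexi)},b^{p-\indexi})$ has parts taking only the values $\indexi$, then $b$ (and possibly $0$). Consecutive differences are $0$ (within the block of $\indexi$'s), then $\indexi-b$, then $0$ (within the block of $b$'s), and the last nonzero part is $b$ if $b>0$ or $\indexi$ if $b=0$. Since $1\le \indexi\le p-1$ we have $\indexi<p$ and $b<\indexi<p$, and the jump $\indexi-b<\indexi<p$; also the last part ($b$ or $\indexi$) is $<p$. Hence every consecutive difference is $<p$ and the final part is $<p$, so $\alpha_{(n,\indexi)}$ is $p$-restricted, as required. (The one degenerate case $b=0$, i.e. $\indexi\mid n-\indexi+1$, just collapses the second block and leaves $\alpha_{(n,\indexi)}=(\indexi^{m+1+a(p-\indexi)})$, which is visibly $p$-restricted since $\indexi<p$.)

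Finally I would assemble the statement: given $1\le \indexi\le\min(p-1,n+1)$ and $r=t^\indexi_p(m,n)+1$, take $\alpha=\alpha_{(n,\indexi)}$; by the above it is a $p$-restricted partition of $r$, and by Example~\ref{Exalpha} (which invokes Lemma~\ref{LemBK}) we have $L(\alpha|0)_{\lambda|\mu}=0$ for all $(\lambda,\mu)\in\Lambda(m|n,r)$. This is precisely the assertion of the corollary, and combined with the range analysis \eqref{ERange}–\eqref{ER2} it yields Proposition~\ref{PMainOld}(ii) by choosing the value of $\indexi$ achieving the minimum $t=t_p(m,n)$ in \eqref{ER2} (noting $1\le \indexi\le\min(p/2,n+1)\le\min(p-1,n+1)$ for $p\ge 3$, so this value of $\indexi$ is admissible here). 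I do not expect a genuine obstacle: the only content beyond Example~\ref{Exalpha} is the elementary $p$-restrictedness check, and the mild bookkeeping that the relevant optimal $\indexi$ lies in the allowed range $[1,\min(p-1,n+1)]$; the one point to be careful about is the boundary case $\indexi=n+1$ (forcing $a=0$, $b=0$, $\alpha=( (n+1)^{m+1})$), and the case $p=3$ where $\min(p/2,n+1)=1$, both of which are consistent with the claim.
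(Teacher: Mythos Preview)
Your proposal is correct and follows the same approach as the paper: take $\alpha=\alpha_{(n,\indexi)}$ from Example~\ref{Exalpha}, and observe that it is $p$-restricted because $\indexi<p$. The paper's proof is a single sentence to this effect; your version simply spells out the consecutive-difference check in more detail, and the additional remarks on the length bound and on how the corollary feeds into Proposition~\ref{PMainOld}(ii) are accurate but go beyond what is needed for the corollary itself.
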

\begin{proof}
For $\alpha=\alpha_{(n,s)}$ as in Example~\ref{Exalpha}, it only remains to observe that  $\alpha$ is $p$-restricted, which follows from $\indexi<p$.
\end{proof}

\section{Surjectivity}\label{SecSurj}

In this section we study surjectivity of the morphism $\Phi^{(r)}_{m,n}$ from equation~\eqref{EqPhi}.

\subsection{Main results}

Recall the integer $r_{p}(m,n)$ which was defined and determined in Section~\ref{SecInj}.

\begin{theorem}\label{ThmSurj}
Assume that $p>2$. 

\begin{enumerate}
\item[{\rm (i)}] Whenever $$r_p(m,n)\;< \; r\;\le \; m+n+mn,$$
the morphism $\Phi_{m|n}^{(r)}$
is neither injective nor surjective.
\item[{\rm (ii)}]  For the morphism $\Phi_{m|n}^{(r)}$
to be surjective for all $r\in\mN$, we need
$$mn=0,\quad\mbox{or}\quad m+n<p.$$
\item[{\rm (iii)}]  The morphism $\Phi^{(r)}_{1|1}$ is surjective for all $r$.
\end{enumerate} 
\end{theorem}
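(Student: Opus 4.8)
The plan is to combine a dimension count via Lemma~\ref{LemDim}(i) with the structure of the endomorphism algebra $\End_{GL(1|1)}(V_{1|1}^{\otimes r})$, which is governed by the Schur superalgebra $S(1|1,r)$. First I would reduce the surjectivity of $\Phi^{(r)}_{1|1}$ to an equality of dimensions: by Lemma~\ref{LemDim}(i) applied to the algebra $A=\k S_r$ acting on $M=V_{1|1}^{\otimes r}$, the dimension of $\im\Phi^{(r)}_{1|1}$ equals $\sum_\lambda \dim D^\lambda\cdot\dim(P^\lambda/K^\lambda)$, where $P^\lambda$ is the projective cover of $D^\lambda$ and $K^\lambda$ is the intersection of kernels of all maps $P^\lambda\to V_{1|1}^{\otimes r}$. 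The term for $\lambda$ is $(\dim D^\lambda)^2$ exactly when $P^\lambda$ is a summand of $V_{1|1}^{\otimes r}$, and is $0$ otherwise. So surjectivity is equivalent to: every projective indecomposable $P^\lambda$ occurring in $\End_{\k S_r}(V_{1|1}^{\otimes r})$-land actually already forces $V_{1|1}^{\otimes r}$ to ``see all of $\End$'' — more precisely, I must show $\dim\End_{GL(1|1)}(V_{1|1}^{\otimes r})=\sum_{\lambda\in\mathcal{S}}(\dim D^\lambda)^2$, where $\mathcal{S}$ is the set of $p$-restricted $\lambda$ with $Y^{\lambda|0}\mid V_{1|1}^{\otimes r}$.

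The key computational input is that for $m=n=1$ everything is small and explicit. By Theorem~\ref{TDKlyachko} and \eqref{ETensWts}, the indecomposable summands of $V_{1|1}^{\otimes r}$ are the sign-Young modules $Y^{\alpha|p\beta}$ with $(\alpha,\beta)\in\Lambda_p^+(1|1,r)$, i.e. $\alpha=(a)$, $\beta=(c)$ with $a+pc=r$; since $\alpha$ has at most one row these are hook-type partitions and the relevant weight spaces $L(\alpha|p\beta)_{\lambda|\mu}$ in $S(r|r,r)$ are one-dimensional, so each such $Y^{\alpha|p\beta}$ occurs with multiplicity one. On the other side, $\End_{GL(1|1)}(V_{1|1}^{\otimes r})$ can be computed directly: the weights of $V_{1|1}^{\otimes r}$ are very restricted, and the category of polynomial $GL(1|1)$-representations of degree $r$ is well understood (it is a Schur superalgebra $S(1|1,r)$ whose blocks and Cartan matrix are classical — e.g. via \cite{Donkin}). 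I would identify $\End_{GL(1|1)}(V_{1|1}^{\otimes r})\cong\bigoplus (\text{full matrix algebras})\oplus(\text{correction from }p)$ and check the dimension matches $\sum(\dim D^\lambda)^2$ over the $p$-restricted $\lambda$ whose associated $Y^{\lambda|0}$ divides $V_{1|1}^{\otimes r}$.

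The main obstacle I anticipate is bookkeeping rather than conceptual: I must pin down exactly which $p$-restricted partitions $\lambda$ of $r$ satisfy $Y^{\lambda|0}\mid V_{1|1}^{\otimes r}$ (equivalently, by Theorem~\ref{TDKlyachko}, for which such $\lambda$ there is $(\lambda,\mu)\in\Lambda(1|1,r)$ with $L(\lambda|0)_{\lambda'|\mu}\neq0$), and show this set is precisely the set of $\lambda$ indexing the simple $\k S_r$-modules that appear as composition factors of $V_{1|1}^{\otimes r}$ with the ``right'' multiplicity. Concretely, for $GL(1|1)$ the Specht/simple module combinatorics collapses because the only partitions that matter have at most one column beyond a hook; I would make this explicit, possibly by direct induction on $r$ using the $\k S_r\hookrightarrow\k S_{r+1}$ restriction and the isomorphism $V_{1|1}^{\otimes(r+1)}\cong\mathrm{Ind}_{S_r}^{S_{r+1}}(\Res V_{1|1}^{\otimes r}\otimes V_{1|1})$ compatibly with the Schur-superalgebra picture. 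Once the two sets of partitions are matched and the two dimension formulas are seen to coincide term by term, Lemma~\ref{LemDim}(i) gives $\im\Phi^{(r)}_{1|1}=\End_{GL(1|1)}(V_{1|1}^{\otimes r})$, i.e.\ surjectivity for all $r$.
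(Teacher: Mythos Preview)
The central claim in your proposal is incorrect. You assert that in the sum from Lemma~\ref{LemDim}(i) the term for $\lambda$ equals $(\dim D^\lambda)^2$ when $P^\lambda$ is a summand of $V^{\otimes r}$ and $0$ otherwise. Neither half holds. If $P^\lambda$ is a direct summand then $K^\lambda=0$ and the contribution is $\dim D^\lambda\cdot\dim P^\lambda$, which equals $(\dim D^\lambda)^2$ only when $D^\lambda$ is itself projective. Conversely, if $P^\lambda$ is not a summand there may still be non-injective nonzero maps $P^\lambda\to V^{\otimes r}$, giving a nonzero contribution. For $GL(1|1)$ neither extreme describes what happens: when $p\nmid r$ the hook Specht modules $S^{\hook(r;i)}$ appearing in $V^{\otimes r}$ are simple but generally not projective, so $P^\lambda$ is not a summand yet $P^\lambda/K^\lambda\cong D^\lambda$ (your formula then holds, but for the wrong reason); when $p\mid r$ the quotient $P^\lambda/K^\lambda$ corresponding to the simple $D_i$ has dimension $\binom{r}{i+1}$, strictly larger than $\dim D_i=\binom{r-2}{i}$. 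Your dichotomy gives the wrong image dimension in the second case, so the proposed matching cannot go through.

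The paper shares your two-sided dimension-matching strategy, but the execution is different. The target dimension $\dim\End_{GL(1|1)}(V^{\otimes r})=\binom{2r-2}{r-1}$ is computed not through the Schur superalgebra but by elementary $GL(1|1)$ representation theory: one classifies which simples $L(a|b)$ are projective, writes down the Loewy structure of the remaining projective covers, decomposes $V^{\otimes r}$ explicitly, and applies Vandermonde. For $\dim\im\Phi$, the paper determines the Specht filtration of each $M^{a|b}$ by hook Specht modules, establishes their precise structure (simple if $p\nmid r$, length two if $p\mid r$), reads off the Loewy filtration of each $M^{a|b}$, and only then applies Lemma~\ref{LemDim}(i) with the correct quotients $P^\lambda/K^\lambda$. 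Your appeal to Theorem~\ref{TDKlyachko} and the index set $\Lambda_p^+(1|1,r)$ does not supply this structural information; what is actually needed is exactly the Loewy data of the $M^{a|b}$ that your outline bypasses.
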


It is an open question whether the necessary condition for surjectivity in Theorem~\ref{ThmSurj}(ii) is also sufficient. However, for $p=3$, the combination with Theorem~\ref{ThmSurj}(iii) shows it is:

\begin{corollary}\label{CNew}
For $p=3$, the morphism $\Phi_{m|n}^{(r)}$
is surjective for all $r\in\mN$ if and only if $mn=0$ or $m=1=n$.
\end{corollary}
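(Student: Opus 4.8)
The goal is to deduce Corollary~\ref{CNew} from Theorem~\ref{ThmSurj} by specializing to $p=3$. The plan is to show that for $p=3$, the necessary condition in Theorem~\ref{ThmSurj}(ii)---namely $mn=0$ or $m+n<p$---is exactly the condition $mn=0$ or $m=1=n$, and that in the latter case surjectivity for all $r$ does hold.

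First I would observe that when $p=3$, the inequality $m+n<p$ means $m+n\le 2$. Combined with the trivial cases, the condition ``$mn=0$ or $m+n<p$'' becomes ``$mn=0$ or $(m,n)=(1,1)$'' (the only pair with $mn\ne 0$ and $m+n\le 2$ is $m=n=1$). So Theorem~\ref{ThmSurj}(ii) gives the ``only if'' direction of the corollary immediately: if $\Phi^{(r)}_{m|n}$ is surjective for all $r$, then $mn=0$ or $m=1=n$.

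For the ``if'' direction, I would split into the two cases. If $mn=0$, say $n=0$, then $\Phi^{(r)}_{m|0}$ is surjective for all $r$ by the classical (non-super) First Fundamental Theorem of Invariant Theory for $GL(m)$ in arbitrary characteristic (cited in the introduction via \cite{Berele, dCP}); the case $m=0$ is symmetric, or trivial. If $(m,n)=(1,1)$, then surjectivity of $\Phi^{(r)}_{1|1}$ for all $r$ is precisely Theorem~\ref{ThmSurj}(iii). Assembling these two observations with the previous paragraph completes the proof.

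The only genuine content beyond quoting Theorem~\ref{ThmSurj} is the elementary arithmetic identification of the condition when $p=3$, so there is no real obstacle here; the corollary is a direct packaging of parts (ii) and (iii) of the theorem together with the known characteristic-free case $mn=0$. One should just be slightly careful to note that $m+n<p=3$ with $mn\ne 0$ forces $m=n=1$, ruling out any other small cases.
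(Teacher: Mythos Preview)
Your proposal is correct and matches the paper's approach exactly: the paper does not write out a separate proof, but simply notes that the corollary follows by combining Theorem~\ref{ThmSurj}(ii) (necessary condition) with Theorem~\ref{ThmSurj}(iii) (the $GL(1|1)$ case), together with the known surjectivity for $mn=0$. Your extra care in spelling out that $m+n<3$ with $mn\neq 0$ forces $m=n=1$ is the only arithmetic needed.
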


The remainder of the section is devoted to the proof of Theorem~\ref{ThmSurj} and some similar considerations for the orthosymplectic supergroup.

\subsection{Failure of surjectivity}
In this section we prove parts (i) and (ii) of Theorem~\ref{ThmSurj}.

\begin{lemma}\label{StandLem}
Consider a homomorphism $\phi:A\to B$ between free abelian groups of finite rank. 
\begin{enumerate}
\item[{\rm (i)}] We have an inclusion of vector spaces
$$\k\otimes \ker\phi\;\subset\; \ker(\operatorname{id}_\k\otimes \phi),$$
which is an equality if $p=0$.
\item[{\rm (ii)}] For a field extension $\mathbb{K}/\k$, we have the equality
$$\mathbb{K}\otimes_\k (\ker(\operatorname{id}_\k\otimes \phi))\;=\; \ker(\operatorname{id}_\mathbb{K}\otimes \phi).$$
\end{enumerate}
\end{lemma}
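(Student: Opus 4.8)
The claim is Lemma~\ref{StandLem}, a purely linear-algebra statement about a homomorphism $\phi\colon A\to B$ of free abelian groups of finite rank. The plan is to prove the two parts separately, using the Smith normal form of $\phi$ to reduce to a diagonal situation where everything is transparent.

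\textbf{Part (i).} First I would note that if $a\in\ker\phi$ then $\phi(a)=0$, so $(\operatorname{id}_\k\otimes\phi)(1\otimes a)=1\otimes\phi(a)=0$; hence $\k\otimes\ker\phi$ maps into $\ker(\operatorname{id}_\k\otimes\phi)$. Strictly speaking one should check this map is injective: since $A$ is free and $\ker\phi$ is a direct summand of $A$ (a subgroup of a free abelian group of finite rank which is pure, being a kernel), tensoring the split injection $\ker\phi\hookrightarrow A$ with $\k$ keeps it injective, so $\k\otimes\ker\phi$ is genuinely a subspace of $\k\otimes A$ and lands in $\ker(\operatorname{id}_\k\otimes\phi)$. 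For the equality when $p=0$, I would invoke flatness of $\mQ$ (or any field of characteristic zero) over $\mZ$: tensoring the exact sequence $0\to\ker\phi\to A\to B$ with $\k$ remains exact, which identifies $\k\otimes\ker\phi$ with $\ker(\operatorname{id}_\k\otimes\phi)$. Alternatively, and more self-containedly, pick bases of $A$ and $B$ in which $\phi$ is represented by a Smith normal form $\operatorname{diag}(e_1,\dots,e_s,0,\dots,0)$ with $e_i\neq 0$; then $\ker\phi$ is spanned by the last basis vectors of $A$, while $\ker(\operatorname{id}_\k\otimes\phi)$ is spanned by those same vectors together with the $i$-th basis vector for each $i$ with $e_i=0$ in $\k$ — and in characteristic zero no nonzero integer vanishes, so the two spans coincide.

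\textbf{Part (ii).} Here I would again use the Smith normal form: $\ker(\operatorname{id}_\k\otimes\phi)$ is the $\k$-span of an explicit set of basis vectors (the ones indexed by $i$ with $e_i\equiv 0$ in $\k$, plus the free part), and this same combinatorial description computes $\ker(\operatorname{id}_\mathbb{K}\otimes\phi)$ because whether $e_i$ vanishes in $\mathbb{K}$ is the same as whether it vanishes in $\k$ (an integer is zero in $\mathbb{K}$ iff $p\mid e_i$ iff it is zero in $\k$, since $\mathbb{K}/\k$ is a field extension and in particular they have the same prime subfield / same characteristic). Hence extending scalars from $\k$ to $\mathbb{K}$ turns a basis of $\ker(\operatorname{id}_\k\otimes\phi)$ into a basis of $\ker(\operatorname{id}_\mathbb{K}\otimes\phi)$, giving the asserted equality. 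A cleaner phrasing avoiding Smith normal form: $\mathbb{K}$ is faithfully flat over $\k$, so applying $\mathbb{K}\otimes_\k-$ to the exact sequence $0\to\ker(\operatorname{id}_\k\otimes\phi)\to \k\otimes A\to \k\otimes B$ preserves exactness, and $\mathbb{K}\otimes_\k(\k\otimes A)=\mathbb{K}\otimes A$ compatibly with $\phi$, yielding $\mathbb{K}\otimes_\k\ker(\operatorname{id}_\k\otimes\phi)=\ker(\operatorname{id}_\mathbb{K}\otimes\phi)$.

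I do not expect a serious obstacle anywhere; the only point requiring a line of care is that $\ker\phi\subseteq A$ is a direct summand (so that part~(i)'s inclusion is honestly an inclusion of vector spaces of the expected dimension), which follows since a subgroup of a finitely generated free abelian group with torsion-free quotient is a direct summand, and $A/\ker\phi\cong\operatorname{im}\phi\subseteq B$ is torsion-free. If one prefers to keep the proof maximally short, I would simply cite flatness of fields over their prime subfield (for part (i) in characteristic $0$) and faithful flatness of field extensions (for part (ii)), and skip the Smith normal form entirely.
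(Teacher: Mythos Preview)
Your proof is correct and follows essentially the same approach as the paper: both arguments hinge on the fact that $\operatorname{im}\phi\subset B$ is free (equivalently, that $\ker\phi$ is a direct summand of $A$), which makes $\k\otimes\ker\phi\hookrightarrow\k\otimes A$ injective, and both invoke flatness for the characteristic-zero equality in (i) and exactness of $\mathbb{K}\otimes_\k-$ for (ii). Your additional Smith-normal-form alternative is not in the paper but is a perfectly good, more explicit, substitute.
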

\begin{proof}
Observe that $\im\phi$ is free, as a subgroup of $B$.
It follows that we have a short exact sequence
$$0\to \k\otimes \ker\phi\to \k\otimes A\to \k\otimes \im\phi\to 0.$$
Since $\operatorname{id}_\k\otimes \phi$ factors through $\k\otimes \im\phi$, the inclusion in (i) follows. 
The equality in (i) in characteristic zero follows since $\k$ is then flat over $\mZ$.

Part (ii) is obvious, since $\mathbb{K}\otimes_\k-$ is exact.
\end{proof}

Denote by $\delta^r_{m|n}(\k)$ the dimension of the right-hand side in~\eqref{EqPhi}:
$$\delta^r_{m|n}(\k):=\dim_\k\End_{GL({m|n})}(V_{m|n}^{\otimes r}).$$

\begin{prop}\label{PropBound} Fix $r,m,n$.
The value $\delta^{r}_{m|n}(\k)$ depends only on $p=\mathrm{char}(\k)$, and
$$\delta^r_{m|n}(\k)\;\ge\; \delta^r_{m|n}(\mathbb{Q}).$$
\end{prop}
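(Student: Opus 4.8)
The statement has two parts: (a) $\delta^r_{m|n}(\k)$ depends only on $p=\charr(\k)$, and (b) $\delta^r_{m|n}(\k)\ge \delta^r_{m|n}(\mathbb{Q})$. The plan is to realize $\End_{GL(m|n)}(V_{m|n}^{\otimes r})$ as the kernel of an explicit $\mZ$-linear map between free $\mZ$-modules of finite rank, and then invoke Lemma~\ref{StandLem}. The key observation is that a $\k$-linear endomorphism $f$ of $V_{m|n}^{\otimes r}$ is $GL(m|n)$-equivariant if and only if it commutes with the action of the distribution algebra $\operatorname{Dist}(GL(m|n))$ (equivalently, with the hyperalgebra $U_\k$ obtained by base change from the Kostant $\mZ$-form), and the latter is a purely $\mZ$-defined condition: it suffices to impose commutation with a fixed $\mZ$-basis of $U(\mathfrak{gl}(m|n,\mC))_\mZ$ acting on the lattice $\operatorname{End}(V_{m|n,\mZ}^{\otimes r})$. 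Concretely, fix a $\mZ$-basis $\{x_1,\dots,x_N\}$ of $U(\mathfrak{gl}(m|n,\mC))_\mZ$ (or of a finite generating subset in each relevant weight-degree — only finitely many matter since $V^{\otimes r}$ is finite-dimensional), and let $A=\operatorname{End}_\mZ(V_{m|n,\mZ}^{\otimes r})$, $B=A^{\oplus N}$, with $\phi\colon A\to B$ given by $f\mapsto (fx_i - (-1)^{|f||x_i|}x_i f)_i$, the supercommutators. Both $A$ and $B$ are free abelian of finite rank, and by construction $\ker(\operatorname{id}_\k\otimes\phi)=\End_{GL(m|n)}(V_{m|n}^{\otimes r})$ for the field $\k$ (here I use that over a field $\operatorname{Dist}$-equivariance coincides with $GL$-equivariance, which is standard for the general linear supergroup, cf. the discussion in~\ref{SchurGL} and \cite[Corollary~3.5]{BK}).

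With this setup, part (a) is immediate from Lemma~\ref{StandLem}(ii): over any two fields $\k_1,\k_2$ of the same characteristic $p$, the dimension $\dim_{\k_i}\ker(\operatorname{id}_{\k_i}\otimes\phi)$ equals $\operatorname{rank}_{\mZ/p}\ker(\operatorname{id}_{\mZ/p}\otimes\phi)$ — more precisely one passes to the prime field $\mF_p$ (or $\mQ$ when $p=0$) and uses that $\ker$ commutes with field extension; the rank is then a numerical invariant of the reduction mod $p$ of the matrix of $\phi$, hence depends only on $p$. For part (b), apply Lemma~\ref{StandLem}(i) with the field $\k$: it gives $\k\otimes\ker\phi\subseteq \ker(\operatorname{id}_\k\otimes\phi)$, so
$$\dim_\k\ker(\operatorname{id}_\k\otimes\phi)\;\ge\;\dim_\k(\k\otimes\ker\phi)\;=\;\operatorname{rank}_\mZ\ker\phi.$$
On the other hand, again by Lemma~\ref{StandLem}(i) applied with $\k=\mQ$ (the equality case), $\operatorname{rank}_\mZ\ker\phi=\dim_\mQ\ker(\operatorname{id}_\mQ\otimes\phi)=\delta^r_{m|n}(\mQ)$. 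Chaining these gives $\delta^r_{m|n}(\k)\ge\delta^r_{m|n}(\mQ)$, which is (b).

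The main obstacle — really the only non-formal point — is justifying that the kernel of $\operatorname{id}_\k\otimes\phi$ genuinely computes $\End_{GL(m|n)}(V^{\otimes r})$ over an arbitrary field, i.e. that $GL(m|n)$-equivariance of an endomorphism of a (finite-dimensional, polynomial-degree-$r$) tensor power is detected by the distribution algebra / hyperalgebra, and that finitely many $\mZ$-defined supercommutator conditions suffice. This is exactly the dictionary recalled in~\ref{SchurGL}: the category of degree-$r$ polynomial $GL(m|n)$-modules is identified with $S(m|n,r)$-modules, equivalently with a subcategory of $\operatorname{Dist}(GL(m|n))$-modules, and $\operatorname{Dist}(GL(m|n))\cong U_\k$ is base-changed from the Kostant $\mZ$-form. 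Since $V_{m|n}^{\otimes r}$ and its endomorphism algebra are finite-dimensional, only the finitely many divided-power generators $E_{i,j}^{(c)}$ (and analogous odd and torus generators) with $c\le r$ are relevant, and each such generator preserves the lattice $\operatorname{End}(V_{m|n,\mZ}^{\otimes r})$; this makes $\phi$ an honest homomorphism of finitely generated free abelian groups, as required by Lemma~\ref{StandLem}. Once this identification is in place, the proof is a two-line application of the lemma.
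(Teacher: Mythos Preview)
Your proof is correct and follows essentially the same approach as the paper: realize $\End_{GL(m|n)}(V^{\otimes r})$ as $\ker(\k\otimes f)$ for a single $\mZ$-linear map $f:A\to A^l$ between finitely generated free abelian groups with $A=\End(V_{m|n,\mZ}^{\otimes r})$, and then invoke Lemma~\ref{StandLem}. The paper simply asserts the existence of such an $f$, whereas you spell out its construction via supercommutators with a finite set of hyperalgebra generators and justify the identification through the $\operatorname{Dist}(GL(m|n))\cong U_\k$ dictionary of~\ref{SchurGL}; this extra detail is accurate and helpful but not a different method.
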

\begin{proof}
Set 
$$A=\End(V_{m|n,\mZ}^{\otimes r})\simeq \mZ^{(m+n)^{2r}}.$$
Then there exists a homomorphism
$$f:A\to A^l$$
of abelian groups, for some $l\in\mN$, such that 
$$\End_{GL({m|n})}( V_{m|n}^{\otimes r})\;\simeq\;\ker(\k\otimes f).$$
The statements now follow from Lemma~\ref{StandLem}.
\end{proof}

%The kernel $K=\ker f$ is a free abelian group and its rank is equal to the dimension over $\mQ$ of 
%$$\mQ\otimes K\;\simeq\; \ker(\mQ\otimes f).$$

%In general, we have a surjection
%$$\k\otimes K\;\tto\; \ker(\k\otimes f)$$
%which shows the desired inequality.

%That $\delta^d_{m|n}(\k)$ is equal to the value for the prime field of $\k$ also follows from the above observations.

\begin{proof}[Proof of Theorem~\ref{ThmSurj}(i) and (ii)] By definition, $\Phi^{(r)}_{m|n}$ is not injective for
\begin{equation}\label{EqS1}
r_p(m,n)< r
\end{equation} while, by Remark~\ref{RP}(i), $\Phi^{(r)}_{m|n}$ is injective (and surjective) for $\k=\mQ$ under the assumption 
\begin{equation}\label{EqS2}r\le m+n+mn.\end{equation} It thus follows from Proposition~\ref{PropBound} that under \eqref{EqS2} the dimension of the right-hand side in~\eqref{EqPhi} is at least that of the left-hand side. If additionally \eqref{EqS1} holds, it thus follows that $\Phi$ is not surjective as it is not injective. This proves part (i).

For part (ii), we can focus on the case $m\ge n>0$. By part (i) and Lemma~\ref{LUpper} a necessary condition for surjectivity is
$$r_p(m,n)=m+n+mn=t^{n+1}_p(m,n).$$
By Theorem~\ref{TMain0}, for instance using Remark~\ref{RemMain}, this condition is equivalent to the condition $p< m+n$.
\end{proof}

\subsection{Invariant theory for $GL(1|1)$} Here we always assume $p\not=2$.
Theorem~\ref{ThmSurj}(iii) follows from the following theorem.
\begin{theorem}\label{Thm11}
For $\Phi^{(r)}_{1|1}:\k S_r\to \End_{GL(1|1)}(V_{1|1}^{\otimes r})$ in \eqref{EqPhi}, we have
$$\dim_{\k}\End_{GL(1|1)}(V_{1|1}^{\otimes r})\;=\; \binom{2r-2}{r-1}\;=\; \dim_{\k}\im\Phi^{(r)}_{1|1},$$
for any $p\not=2$.

\end{theorem}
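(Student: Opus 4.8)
Throughout write $\delta:=\dim_\k\End_{GL(1|1)}(V_{1|1}^{\otimes r})$. The plan is to trap the common value between the easy bounds
$$\dim_\k\im\Phi^{(r)}_{1|1}\;\le\;\binom{2r-2}{r-1}\;\le\;\delta$$
and then to close the remaining gap by a direct analysis of $V_{1|1}^{\otimes r}$.

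\emph{The two easy bounds.} In characteristic zero, \eqref{ETensWts} and \eqref{EVMIso} give $V^{\otimes r}_{1|1,\mathbb{Q}}\cong\bigoplus_{a=0}^r M^{(a)|(r-a)}$, and the dual Pieri rule gives $M^{(a)|(r-a)}_{\mathbb{Q}}\cong S^{(a,1^{r-a})}_{\mathbb{Q}}\oplus S^{(a+1,1^{r-a-1})}_{\mathbb{Q}}$ (with the usual boundary conventions), so the Specht modules occurring in $V^{\otimes r}_{1|1,\mathbb{Q}}$ are exactly the hook ones. Since $\Phi^{(r)}_{1|1,\mathbb{Q}}$ is surjective by the classical first fundamental theorem, $\dim_\mathbb{Q}\End_{GL(1|1,\mathbb{Q})}(V^{\otimes r}_{1|1,\mathbb{Q}})=\dim_\mathbb{Q}\im\Phi^{(r)}_{1|1,\mathbb{Q}}$ equals $\sum_{e=0}^{r-1}(\dim_\mathbb{Q}S^{(r-e,1^e)}_\mathbb{Q})^2=\sum_{e=0}^{r-1}\binom{r-1}{e}^2=\binom{2r-2}{r-1}$. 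Proposition~\ref{PropBound} then yields $\delta\ge\binom{2r-2}{r-1}$ for all $p\neq2$. On the other side $\im\Phi^{(r)}_{1|1,\mathbb{Z}}$ is a free $\mathbb{Z}$-module of rank $\binom{2r-2}{r-1}$, so $\ker\Phi^{(r)}_{1|1,\mathbb{Z}}$ is a direct summand of $\mathbb{Z}S_r$ of rank $r!-\binom{2r-2}{r-1}$; by Lemma~\ref{StandLem}(i) we have $\k\otimes\ker\Phi^{(r)}_{1|1,\mathbb{Z}}\subseteq\ker\Phi^{(r)}_{1|1}$, whence $\dim_\k\im\Phi^{(r)}_{1|1}\le\binom{2r-2}{r-1}$. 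Thus the theorem is equivalent to the two inequalities $\dim_\k\im\Phi^{(r)}_{1|1}\ge\binom{2r-2}{r-1}$ and $\delta\le\binom{2r-2}{r-1}$.

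\emph{Closing the gap.} For $r\le3$ both inequalities are immediate: $r_p(1,1)=3$ by Theorem~\ref{TMain0}, so $V_{1|1}^{\otimes r}$ is a faithful, hence (by Lemma~\ref{LemDim}(ii)) projective-generating, $\k S_r$-module, and the double centralizer property gives $\End_{GL(1|1)}(V_{1|1}^{\otimes r})=\im\Phi^{(r)}_{1|1}=\k S_r$ with $r!=\binom{2r-2}{r-1}$. For $r\ge4$ I would study $V_{1|1}^{\otimes r}$ as a module over $GL(1|1)$, equivalently over $\Dist(GL(1|1))=U_\k$, via the odd operators $d_\pm:=E_{1,1'},E_{1',1}\in U_\k$ from~\ref{basis}: one has $d_\pm^2=0$ by \eqref{EOddSquare} and $d_+d_-+d_-d_+=E_{1,1}+E_{1',1'}=r\cdot\operatorname{id}$ by \eqref{ECommE}, the complex $(V_{1|1}^{\otimes r},d_+)$ is the $r$-fold tensor product of the two-term acyclic complex $(V_{1|1},d_+)$ (with Koszul signs matching the super-derivation action) and is therefore acyclic, likewise for $d_-$, and — using the divided powers $\binom{E_{1,1}}{k}$ — a $\k$-endomorphism of $V_{1|1}^{\otimes r}$ is $GL(1|1)$-equivariant precisely when it preserves the weight spaces $W_a:=(V_{1|1}^{\otimes r})_{(a)|(r-a)}$ and commutes with $d_+$ and $d_-$. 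When $p\nmid r$ the homotopy $r^{-1}d_-$ splits $(W_\bullet,d_+)$ as a complex of $\k S_r$-modules, exhibiting $V_{1|1}^{\otimes r}$ as a direct sum of $2$-dimensional typical irreducible $GL(1|1)$-modules, the one with highest weight $c\varepsilon_1+(r-c)\varepsilon'_1$ occurring $\binom{r-1}{c-1}$ times; as these have distinct highest weights this yields $\delta=\sum_c\binom{r-1}{c-1}^2=\binom{2r-2}{r-1}$ directly, and one reads off the $\k S_r$-module structure to get $\dim_\k\im\Phi^{(r)}_{1|1}\ge\binom{2r-2}{r-1}$ via Lemma~\ref{LemDim}(i) and Theorem~\ref{TDKlyachko}. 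When $p\mid r$ every weight of $V_{1|1}^{\otimes r}$ is atypical, the operator $D:=d_++d_-$ satisfies $D^2=0$, and one must instead determine the indecomposable ($GL(1|1)$-tilting, resp.\ sign-Young) summands of $V_{1|1}^{\otimes r}$ explicitly inside the atypical block of $GL(1|1)$ and compute the two dimensions from that description.

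The main obstacle is exactly this atypical case $p\mid r$: there $V_{1|1}^{\otimes r}$ is semisimple neither over $GL(1|1)$ nor over $\k S_r$ — already $V_{1|1}^{\otimes p}$ has the non-semisimple projective cover of the trivial $\k S_p$-module (namely $M^{(p-1)|(1)}\cong\operatorname{Ind}_{S_{p-1}}^{S_p}\k$) as a direct summand — so no soft double-centralizer or Jacobson density argument applies, and one is forced into the combinatorics of the atypical block of $GL(1|1)$.
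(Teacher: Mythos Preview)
Your overall architecture is exactly that of the paper: bound the image from above via the integral form, bound $\delta$ from below via Proposition~\ref{PropBound}, and then close the gap by decomposing $V_{1|1}^{\otimes r}$ explicitly on both the $GL(1|1)$ side and the $\k S_r$ side, splitting into the cases $p\nmid r$ and $p\mid r$. Your treatment of the typical case $p\nmid r$ is essentially the paper's (Lemma~\ref{Lem1} together with the decomposition $V^{\otimes r}\simeq\bigoplus_i L(r-i|i)^{\binom{r-1}{i}}$).

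The gap is precisely the one you flag: for $p\mid r$ you describe what has to be done but do not do it. The missing observation on the $GL(1|1)$ side is short. Since $p\neq 2$, the natural module $V=L(1|0)$ is itself \emph{projective} (because $1+0\not\equiv 0$), hence so is $V^{\otimes r}$; the atypical projectives $P(r-i|i)$ are self-dual of Loewy length $3$ with middle layer $L(r-i+1|i-1)\oplus L(r-i-1|i+1)$, and character comparison forces
\[
V^{\otimes r}\;\simeq\;\bigoplus_{i=1}^{r-1}P(r-i|i)^{\oplus\binom{r-2}{i-1}},\qquad
\delta=\sum_{i=1}^{r-1}\binom{r-2}{i-1}\binom{r}{i}=\binom{2r-2}{r-1}
\]
by Vandermonde. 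So projectivity of $V$ replaces your homotopy $r^{-1}d_-$ and survives $p\mid r$. On the $\k S_r$ side the paper (Lemma~\ref{Lem2}) argues inductively that each hook Specht module $S^{\hook(r;i)}$ has length at most two, producing simples $D_i$ of dimension $\binom{r-2}{i}$ and the displayed socle filtrations of $M^{r-i|i}$; Lemma~\ref{LemDim}(i) then gives $\dim\im\Phi\ge\sum_i\binom{r-2}{i}\binom{r}{i+1}$, which must be an equality since $\im\Phi\subseteq\End_{GL(1|1)}(V^{\otimes r})$ and $\delta$ has just been computed. None of your proposed tools (the operator $D=d_++d_-$, tilting/sign-Young classification) turns out to be needed.
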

\begin{proof}
Here we demonstrate the left equality, while the right equality follows from Lemmata~\ref{Lem1}(iii) and~\ref{Lem2}(iii) below.

To prove the claim, we can use some basic facts about the representation theory of $GL(1|1)$. For $a,b\in\mZ$, the simple module $L(a|b)$ is projective if and only if it is two-dimensional (with lowest weight $a-1|b+1$) if and only if $a+b$ is not zero in $\k$. In the other cases the simple module is one-dimensional and its projective cover $P(a|b)$ is self-dual of Loewy-length $3$ with middle layer $L(a+1|b-1)\oplus L(a-1|b+1)$.

Consequently, if $p$ does not divide $r$ (or $p=0$), from elementary character comparison (where $D$ denotes the dimension of the endomorphism algebra)
$$V^{\otimes r}\;\simeq\; \bigoplus_{i=0}^{r-1} L(r-i|i)^{\oplus \binom{r-1}{i}},\quad\mbox{so}\;\, D=\sum_{i=0}^{r-1} \binom{r-1}{i}^2.$$
Since $V=L(1|0)$ is projective, it follows that all powers $V^{\otimes r}$ are projective.
If $p$ divides $r$, we thus similarly find
$$V^{\otimes r}\;\simeq\; \bigoplus_{i=1}^{r-1} P(r-i|i)^{\oplus \binom{r-2}{i-1}},\quad\mbox{so}\;\, D=\sum_{i=1}^{r-1} \binom{r-2}{i-1}\binom{r}{i}.$$
Both sums are equal to the claimed dimension, by Vandermonde's identity.
\end{proof}

\begin{remark}
Since the second equality in the theorem will also be obtained by explicit calculation, we also obtain rather explicit descriptions of the kernel of \eqref{EqPhi}. For instance, if $p$ does not divide $r$, the kernel, as a submodule of the regular module, corresponds to the radicals of the projective covers of the simple modules labelled by ($p$-regularisations of) hook partitions and the entire projective covers for the other simple modules.
\end{remark}

\subsubsection{} For $0<j\le r$, we denote by $\hook(r;j-1)$ the hook partition of size $r$ and length $j$. In other words 
$$\hook(r;i)\,=\,(r-i, 1^{i}),\qquad\mbox{for $0\le i<r$}.$$

We denote the $p$-regularisation (as defined in \cite[Section~6.3]{JK}) of a partition $\lambda$ by $\tR(\lambda)$. If $p=0$ then by convention $\tR(\lambda)=\lambda$.

\begin{lemma}
For $r\in\mZ_{>0}$ and $0\le j<i<r$, we have $\tR(\hook(r;i))=\tR(\hook(r;j))$ if and only if $p$ divides $r$ and 
$$i=(p-1)\frac{r}{p},\quad\mbox{and}\quad j=i-1.$$
\end{lemma}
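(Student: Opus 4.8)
The plan is to recall the explicit combinatorial description of $p$-regularisation via the "ladder" procedure of \cite[Section~6.3]{JK} and apply it directly to hook partitions. For a partition $\lambda$, the $p$-regularisation $\tR(\lambda)$ is obtained by sliding each box up along its ladder (the ladders being lines of a fixed slope, where two boxes $(a,b)$ and $(c,d)$ lie on the same ladder iff $a-c=(p-1)(d-b)$) to the highest available position. For a hook $\hook(r;i)=(r-i,1^i)$, the boxes in the first row are $(1,1),(1,2),\dots,(1,r-i)$ and the boxes in the first column are $(2,1),(3,1),\dots,(i+1,1)$. First I would identify which ladders meet both the arm and the leg: the box $(1,b)$ in the arm and a box $(a,1)$ in the leg share a ladder iff $a-1=(p-1)(b-1)$, i.e. $a=1+(p-1)(b-1)$.

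Next I would carry out the regularisation. The arm boxes $(1,b)$ for $b\ge 2$ are already at the top of their ladders, so they are fixed; the single box $(1,1)$ is fixed. The leg boxes $(a,1)$ with $a\ge 2$ slide up their ladders: the box $(a,1)$ moves into the arm precisely when $a-1$ is divisible by $p-1$ and there is room, landing in position $(1,1+\frac{a-1}{p-1})$; otherwise it drops as far up as it can but stays in a lower row (collecting on the short left-hand ladders). Carrying this through, one sees that $\tR(\hook(r;i))$ is again of hook-plus-correction shape determined by how many leg boxes, namely $\lfloor (i)/(p-1)\rfloor$ of them (indices $a=p, 2p-1, \ldots$), get absorbed into the first row; the resulting partition depends only on $r$ and on the pair $(i\bmod(p-1), \lfloor i/(p-1)\rfloor)$ together with the length constraints. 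From this one reads off exactly when two different hooks $\hook(r;i)$, $\hook(r;j)$ with $j<i$ regularise to the same partition.

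The key computation is then to show that the \emph{only} coincidence occurs when the longer hook's leg is "one box longer than the length where a box wants to jump but can't", which pins down $i=(p-1)\frac{r}{p}$ (forcing $p\mid r$) and $j=i-1$. Concretely: $\tR(\hook(r;i))$ has first part $r-i+\lfloor i/(p-1)\rfloor$ roughly, and requiring $\tR(\hook(r;i))=\tR(\hook(r;j))$ forces the number of absorbed leg boxes to differ by exactly one while the total size $r$ is preserved and the shapes still match; a short case analysis on residues mod $p-1$ shows this is possible only at the stated values. The converse — that at $i=(p-1)r/p$ and $j=i-1$ the two regularisations genuinely agree — I would verify by writing out both hooks explicitly ($\hook(r; (p-1)r/p)$ has arm length $r/p$ and leg length $(p-1)r/p$) and running the ladder slides on each, checking the outputs coincide.

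The main obstacle I expect is bookkeeping in the ladder slides: one must track precisely where each leg box lands, since leg boxes that do \emph{not} reach the first row still move up and can collide with each other on the short ladders near the left edge, altering the shape in a way that must be compared across the two hooks. Getting the off-by-one in the leg length to translate cleanly into "same regularised partition" without any further coincidences is the delicate point; I would handle it by comparing the two regularised partitions part-by-part rather than trying to find a closed formula for $\tR(\hook(r;i))$ in full generality. Alternatively, if the ladder manipulation gets unwieldy, I would fall back on the characterisation that $\tR(\lambda)=\tR(\mu)$ iff $S^\lambda$ and $S^\mu$ have the same $p$-regular top (equivalently lie in the same block with matching regularisation), and use known facts about decomposition numbers of Specht modules labelled by hooks (which are completely understood, e.g. via the Peel--James results) to extract the coincidence condition — but the direct ladder argument is cleaner and self-contained.
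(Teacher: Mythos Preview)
Your overall approach---compute $\tR(\hook(r;i))$ explicitly via the ladder procedure and then compare---is exactly what the paper does. However, your intermediate computation is wrong in a way that would derail the argument as sketched.

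You claim that $\lfloor i/(p-1)\rfloor$ leg boxes get absorbed into the first row, and hence that the first part of $\tR(\hook(r;i))$ is ``roughly'' $r-i+\lfloor i/(p-1)\rfloor$. This is false in the generic (long-arm) case. The leg box $(a,1)$ with $a-1=(p-1)k$ wants to land at $(1,1+k)$, but that node is \emph{already occupied by the arm} whenever $1+k\le r-i$. So as long as $i<(p-1)r/p$, \emph{no} leg box reaches row~$1$; instead the leg boxes slide into rows $2,\dots$ and the first part of the regularisation remains exactly $r-i$. Your heuristic ``match first parts, forcing the absorbed counts to differ by one'' therefore compares the wrong invariant and gives no information in this regime.

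The paper fixes this by writing $i=(p-1)a+b$ with $0<b<p$ and deriving a closed two-case formula:
\[
\tR(\hook(r;i))\;=\;
\begin{cases}
(r-i,\ (a+1)^{\,b},\ a^{\,p-1-b}) & \text{if } a+1<r-i,\\[2pt]
((a_1+1)^{\,b_1},\ a_1^{\,p-1-b_1},\ r-i-1) & \text{if } a+1\ge r-i,
\end{cases}
\]
where $i+1=(p-1)a_1+b_1$ with $0<b_1<p$. The case split is precisely ``does any leg box overshoot the arm?''. With this formula in hand, a coincidence $\tR(\hook(r;i))=\tR(\hook(r;j))$ for $j<i$ can only occur when one hook is in the first case and the other in the second; matching the shapes then forces $b_1=1$, $a_1=r-i$, $j=i-1$, and hence $pi=(p-1)r$. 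Your plan will go through once you replace the incorrect ``absorbed into row~1'' picture with this two-case description.
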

\begin{proof}
%It follows easily that, for $\lfloor \frac{i}{p-1}\rfloor < r-i$, we have
%$$\tR(\hook(r;i))_1=r-i,\quad\mbox{and}\quad \tR(\hook(r;i))_p=\lfloor \frac{i}{p-1}\rfloor.$$
%On the other hand, if $\lfloor \frac{i}{p-1}\rfloor \ge r-i$, then
%$$\tR(\hook(r;i))_1=\lfloor \frac{i}{p-1}\rfloor+1,\quad\mbox{and}\quad \tR(\hook(r;i))_p=r-i-1.$$

For $0\le i<r$, we can write
$$i=(p-1)a+b\quad\mbox{and}\quad i+1 = (p-1)a_1+b_1$$
for unique $a,b,a_1,b_1\in\mN$ with $0<b,b_1<p$. For instance $a_1=a$ and $b_1=b+1$ whenever $b<p-1$. It then follows easily that
$$\tR(\hook(r;i))\;=\;\begin{cases}
(r-i, (a+1)^b, a^{p-1-b}) & \mbox{ if $a+1<r-i$}\\
((a_1+1)^{b_1}, a_1^{p-1-b_1}, r-i-1)& \mbox{ if $a+1\ge r-i$}.
\end{cases}$$

The only case in which the regularisations could become identical is if for $\hook(r;j)$ we are in the first line and for $\hook(r;i)$ on the second line (up to $i\leftrightarrow j$). We use the notation $j=(p-1)a'+b'$. Clearly we must have $b_1=1$ and either $a_1+1=r-i$ (in which case $b'=p-1$) or $a_1=r-i$. The first option for $a_1$ is not relevant, because it would require $r-j=a_1+1=r- i$ and hence $j=i$, a contradiction. So we have $a_1=r-i$ (implying already $j=i-1$), which, together with $b_1=1$, shows indeed $pi=(p-1)r$.
\end{proof}

\subsubsection{}

%Consider the sign-permutation module
%$$M^{a|b}:=\k S_{a+b}\otimes_{\k(S_a\times S_b)}(\mathrm{tr}\boxtimes\mathrm{sgn}).$$
Since $V=\k^{1|1}$, we have
$$V^{\otimes r}\;\simeq\; \bigoplus_{a+b=r} M^{a|b}$$
as an $\k S_r$-module.

By the Littlewood-Richardson rule, for $0<a,b$, the module $M^{a|b}$ has a filtration with subquotients the Specht modules $S^{\hook(r;b)}$ and $S^{\hook(r;b-1)}$.

\begin{lemma}\label{Lem1}
Fix $r\in\mN$ not divisible by $p$, or assume $p=0$.
\begin{enumerate}
\item[{\rm (i)}]  For $a+b=r$ with $0<a,b$, in $\Rep_\k S_r$
$$M^{a|b}\;\simeq\; S^{\hook(r;b)}\oplus S^{\hook(r;b-1)}.$$
\item[{\rm (ii)}]  For $0\le i<r$, the Specht module $S^{\hook(r;i)}$ is simple with 
$$\dim_\k S^{\hook(r;i)}\;=\;\binom{r-1}{i}.$$
\item[{\rm (iii)}]  The dimension of the image of $\Phi_{1|1}^{(r)}$
is $\binom{2r-2}{r-1}$.
\end{enumerate}
\end{lemma}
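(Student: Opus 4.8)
The plan is to prove the three parts in order, exploiting the structure of $GL(1|1)$-representations recalled in the proof of Theorem~\ref{Thm11}.

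For part (i), I would start from the decomposition $V^{\otimes r}\simeq\bigoplus_{a+b=r}M^{a|b}$ and the Littlewood–Richardson rule, which gives each $M^{a|b}$ (for $0<a,b$) a two-step filtration with subquotients $S^{\hook(r;b)}$ and $S^{\hook(r;b-1)}$, while $M^{r|0}\simeq S^{(r)}$ and $M^{0|r}\simeq S^{(1^r)}$. Since $p\nmid r$, I want to argue that this filtration splits. The cleanest route is a dimension/character count: the summands $L(r-i|i)$ of $V^{\otimes r}$ in the proof of Theorem~\ref{Thm11} are all simple and pairwise non-isomorphic, and under the natural functor ${\mathcal F}_{r|r,r}$ (or directly by comparing with the Schur-superalgebra picture) each hook Specht module corresponds to one such simple. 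More concretely, because $p\nmid r$ the hook partitions $\hook(r;i)$ are $p$-regular and, by the preceding lemma, have pairwise distinct $p$-regularisations, so the modules $S^{\hook(r;i)}$ are simple and pairwise non-isomorphic; thus the filtration of $M^{a|b}$ is a direct sum. The key input here is that $S^{\hook(r;i)}$ is simple when $p\nmid r$, which follows from the fact that a hook Specht module is irreducible precisely when $p\nmid r$ (classical; or deduce it from the semisimplicity of the relevant block / from the $GL(1|1)$ side where every $L(r-i|i)$ is projective).

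Part (ii) is then immediate: the hook Specht module $S^{\hook(r;i)}$ has $\mZ$-rank $\binom{r-1}{i}$ (standard count of standard tableaux of hook shape), and by (i) it is simple, so its dimension over $\k$ is $\binom{r-1}{i}$.

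For part (iii), by Lemma~\ref{LemDim}(i) the dimension of the image of $\Phi^{(r)}_{1|1}$ equals $\sum_i\dim_\k L(i)\dim_\k(P(i)/K(i))$ where the sum is over simple $\k S_r$-modules. Since $p\nmid r$, the algebra $\k S_r$ has the hook simples $D^{\tR(\hook(r;i))}$, $0\le i<r$, occurring in $V^{\otimes r}$, and each is projective (indeed $V^{\otimes r}$ is a projective $\k S_r$-module since $V=L(1|0)$ is projective over $GL(1|1)$, forcing semisimplicity in the relevant context). Hence $P(i)/K(i)=P(i)=D^{(i)}$ for these $i$ and $P(i)/K(i)=0$ otherwise, so $\dim\im\Phi^{(r)}_{1|1}=\sum_{i=0}^{r-1}\bigl(\dim_\k S^{\hook(r;i)}\bigr)^2=\sum_{i=0}^{r-1}\binom{r-1}{i}^2=\binom{2r-2}{r-1}$ by Vandermonde. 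Alternatively, and more self-containedly, I can just compute $\dim\im\Phi^{(r)}_{1|1}$ as $\dim_\k\End_{\k S_r}$-endomorphism-free part: since $V^{\otimes r}$ is a direct sum of the pairwise non-isomorphic simples $S^{\hook(r;i)}$ with multiplicities $\binom{r-1}{i}$ (read off from (i): $S^{\hook(r;i)}$ appears in $M^{r-i|i}$ and $M^{r-i-1|i+1}$, so with total multiplicity $\binom{r-1}{i}$), the image of $\k S_r$ in $\End_\k(V^{\otimes r})$ is $\bigoplus_i\End_\k(S^{\hook(r;i)})$, of dimension $\sum_i\binom{r-1}{i}^2=\binom{2r-2}{r-1}$.

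The main obstacle is part (i): establishing that the Littlewood–Richardson filtration of $M^{a|b}$ actually splits and that the hook Specht modules are simple and pairwise non-isomorphic when $p\nmid r$. Everything else is bookkeeping with binomial coefficients and Vandermonde's identity. I would handle the obstacle by the multiplicity count above — comparing the total multiplicity of each $S^{\hook(r;i)}$ across all the $M^{a|b}$ with the known decomposition $V^{\otimes r}\simeq\bigoplus_i L(r-i|i)^{\oplus\binom{r-1}{i}}$ from the proof of Theorem~\ref{Thm11} — which simultaneously identifies $\dim_\k S^{\hook(r;i)}=\binom{r-1}{i}$, forces each $S^{\hook(r;i)}$ to be simple, and yields the splitting.
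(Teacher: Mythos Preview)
Your argument for part~(i) has a genuine gap: from ``$S^{\hook(r;b)}$ and $S^{\hook(r;b-1)}$ are simple and non-isomorphic'' you conclude that the filtration of $M^{a|b}$ splits, but this inference is invalid --- two non-isomorphic simples can perfectly well have nontrivial extensions. (Along the way you also assert that all hook partitions are $p$-regular, which fails once $r\ge p$.) Your fallback comparison with the $GL(1|1)$-decomposition $V^{\otimes r}\simeq\bigoplus_i L(r-i|i)^{\oplus\binom{r-1}{i}}$ does not rescue this: that is a statement about the $GL(1|1)$-module structure and does not by itself force the Specht filtration on the $\k S_r$-side to split or the Specht modules to be simple. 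The paper's argument is different and does not use simplicity for the splitting at all: when $p\nmid r$ the consecutive hooks $\hook(r;b)$ and $\hook(r;b-1)$ lie in \emph{different $p$-blocks} of $\k S_r$ (their content multisets mod $p$ differ precisely because $-b\not\equiv r-b\pmod p$), so the two-step filtration splits automatically; self-duality of $M^{a|b}$ then does the remaining work.

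There is also a harmless slip in~(iii): each $S^{\hook(r;i)}$ appears in exactly two of the $M^{a|b}$ (namely $M^{r-i|i}$ and $M^{r-i-1|i+1}$, with the obvious conventions at the endpoints), so its multiplicity in $V^{\otimes r}$ is $2$, not $\binom{r-1}{i}$. Your final computation is unaffected, since once $V^{\otimes r}$ is a direct sum of the pairwise non-isomorphic simples $S^{\hook(r;i)}$ the image of $\k S_r$ in $\End_\k(V^{\otimes r})$ is $\prod_i\End_\k(S^{\hook(r;i)})$ regardless of the positive multiplicities, of dimension $\sum_i\binom{r-1}{i}^2=\binom{2r-2}{r-1}$.
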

\begin{proof}
Part (i) follows from the fact that both Specht modules are in different blocks and self-duality. All dimension calculations are then immediate.
\end{proof}

\begin{lemma}\label{Lem2}
Fix $r\in\mN$ divisible by $p>2$.
\begin{enumerate}
\item[{\rm (i)}]  There are non-isomorphic simple $\k S_r$-modules $\{D_i\,|\, 0\le i\le r-2\}$ with
$$\dim_\k D_i \;=\;\binom{r-2}{i}$$
such that $S^{\hook(r;0)}\simeq D_0$ is the trivial module, $S^{\hook(r;r-1)}\simeq D_{r-2}$ is the sign module, and there are short exact sequences
$$0\to D_{i-1} \to S^{\hook(r,i)}\to D_i\to 0,\quad\mbox{for}\quad 0<i<r-1.$$
\item[{\rm (ii)}]  We have $M^{r|0}\simeq D_0$, $M^{0|r}\simeq D_{r-2}$ and the socle filtrations
$$M^{r-1|1}:\begin{array}{c}D_0\\ D_1\\ D_0\end{array},\quad M^{r-i|i}: \begin{array}{c}D_{i-1}\\ D_{i-2}\oplus D_{i}\\ D_{i-1}\end{array}\;\mbox{ for }1<i<r-1,\quad\mbox{and}\quad M^{1|r-1}:\begin{array}{c}D_{r-2}\\ D_{r-3}\\ D_{r-2}\end{array}.
$$
\item[{\rm (iii)}]  The dimension of the image of $\Phi_{1|1}^{(r)}$
is $\binom{2r-2}{r-1}$.
\end{enumerate}
\end{lemma}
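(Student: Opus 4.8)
\textbf{Plan for the proof of Lemma~\ref{Lem2}(iii).}
The plan is to deduce the dimension of the image of $\Phi^{(r)}_{1|1}$ from the structural data in parts (i) and (ii), using Lemma~\ref{LemDim}(i). Since $\k S_r$ is self-injective, the image of $\k S_r \to \End_\k(V^{\otimes r})$ on the summand $Ae \simeq P^\lambda$ is controlled by how many times $P^\lambda$ embeds into $V^{\otimes r}$; more precisely, by Lemma~\ref{LemDim}(i) the dimension of the image equals $\sum_i \dim_\k L(i)\cdot \dim_\k(P(i)/K(i))$, where $K(i)$ is the intersection of the kernels of all maps $P(i)\to V^{\otimes r}$. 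Because $V^{\otimes r}\simeq\bigoplus_{a+b=r}M^{a|b}$ and each $M^{a|b}$ is self-dual with the socle structure listed in part (ii), each $M^{a|b}$ is in fact a \emph{projective} $\k S_r$-module (a self-dual module with simple socle and simple head and the displayed Loewy length $3$ structure, occurring as a summand of the self-injective $V^{\otimes r}$, is the projective cover of its socle). Concretely, $M^{r-i|i}\simeq P^{\tR(\hook(r;i))}$ for the appropriate $p$-regularised hook, with the two boundary cases $M^{r|0}$, $M^{0|r}$ being the (projective, since $p\mid r$ forces $P^{(r)}$ to be... no: here $D_0, D_{r-2}$ are the trivial and sign modules, which are \emph{not} projective) — so in those two boundary cases $M^{r|0}=D_0$ and $M^{0|r}=D_{r-2}$ contribute only their own dimension, not that of a full projective cover.

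First I would determine exactly which indecomposable projectives occur as summands of $V^{\otimes r}=\bigoplus_{a+b=r}M^{a|b}$ and with what multiplicity. By part (ii), for $1< i < r-1$ the module $M^{r-i|i}$ is the projective cover of $D_{i-1}$, i.e. $P_{i-1}$, and $M^{r-1|1}=P_0$, $M^{1|r-1}=P_{r-2}$; the two remaining summands are $M^{r|0}=D_0$ and $M^{0|r}=D_{r-2}$, which are simple non-projective. So $V^{\otimes r}$ contains $P_0, P_1,\dots,P_{r-2}$ each exactly once, plus copies of $D_0$ and $D_{r-2}$. Since every indecomposable projective appears, $K(i)=0$ for every $i$, and Lemma~\ref{LemDim}(i) gives $\dim_\k \im\Phi^{(r)}_{1|1}=\sum_{i=0}^{r-2}\dim_\k D_i\cdot\dim_\k P_i$. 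Thus I need $\dim_\k P_i$. The Loewy data in part (ii) identifies the composition factors of each $P_i$: from the socle filtrations, $P_0$ has factors $D_0,D_1,D_0$; for $0<i<r-2$ the module $P_i$ (realised as $M^{r-i-1|i+1}$) has factors $D_i, D_{i-1}\oplus D_{i+1}, D_i$; and $P_{r-2}$ has factors $D_{r-2},D_{r-3},D_{r-2}$. Hence $\dim_\k P_i = 2\dim_\k D_i + \dim_\k D_{i-1}+\dim_\k D_{i+1}$ for $0<i<r-2$ (with the convention that $D_{-1}=D_{r-1}=0$ covers the boundary cases $i=0$ and $i=r-2$ as well, reading the socle-filtration displays literally).

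Now I would plug in $\dim_\k D_i=\binom{r-2}{i}$ from part (i) and compute:
$$
\dim_\k\im\Phi^{(r)}_{1|1}\;=\;\sum_{i=0}^{r-2}\binom{r-2}{i}\Bigl(2\binom{r-2}{i}+\binom{r-2}{i-1}+\binom{r-2}{i+1}\Bigr).
$$
Reindexing the cross terms (the sum $\sum_i\binom{r-2}{i}\binom{r-2}{i-1}$ and $\sum_i\binom{r-2}{i}\binom{r-2}{i+1}$ are both equal to $\binom{2r-4}{r-3}$ by Vandermonde, while $\sum_i\binom{r-2}{i}^2=\binom{2r-4}{r-2}$), this becomes
$$
2\binom{2r-4}{r-2}+2\binom{2r-4}{r-3}\;=\;2\Bigl(\binom{2r-4}{r-2}+\binom{2r-4}{r-3}\Bigr)\;=\;2\binom{2r-3}{r-2}\;=\;\binom{2r-2}{r-1},
$$
using Pascal's rule twice. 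The main obstacle is not the final combinatorial identity — that is routine Vandermonde/Pascal manipulation — but rather the structural bookkeeping: being careful that the summands $M^{r|0}$ and $M^{0|r}$ are simple (not projective) and hence contribute only $\dim D_0$ and $\dim D_{r-2}$ and not a full $\dim P_0$ or $\dim P_{r-2}$, and that in fact the boundary summands $M^{r-1|1}$ and $M^{1|r-1}$ already supply $P_0$ and $P_{r-2}$, so every indecomposable projective is accounted for exactly once and $K(i)=0$ throughout. Once that is pinned down, the computation closes. As an independent check one can note the answer matches $\dim_\k\End_{GL(1|1)}(V^{\otimes r})=\binom{2r-2}{r-1}$ already established in the proof of Theorem~\ref{Thm11}, confirming that $\Phi^{(r)}_{1|1}$ is surjective.
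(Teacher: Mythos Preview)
Your computation is numerically correct but the structural claim underpinning it is false. You assert that each $M^{r-i|i}$ (for $1\le i\le r-1$) is projective, specifically the projective cover $P_{i-1}$ of $D_{i-1}$, on the grounds that it is self-dual with simple head and socle. This implication fails: a self-dual module with simple head and socle is a quotient of $P_{\text{head}}$ and a submodule of $P_{\text{socle}}$, but need not equal either. Concretely, take $p=3$, $r=6$. Then $M^{5|1}=\Ind_{S_5}^{S_6}\mathrm{triv}$ is the ordinary permutation module $M^{(5,1)}$, which is indecomposable (simple socle), hence equals the Young module $Y^{(5,1)}$. But $(5,1)$ is not $3$-restricted, so $Y^{(5,1)}$ is \emph{not} projective; the genuine projective cover $P_0$ of the trivial module is strictly larger. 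Thus $K_0\neq 0$, and your formula $\dim\im\Phi=\sum_i\dim D_i\cdot\dim P_i$ is not what you are actually computing.

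What you \emph{are} computing is $\sum_i\dim D_i\cdot\dim M^{r-i-1|i+1}$, and this is only a \emph{lower bound} for $\dim\im\Phi$: since $M^{r-i-1|i+1}$ has head $D_i$, there is a surjection $P_i\twoheadrightarrow M^{r-i-1|i+1}\hookrightarrow V^{\otimes r}$, whence $\dim(P_i/K_i)\ge\dim M^{r-i-1|i+1}$. Your combinatorial identity correctly evaluates this lower bound as $\binom{2r-2}{r-1}$. The argument is then closed not by a ``check'' but by the essential upper bound $\dim\im\Phi\le\dim\End_{GL(1|1)}(V^{\otimes r})=\binom{2r-2}{r-1}$, already established in Theorem~\ref{Thm11}. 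This squeeze is exactly the paper's proof; once you drop the projectivity claim and promote your final sentence from sanity check to load-bearing step, your argument coincides with it.
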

\begin{proof}
By restricting to $S_{r-1}<S_r$ and using Lemma~\ref{Lem1}, we can observe that each $S^{\hook(r;i)}$ can be of length at most two, and in case $i<r-1$ must contain a simple constituent which does not appear in any $S^{\hook(r;j)}$ for $j<i$. Moreover, it follows similarly that $S^{\hook(r;i)}$ and $S^{\hook(r;j)}$ have no simple constituent in common if $|i-j|>1$.

The forms of $M^{r-1|1}$ and $S^{\hook(r-1,1)}$ are well-known and we prove the remaining statements by induction:

Assume that for some $1<i<r-2$, we already know $S^{\hook(r;i-1)}$ has the stated filtration and the simple constituents have the stated dimension. The fact that $M^{r-i|i}$ must be self-dual forces $S^{\hook(r;i)}$ to be one of four possibilities. Either it is $D_{i-2}$, $D_{i-1}$, or is of length $2$ with $D_{i-1}$ in the socle or $D_{i-2}$ in the top. The first two options are excluded by the fact (see first paragraph) that $S^{\hook(r;i)}$ must contain a `new' simple constituent. We also know (again by the first paragraph) that $D_{i-2}$ cannot appear as a constituent in $S^{\hook(r;i)}$. Hence (by defining $D_i$ as the top of $S^{\hook(r-i;i)}$) we find the appropriate filtrations for $S^{\hook(r-i;i)}$, $M^{r-i|i}$ and the stated dimension 
$$\binom{r-2}{i}\;=\; \binom{r-1}{i}-\binom{r-2}{i-1}$$
for $D_i$. Concluding the cases $i=r-2,r-1$ is then easy.

By Lemma~\ref{LemDim}(i), the dimension of the image is at least
$$\sum_{i=0}^{r-2}\left(\dim_\k D_i\right)\dim_\k M^{r-i+1|i+1}=\sum_{i=0}^{r-2}\binom{r-2}{i}\binom{r}{i+1},$$
where equality would be guaranteed if we have isomorphisms between the two-dimensional quotients and submodules of the signed permutation modules. However, by the (already proved) left equality in Theorem~\ref{Thm11}, the above value is also the dimension of the target in \eqref{EqPhi}, so it is the maximal value of the image. Part (iii) follows.
\end{proof}

\begin{remark}
We can easily check that
$$D_i\;\simeq\;\begin{cases}
D^{\tR(\hook(r;i))}&\mbox{for }i<(p-1)r/p\\
D^{\tR(\hook(r;i+1))}&\mbox{for }i\ge (p-1)r/p.
\end{cases}$$
\end{remark}

%\begin{corollary}\label{Cor1}
%Let $K$ be an algebraically closed field (with $\mathrm{char}(K)\not=2$) and $V=K^{1|1}$ a super vector space of even and odd dimension $1$. The dimension of the image of
%$$KS_d\to\End_K(\otimes^d V)$$
%does not depend on the characteristic of $K$.
%\end{corollary}
%
%
%\begin{corollary}\label{Cor11}
%With notation as in Corollary~\ref{Cor1}, the morphism
%$$KS_d\to\End_{GL(V)}(\otimes^d V)$$
%is surjective for all $d\in\mN$.
%\end{corollary}
%\begin{proof}
%It follows from a direct computation that the dimension of the right-hand side does not depend on the characteristic.
%\end{proof}

\subsection{The orthosymplectic case}

Let $\cB_r(\delta)$ be the Brauer algebra on $r$ strands, with loops evaluated at $\delta\in \k$. We fix $m,n\in \mN$. For $V=\k^{m|2n}$, we have an algebra morphism
\begin{equation}
\label{EqPsi0}
\cB_r(m-2n)\;\to\; \End_{\k}( V^{\otimes r}),
\end{equation}
so that the restriction to $\k S_r\subset\cB_r(m-2n)$ corresponds to \eqref{EqPhi0}, see \cite{LZ}.

\begin{prop}\label{PropOSp1}${}$
\begin{enumerate}
\item[{\rm (i)}]  (\cite{Selecta, Yang}) If $p=0$, then \eqref{EqPsi0} is injective if and only if
$$r\;\le\; m+n+mn.$$
\item[{\rm (ii)}]  If $p>2$, then \eqref{EqPsi0} is not injective if
$$r\;>\;\min(m+n+mn,r_p(m,2n)).$$
\item[{\rm (iii)}]  If $p=2$, and $m$ even, then \eqref{EqPsi0} is not injective if
$$r\;>\; m/2+n.$$
\end{enumerate}
\end{prop}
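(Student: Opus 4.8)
The plan is to treat the three parts separately and to reduce both parts~(ii) and~(iii) to one base‑change principle. Part~(i) needs nothing from us: it is the characteristic‑zero second fundamental theorem of invariant theory for the orthosymplectic supergroup, so I would simply cite \cite{Selecta} and \cite{Yang}. For part~(ii) I would exhibit two independent families of kernel elements. The first lives in the group algebra: $\k S_r$ is a subalgebra of $\cB_r(m-2n)$, and, as recalled after \eqref{EqPsi0}, the restriction of \eqref{EqPsi0} to $\k S_r$ is the map \eqref{EqPhi0} for $V=V_{m|2n}$, hence factors as $\Phi^{(r)}_{m|2n}$ from \eqref{EqPhi} followed by the injective inclusion $\End_{GL(m|2n)}(V_{m|2n}^{\otimes r})\hookrightarrow\End_\k(V_{m|2n}^{\otimes r})$. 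So the kernel of \eqref{EqPsi0} contains $\ker\Phi^{(r)}_{m|2n}$, which by the determination of $r_p(m,2n)$ in Section~\ref{SecInj} is nonzero as soon as $r>r_p(m,2n)$.

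The second family comes from reduction modulo $p$. Fixing a standard integral orthosymplectic form on $V_{m|2n,\mZ}=\mZ^{m|2n}$ turns the Brauer action into a homomorphism $\phi_\mZ\colon\cB_r(m-2n)_\mZ\to\End_\mZ(V_{m|2n,\mZ}^{\otimes r})$ of finite‑rank free abelian groups whose base change $\operatorname{id}_\k\otimes\phi_\mZ$ is \eqref{EqPsi0}; here one uses that $\cB_r(\delta)$ is free on Brauer diagrams with structure constants polynomial in $\delta=m-2n$, and that the action is given by signed permutations together with the integral form contractions. By part~(i) over $\mQ$ we have $\mQ\otimes\ker\phi_\mZ=\ker(\operatorname{id}_\mQ\otimes\phi_\mZ)\neq0$ once $r>m+n+mn$, so $\ker\phi_\mZ$ is free of positive rank, whence $\k\otimes\ker\phi_\mZ\neq0$, and by Lemma~\ref{StandLem}(i) this sits inside the kernel of \eqref{EqPsi0}. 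Combining the two families, \eqref{EqPsi0} is not injective whenever $r>\min(m+n+mn,\,r_p(m,2n))$, which is part~(ii).

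For part~(iii) the second mechanism already gives non‑injectivity for $r>m+n+mn$; to reach the sharper bound $m/2+n$ I would compare with the symplectic Brauer algebra. Since $m$ is even, choose the integral orthosymplectic form to be hyperbolic on $\mZ^m$ and standard symplectic on $\mZ^{2n}$, so that modulo $2$ it becomes a non-degenerate \emph{alternating} form on the $(m+2n)$‑dimensional space $\overline V:=\k\otimes\mZ^{m+2n}$. As $m-2n\equiv-(m+2n)\pmod 2$ and all super signs are trivial in characteristic $2$, over $\k$ the reduction $\operatorname{id}_\k\otimes\phi_\mZ$ of \eqref{EqPsi0} is identified with the standard symplectic Brauer action $\cB_r(-2N)_\k\to\End_\k(\overline V^{\otimes r})$ with $2N=m+2n$, i.e.\ with $\operatorname{id}_\k\otimes\psi_\mZ$ for the integral symplectic Brauer action $\psi_\mZ\colon\cB_r(-2N)_\mZ\to\End_\mZ((\mZ^{0|2N})^{\otimes r})$. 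Applying part~(i) with $m$ replaced by $0$ (the bound becomes $0+N+0=N=m/2+n$) gives $\ker(\operatorname{id}_\mQ\otimes\psi_\mZ)\neq0$ for $r>m/2+n$, and the Lemma~\ref{StandLem}(i) argument again forces the kernel of \eqref{EqPsi0} to be nonzero.

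I expect the main obstacle to be precisely that identification in part~(iii): checking that in characteristic $2$, for $m$ even, the Brauer module $V_{m|2n}^{\otimes r}$ is genuinely the $r$‑th tensor power of a symplectic space of dimension $m+2n$ with its standard Brauer action. This is where the parity of $m$ is used — it is what lets the symmetric block of the orthosymplectic form reduce modulo $2$ to a non‑degenerate \emph{alternating} form — and it also requires reconciling the chosen integral form with the form used in \cite{LZ}. The remaining points (that the integrally defined Brauer actions base‑change correctly, and that $\ker\phi_\mZ$ and $\ker\psi_\mZ$ are free of positive rank once they are nonzero over $\mQ$) are routine.
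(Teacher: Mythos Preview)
Your proposal is correct and follows essentially the same approach as the paper. The paper's proof is extremely terse: for part~(ii) it simply says one condition comes from part~(i) together with Lemma~\ref{StandLem}(i) and the other from restriction to $\k S_r\subset\cB_r(m-2n)$; for part~(iii) it says only that for $p=2$ one can replace $\k^{m|2n}$ by $\k^{0|2(m/2+n)}$ and then invoke part~(i) and Lemma~\ref{StandLem}(i) again. You have unpacked these steps --- in particular the reason why the replacement in part~(iii) works (the hyperbolic integral form on $\mZ^m$ reduces to an alternating form modulo~$2$, and super signs vanish) --- which the paper leaves implicit, and your identified obstacle about reconciling the integral form with the one in \cite{LZ} is a genuine technical point that the paper also glosses over.
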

\begin{proof}
One condition in part (ii) comes from part (i) and Lemma~\ref{StandLem}(1), the other from restriction to $\k S_r\subset\cB_r(m-2n)$. Part (iii) follows again from part (i) and Lemma~\ref{StandLem}(1), by observing that for $p=2$ we can replace $\k^{m|2n}$ by $\k^{0|2(m/2+n)}$.
\end{proof}

\subsubsection{}\label{DLZ} If $p\not=2$, then \eqref{EqPsi0} lifts to
\begin{equation}
\label{EqPsi}
\Psi^{(r)}_{m|2n}\;:\;\cB_r(m-2n)\;\to\; \End_{OSp(V)}( V^{\otimes r}).
\end{equation}
For $\k=\mC$, the morphism $\Psi^{(r)}_{m|2n}$ is always surjective, see~\cite{DLZ, LZ, Se}. 

Using the same dimensional arguments that led to Theorem~\ref{ThmSurj}, one can use the above results to prove that $\Psi^{(r)}_{m|2n}$ is not necessarily surjective in positive characteristic. We just give an example.

\begin{example}
If $p>2$, $m\ge 2n>0$ and $m>2p-3$, then there are $r$ for which $\Psi^{(r)}_{m|2n}$ is not surjective.
\end{example}

%\begin{prop}
%Let $\k$ be of characteristic $p>2$.
%If $mn\not=0$ and either 
%$$m>2p-3\quad\mbox{or}\quad n> \frac{p-2}{1-1/m}$$
%then $\Psi$ in~\eqref{EqPsi} is not surjective for some $r$.
%\end{prop}
%\begin{proof}
%Just like in the proof of Theorem~\ref{ThmSurj}, this follows by dimensional arguments from \ref{PropOSp1}(1), (2) and \ref{DLZ}.
%\end{proof}

%%%%%%%%%%%%%%%%%%%%%%%%%%%%%%%%%%%%%%%%%%%%%%%%%%%%%%%%%%%%%%%%%%%%%%%%%%%%%%%%%%%%%%%%%%%%%%%%%%%%%%%%%%%%

\section{Example: $GL(2|1)$}\label{SecEx}
In this section we let $\k$ be of characteristic $p>2$ and we investigate in detail the failure of surjectivity from Theorem~\ref{ThmSurj} in the smallest relevant case.

\subsection{Results}

The following theorem shows in particular that the lack of surjectivity observed in Theorem~\ref{ThmSurj}(i) actually persists to arbitrarily high degrees $r$.

\begin{theorem}\label{Thm21}
Set $V=V_{2|1}$ and $p=3$.
\begin{enumerate}
\item[{\rm (i)}]  If $r= 2+3i$ for some $i\in\mZ_{>0}$, then $\Phi^{(r)}_{2|1}$
is {\bf not} surjective.
\item[{\rm (ii)}]  We have $\dim_\k\End_{GL(2|1)}(V^{\otimes 5})=120$.
\item[{\rm (iii)}]  The image of $\Phi^{(5)}_{2|1}:\k S_5\to \End_{GL(2|1)}(V^{\otimes 5})$  has dimension $119$.
\end{enumerate}

\end{theorem}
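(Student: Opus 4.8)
The plan is to treat the three parts in the order (ii), (iii), (i), since (ii) and (iii) are genuine computations with the Schur superalgebra $S(3|3,5)$, and (i) follows from (iii) together with the general machinery of Section~\ref{SecSurj}. For part (ii), I would compute $\dim_\k\End_{GL(2|1)}(V_{2|1}^{\otimes 5})$ via the identification from \S\ref{SchurGL}: $V_{2|1}^{\otimes 5}$ is the sum $\bigoplus_{(\lambda,\mu)\in\Lambda(2|1,5)}M^{\lambda|\mu}$ of sign-permutation modules, and $\End_{GL(2|1)}(V_{2|1}^{\otimes 5})$ is isomorphic to $\End_{\k S_5}$ of this module restricted to the ``$GL(2|1)$-visible'' part — more precisely, by Donkin's theory it equals the dimension of $\bigoplus_{(\alpha,\beta)\in\Lambda_3^+(2|1,5)}(\dim L(\alpha|3\beta)_{\lambda|\mu})(\dim L(\alpha|3\beta)_{\lambda'|\mu'})$ summed appropriately, i.e. $\sum_{Y}(\text{multiplicity of }Y\text{ in }V_{2|1}^{\otimes 5})^2$ where $Y$ runs over sign-Young modules appearing, using that the sign-Young modules $Y^{\alpha|3\beta}$ occurring are projective $\k S_5$-modules in characteristic $3$. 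Concretely I would enumerate $\Lambda^+_3(2|1,5)$, namely pairs $(\alpha,\beta)$ with $\alpha$ a partition of length $\le 2$, $\beta$ a partition of length $\le 1$, and $|\alpha|+3|\beta|=5$; for each compute the weight multiplicities $\dim L(\alpha|3\beta)_{\lambda|\mu}$ for $(\lambda,\mu)\in\Lambda(2|1,5)$ (these are small explicit $GL(2|1)$-modules), assemble the decomposition of $V_{2|1}^{\otimes 5}$ into indecomposables via Theorem~\ref{TDKlyachko}, and finally sum squares of multiplicities using that the relevant sign-Young modules are pairwise non-isomorphic indecomposable projectives (characteristic $3$, $r=5$). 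The arithmetic should land on $120$; comparison with $\dim_\mC\End = 120$ (since $5 \le m+n+mn = 5$, this agrees with the characteristic-zero value, consistent with $r_3(2,1)=4<5$ but the endomorphism dimension still matching) gives a useful sanity check.

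For part (iii), I would compute $\dim\im\Phi^{(5)}_{2|1}$ using Lemma~\ref{LemDim}(i): the image dimension equals $\sum_i \dim_\k L(i)\dim_\k(P(i)/K(i))$, where $i$ ranges over the simple $\k S_5$-modules in characteristic $3$ and $K(i)$ is the intersection of kernels of maps $P(i)\to V_{2|1}^{\otimes 5}$. Since $V_{2|1}^{\otimes 5}$ is a sum of sign-Young modules, each $P^\lambda$ either occurs as a summand (contributing $\dim L^\lambda \cdot \dim P^\lambda$) or does not, in which case one needs the dimension of the largest quotient of $P^\lambda$ embedding into $V_{2|1}^{\otimes 5}$. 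The simple $\k S_5$-modules for $p=3$ are labelled by $3$-regular partitions of $5$: $(5),(4,1),(3,2),(3,1,1),(2,2,1)$ with known dimensions $1,4,1,6,4$ (the $6$ and one $1$ coming from the non-semisimple block). Most projectives appear in $V_{2|1}^{\otimes 5}$; the discrepancy of exactly $1$ between $120$ and $119$ should come from a single simple module (dimension $1$, the sign or trivial-type module in the principal block) whose projective cover $P$ has $P/K$ of codimension one compared to the full $P$ — i.e. the ``top+bottom'' but not the full uniserial-length-$3$ structure is realised, losing one dimension. I would pin this down by identifying which $Y^{\alpha|0}$ with $3$-restricted $\alpha$ is \emph{missing} from $V_{2|1}^{\otimes 5}$ (this is exactly the obstruction from Proposition~\ref{PMainOld}(ii) with $r=5=r_3(2,1)+1$, $\alpha = \alpha_{(1,2)} = (3,3)$... wait, need $(n,s)$ with the right size; the relevant restricted partition of $5$ is $(2,2,1)'$-related), and then show the best quotient of the corresponding $P^\lambda$ sitting inside $V_{2|1}^{\otimes 5}$ has dimension $\dim P^\lambda - 1$.

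Finally, part (i): once (iii) gives $\dim\im\Phi^{(5)}_{2|1} = 119 < 120 = \dim\End_{GL(2|1)}(V^{\otimes 5})$, non-surjectivity at $r=5$ is immediate. For general $r = 2+3i$, I would argue that non-surjectivity propagates upward in $r$ by a standard tensoring/stability argument: if $\Phi^{(r)}_{m|n}$ is surjective then so is $\Phi^{(r')}_{m|n}$ for... actually the implication typically goes the wrong way, so instead I expect the argument uses that $V^{\otimes r}$ for $r=2+3i$ contains $V^{\otimes 5}$ as a ``sub-situation'' via an idempotent, or more likely one produces an explicit $GL(2|1)$-endomorphism of $V^{\otimes r}$ not in the image of $\k S_r$ by an inflation of the $r=5$ counterexample, using that $V^{\otimes 3}$ contains a copy of the trivial module or a projective on which one can graft the $r=5$ obstruction; alternatively one reruns the dimension count $\delta^r_{2|1}(\k) > \delta^r_{2|1}(\mathbb{Q}) \ge \dim\im\Phi^{(r)}$ which needs $r \le m+n+mn$ and fails for large $r$, so for $r>5$ a different mechanism is needed — presumably a direct construction. \textbf{The main obstacle} is this last point: showing non-surjectivity persists to \emph{all} $r=2+3i$ rather than just $r=5$, since the clean dimension-comparison argument of Proposition~\ref{PropBound} only applies in the range $r \le m+n+mn = 5$; the proof must instead exhibit, for each such $r$, a concrete invariant endomorphism outside $\im\Phi^{(r)}_{2|1}$, most plausibly by embedding the $r=5$ obstruction into $V_{2|1}^{\otimes r}$ through a suitable $\k S_5 \times \k S_{r-5}$-equivariant splitting and checking it cannot be hit by $\k S_r$ for weight/support reasons.
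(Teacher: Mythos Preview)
Your plan for part (iii) is essentially the paper's argument: use Lemma~\ref{LemDim}(i), identify which indecomposable projectives appear as summands of $V^{\otimes 5}$, and compute $P/K$ for the missing one. The missing projective is $P^{(3,2)}$ (with Loewy layers $D^{(3,2)}|D^{(4,1)}|D^{(3,2)}$, each one-dimensional or four-dimensional respectively); since $P^{(4,1)}$ \emph{does} occur, the kernel $K^{(3,2)}$ is just the one-dimensional socle, yielding the defect of exactly $1$. So this part is fine.

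For part (ii), however, your proposed approach does not compute the right algebra. Summing squares of sign--Young multiplicities would, at best, measure $\End_{\k S_5}(V^{\otimes 5})$ (the Schur superalgebra side), not $\End_{GL(2|1)}(V^{\otimes 5})$; and even for that, the sum-of-squares formula requires the sign--Young modules involved to have no cross-$\Hom$'s, which fails since not all of $Y^{(5)|0},Y^{(4,1)|0},Y^{(2)|(3)},Y^{(1,1)|(3)}$ are projective (for instance $(5)$ and $(4,1)$ are not $3$-restricted). The paper instead works directly on the $GL(2|1)$-side: it splits $V^{\otimes 5}$ into the five pieces $Q(1),\dots,Q(5)$ via $S^2(\Lambda^2V)\otimes V$, $\Lambda^2(\Lambda^2V)\otimes V$, etc., determines their socle filtrations using the Casimir, $\mathfrak{g}_{-1}$-freeness, self-duality, and explicit character/lowest-weight counts, and then bounds the endomorphism algebra from above by $120$, which matches the characteristic-zero lower bound from Proposition~\ref{PropBound}.

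The real gap is part (i). You correctly flag that propagating the $r=5$ counterexample to all $r=2+3i$ via tensoring or embedding is not straightforward, and indeed the paper does \emph{not} attempt this. Its argument is uniform and independent of (ii) and (iii). For $r$ with $3\nmid r$ and $3\nmid (r-1)$, one checks that $\wedge^r V\simeq L(1,1|r-2)$ is simple; its lowest weight $(0,0|r)$ has multiplicity one in $V^{\otimes r}$, so $\wedge^r V$ is a direct $GL(2|1)$-summand. If $\Phi^{(r)}_{2|1}$ were surjective, the projector onto this summand would have to be $\Phi(e_s)$ for $e_s$ the sign idempotent, forcing every $\k S_r$-map $P^{\mathrm{sign}}\to V^{\otimes r}$ to kill the radical of $P^{\mathrm{sign}}$, hence $\dim\Phi(\k S_r e_s)=1$. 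But for $p=3$ the $3$-regularisation $\lambda$ of $(1^r)$ has two rows, so the Specht module $S^\lambda$ occurs in $V^{\otimes r}$ and surjects onto the sign module, giving $\dim\Phi(\k S_r e_s)\ge\dim S^\lambda>1$; contradiction. This is the missing idea.
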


The rest of the section is devoted to the proof. For the remainder of the section, we fix $m=2,n=1$, so $V=V_{2|1}.$

\subsection{Direct argument for failure of surjectivity}

Unless further specified, we assume $p>2$.
We denote by $e_s\in \k S_r$ the (unique) idempotent for which $(\k S_r)e_s$ is the projective cover of the sign module.

\begin{lemma}\label{Lem21a}
Assume neither $r$ nor $r-1$ are divisible by $p$.
\begin{enumerate}
\item[{\rm (i)}]  Over $GL(V)$, the inclusion 
$$\wedge^rV\;\hookrightarrow\; V^{\otimes r}$$
of the $S_r$-invariants for the sign-twisted action, is split.
\item[{\rm (ii)}]  If $\Phi^{(r)}_{2|1}$ is surjective, then $\dim_{\k}\Phi(\k S_re_s)=1$.
\end{enumerate} 
\end{lemma}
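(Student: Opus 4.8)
The plan is to prove (i) by producing an explicit retraction, and then to deduce (ii) from (i) together with the surjectivity hypothesis and a small input from the highest‑weight theory of $GL(V)=GL(2|1)$. For part (i), write $W:=\wedge^rV=(V^{\otimes r})^{S_r,\operatorname{sign}}$ for the sign‑twisted invariants, and let $\overline W:=(V^{\otimes r})_{S_r,\operatorname{sign}}$ be the corresponding $GL(V)$‑module of coinvariants, so that there is a $GL(V)$‑equivariant composite
$$\theta\colon\ W\ \hookrightarrow\ V^{\otimes r}\ \twoheadrightarrow\ \overline W.$$
For $r\ge 2$ both $W$ and $\overline W$ are $4$‑dimensional, with the same four $T$‑weights $(0,0|r),(1,0|r-1),(0,1|r-1),(1,1|r-2)$, each of multiplicity one; being $T$‑equivariant, $\theta$ is diagonal with respect to these weight decompositions. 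Working over $\mZ$ with the obvious integral bases (alternating sums of standard tensor monomials on the source, images of standard monomials on the target) one checks that $\theta_{\mZ}$ is diagonal with entries $1,r,r$ and $\pm r(r-1)$, so $\det\theta_{\mZ}=\pm r^{3}(r-1)$; invertibility of $\theta_{\mQ}$ is in any case clear a priori, since over $\mQ$ the invariants and coinvariants coincide. Hence if $p\nmid r$ and $p\nmid r-1$, then $\theta=\theta_{\mZ}\otimes\k$ is an isomorphism, and $\theta^{-1}$ followed by the projection $V^{\otimes r}\twoheadrightarrow\overline W$ is the required $GL(V)$‑equivariant retraction of $W\hookrightarrow V^{\otimes r}$. (The bookkeeping of Koszul signs in the computation of the entries of $\theta_{\mZ}$ is the only slightly delicate point, and this is where both hypotheses $p\nmid r$, $p\nmid r-1$ enter.)

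For part (ii), assume $\Phi=\Phi^{(r)}_{2|1}$ is surjective, so $\End_{GL(V)}(V^{\otimes r})=\Phi(\k S_r)$; set $\pi:=\Phi(e_s)$, an idempotent of $\End_{GL(V)}(V^{\otimes r})$, and $U:=\pi\,V^{\otimes r}$, a $GL(V)$‑direct summand of $V^{\otimes r}$. Since $\Phi$ is a surjective algebra map and $e_s$ is primitive,
$$\End_{GL(V)}(U)\ \cong\ \pi\,\End_{GL(V)}(V^{\otimes r})\,\pi\ =\ \Phi(e_s\,\k S_r\,e_s)$$
is a quotient of the local ring $e_s\,\k S_r\,e_s$, hence local, so $U$ is indecomposable. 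On $W=\wedge^rV$ every $\sigma\in S_r$ acts by the scalar $\operatorname{sign}(\sigma)$, so $W$ is a direct sum of copies of the sign module, on which $e_s$ acts as the identity (the sign module being the simple top of $(\k S_r)e_s$, as $e_s$ is primitive); therefore $W\subseteq\pi V^{\otimes r}=U$. By part (i), $W$ is a $GL(V)$‑summand of $V^{\otimes r}$, hence of $U$ by the modular law, and since $U$ is indecomposable and $W\ne 0$ we conclude $U=W$. Consequently $\Phi(\k S_re_s)=\End_{GL(V)}(V^{\otimes r})\,\pi$, and restriction to $U=W$ identifies this $\k$‑linearly with $\Hom_{GL(V)}(W,V^{\otimes r})$; so it remains to prove $\dim_\k\Hom_{GL(V)}(W,V^{\otimes r})=1$.

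For the final step, note that $W=\wedge^rV$ has highest weight $\nu=(1,1|r-2)$, and that $\langle\nu+\rho,\alpha\rangle$ equals $r$ or $r-1$ on the two odd roots $\alpha$ of $GL(2|1)$; under the hypothesis these are nonzero modulo $p$, so $\nu$ is typical and the Kac (Weyl) module is irreducible: $\Delta(\nu)=\nabla(\nu)=L(\nu)=W$. Since $V^{\otimes r}$ has a good filtration (it is standard, cf.\ \cite{Donkin}, that tensor powers of the natural module are good‑filtered, indeed tilting) and $\Hom_{GL(V)}(\Delta(\nu),\nabla(\mu))=\delta_{\nu\mu}\k$, we obtain
$$\dim_\k\Hom_{GL(V)}(W,V^{\otimes r})=\dim_\k\Hom_{GL(V)}(\Delta(\nu),V^{\otimes r})=(V^{\otimes r}:\nabla(\nu)).$$
This good‑filtration multiplicity is defined over $\mZ$ (so the relevant $\Ext^1$ vanishes and it is independent of the characteristic), and over $\mC$ it equals the multiplicity of $L(\nu)_{\mC}=\wedge^rV_{\mC}$ in $V^{\otimes r}_{\mC}$, which by classical super Schur--Weyl duality is $\dim S^{(1^r)}_{\mC}=1$. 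Hence $\dim_\k\Phi(\k S_re_s)=1$.

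The routine part of all this is the sign bookkeeping for the entries of $\theta_{\mZ}$; the genuinely structural step, and the main obstacle, is the last paragraph, where one must invoke the highest‑weight/tilting theory of $GL(2|1)$ and use the typicality of $(1,1|r-2)$ — precisely the place where the two numerical hypotheses re‑enter — to identify $\wedge^rV$ with the simple Kac module, after which the base‑change argument delivers the multiplicity $1$. The idempotent manipulations in the first half of (ii) are purely formal once (i) is in hand.
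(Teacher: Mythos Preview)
Your proof is correct, but both halves take a different route from the paper, and in each case the paper's argument is more economical.

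For (i), the paper does not compute the norm map $\theta$. Instead it observes that under the hypothesis $p\nmid r(r-1)$ the weight $(1,1\mid r-2)$ is typical, so $L:=L(1,1\mid r-2)$ is $4$-dimensional and hence $\wedge^rV=L$ is simple. The \emph{lowest} weight $(0,0\mid r)$ of $L$ occurs with multiplicity one in $V^{\otimes r}$ (the weight space is spanned by $v_1'^{\otimes r}$), so $[V^{\otimes r}:L]=1$, and self-duality of $V^{\otimes r}$ forces $L$ to split off. Your explicit diagonalisation of $\theta_{\mZ}$ with entries $1,r,r,\pm r(r-1)$ is a perfectly valid alternative; it has the mild advantage of not invoking any representation theory of $GL(2|1)$, at the cost of the sign bookkeeping you flag.

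For (ii), you and the paper agree on the reduction $U:=\Phi(e_s)V^{\otimes r}=W$ (primitive idempotent $\Rightarrow$ local endomorphism ring $\Rightarrow$ $U$ indecomposable, then use (i)). After that the paper avoids your tilting/good-filtration argument entirely: once $\Phi(e_s)V^{\otimes r}=W$, every $\k S_r$-homomorphism $P^{\mathrm{sign}}\to V^{\otimes r}$ has image contained in $W$, which as a $\k S_r$-module is a direct sum of sign modules, so every such map kills $\operatorname{rad}P^{\mathrm{sign}}$; by the formula of Lemma~\ref{LemDim}(i) this gives $\dim\Phi(\k S_r e_s)=\dim D^{\mathrm{sign}}=1$. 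Your computation $\Hom_{GL(V)}(W,V^{\otimes r})=1$ via the identity $\Hom(\Delta(\nu),-)=(\,-:\nabla(\nu))$ on good-filtered modules is correct, but note that the same conclusion drops out of the paper's multiplicity-one observation in (i): $W=L$ simple and $[V^{\otimes r}:L]=1$ already force $\dim\Hom_{GL(V)}(L,V^{\otimes r})\le 1$. So the tilting machinery, while sound, is not needed here.
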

\begin{proof}
Claim (i) is only non-trivial for $r\ge p$, which is what we focus on.
If neither $r$ nor $r-1$ is divisible by $p$ it is easy to check that the (4-dimensional) module $L:=\Lambda^r V$ is simple. More precisely it is the simple module $L(1,1|r-2)$, with lowest weight $0,0|r$. 

However the weight space $0,0|r$ in $ V^{\otimes r}$ is only 1-dimensional, meaning that $[V^{\otimes r}:L]=1$. By self-duality (or because the quotient $\Lambda^rV$ of $V^{\otimes r}$ is isomorphic to $ L$ too), $L$ must form a direct summmand.

For part (ii), denote the sign module of $\k S_r$ by $D$ and its projective cover by $P=\k S_r e_s$. We observe
$$\wedge^r V\,\simeq\,\Hom_{kS_r}(D, V^{\otimes r})\,\subset\, \Hom_{kS_r}(P, V^{\otimes r})\,\simeq\, \Phi(e_s)(V^{\otimes r}).$$
Hence, under the assumption that $\Phi$ is surjective, the idempotent in $\End_{GL(V)}(V^{\otimes r})$ corresponding to the summand $\wedge^r V$ must be $\Phi(e_s)$ and the above inclusion an equality. Hence every $\k S_r$-module morphism $P\to V^{\otimes r}$ acts trivially on the radical of $P$, from which (ii) follows.
\end{proof}

\begin{lemma}\label{Lem21b}

If $p=3$ and $r\ge 3$, then
 $\dim_{\k}\Phi(\k S_re_s)>1$.
\end{lemma}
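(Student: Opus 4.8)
The plan is to reduce the claim, via Lemma~\ref{LemDim}(i), to the inequality $[V^{\otimes r}_{2|1}:D]>4$ for the sign module $D$, and then to evaluate this composition multiplicity in characteristic zero. Put $P:=\k S_re_s$, the projective cover of the (one-dimensional) sign module $D=D^{\tR((1^r))}$, so $P$ is local with radical $\Rad P$ and $P/\Rad P\cong D$. As in the proof of Lemma~\ref{LemDim}(i), $\ker(\Phi|_P)=\bigcap_f\ker f$ over $f\in\Hom_{\k S_r}(P,V^{\otimes r})$, so $\dim_\k\Phi(\k S_re_s)=\dim_\k(P/\ker(\Phi|_P))$. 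Since $\Hom_{\k S_r}(D,V^{\otimes r})\cong\wedge^rV\ne 0$ (the sign-isotypic subspace, cf.\ the proof of Lemma~\ref{Lem21a}), there is an $f_0\colon P\twoheadrightarrow D\hookrightarrow V^{\otimes r}$ with kernel $\Rad P$; hence $\ker(\Phi|_P)\subseteq\Rad P$, with equality — equivalently $\dim_\k\Phi(\k S_re_s)=1$ — precisely when every $f$ kills $\Rad P$, i.e.\ when $\Hom_{\k S_r}(P,V^{\otimes r})=\Hom_{\k S_r}(D,V^{\otimes r})$. As $P$ is projective and $\k$ is a splitting field, the left-hand side has dimension $[V^{\otimes r}:D]$, whereas $\dim_\k\Hom_{\k S_r}(D,V^{\otimes r})=\dim_\k\wedge^rV=4$ for $r\ge2$ (a dimension count: $\wedge^r(\k^{2|1})=\bigoplus_{i+j=r}\wedge^i\k^2\otimes\Sym^j\k^1$). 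So the lemma is equivalent to $[V^{\otimes r}_{2|1}:D]>4$.

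To prove the inequality, pass to characteristic zero. Since $V^{\otimes r}_{2|1}$ is the reduction modulo $p$ of $V^{\otimes r}_{2|1,\mC}$ along $V^{\otimes r}_{2|1,\mZ}$, the Specht multiplicities agree with the complex ones, which by Schur--Weyl duality for $GL(2|1)$ (see \cite{Berele}) equal $\dim_\mC L_{2|1,\mC}(\nu)$ for $\nu$ in the $(2,1)$-hook ($\nu_3\le 1$) and $0$ otherwise; hence $[V^{\otimes r}:D]=\sum_\nu\dim_\mC L_{2|1,\mC}(\nu)\,[S^\nu:D^{\tR((1^r))}]$. I would keep just two summands. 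For $\nu=(1^r)$: here $S^{(1^r)}=D$, so $[S^{(1^r)}:D^{\tR((1^r))}]=1$, and $\dim_\mC L_{2|1,\mC}((1^r))=\dim_\mC\wedge^r(\mC^{2|1})=4$ (for $r\ge2$); this contributes $4$. For $\nu=\mu_0:=\tR((1^r))$: this is $p$-regular, so $[S^{\mu_0}:D^{\mu_0}]=1$, and for $p=3$ the regularization of the single column $(1^r)$ has at most two rows — the box of $(1^r)$ on the $j$-th ladder is pushed to row $1$ or row $2$ according as $j$ is odd or even, so in fact $\mu_0=(\lceil r/2\rceil,\lfloor r/2\rfloor)$ — whence $\mu_0$ lies in the $(2,1)$-hook and $\dim_\mC L_{2|1,\mC}(\mu_0)\ge1$; this contributes at least $1$. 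Since $r\ge3$, the column $(1^r)$ is not $3$-regular, so $\mu_0\ne(1^r)$ and the two summands are distinct; therefore $[V^{\otimes r}:D]\ge4+1=5>4$, and the lemma follows.

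The only step requiring genuine verification is that $\tR((1^r))$ has at most two rows when $p=3$ — the short ladder computation above — everything else being formal once Lemma~\ref{LemDim}(i) and the characteristic-zero decomposition of $V^{\otimes r}_\mC$ are in hand. As an alternative, more hands-on route one could instead exhibit a direct summand of $V^{\otimes r}$ isomorphic to the projective cover $P(D^\nu)$ of a simple $D^\nu\ne D$ that has $D$ as a composition factor; since $\Soc P(D^\nu)=D^\nu$, a copy of $D$ then sits strictly above the socle, again forcing $\Hom_{\k S_r}(P,V^{\otimes r})\supsetneq\Hom_{\k S_r}(D,V^{\otimes r})$. But the multiplicity count seems cleaner.
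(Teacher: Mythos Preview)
Your proof is correct, and it shares with the paper the crucial observation that for $p=3$ the regularisation $\mu_0=\tR((1^r))=(\lceil r/2\rceil,\lfloor r/2\rfloor)$ has at most two rows. The paper, however, uses this more directly: since $\mu_0\in\Lambda^+(2)$, the Specht module $S^{\mu_0}$ is a \emph{submodule} of the summand $M^{\mu_0|0}\subset V^{\otimes r}$; as $D=D^{\mu_0}$ is the head of $S^{\mu_0}$, the projective cover surjection $P\twoheadrightarrow S^{\mu_0}$ composed with this inclusion already gives one morphism $P\to V^{\otimes r}$ of image $S^{\mu_0}$, so $\dim_\k\Phi(\k S_re_s)\ge\dim_\k S^{\mu_0}>1$. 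Your route instead converts the statement into the numerical criterion $[V^{\otimes r}:D]>\dim_\k\wedge^r V=4$ and then feeds in the Berele--Regev multiplicities together with the decomposition number $[S^{\mu_0}:D^{\mu_0}]=1$ to get $\ge 5$. This is perfectly valid but heavier: it needs the explicit value $\dim_\k\wedge^rV=4$, the reduction-mod-$p$ identity $[V^{\otimes r}:D]=\sum_\nu m_\nu[S^\nu:D]$, and the hook-shape of $\mu_0$, whereas the paper gets by with just the last of these. On the other hand, your reformulation $\dim_\k\Phi(\k S_re_s)=1\Leftrightarrow [V^{\otimes r}:D]=\dim_\k\wedge^rV$ is a clean equivalence that could be useful elsewhere, and your alternative suggestion (find a projective summand $P(D^\nu)$ with $\nu\neq\mu_0$ and $[P(D^\nu):D]>0$) is in spirit closer to the paper's argument.
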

\begin{proof}
The $p$-regularisation $\lambda$ of the partition $(1^r)$ of $r$ is $(s,s)$ if $r=2s$, and $(s+1,s)$ if $r=2s+1$. 

Since $\lambda$ is of length $2=m$, the Specht module $S^{\lambda}$ appears as a subquotient of $V^{\otimes r}$, for instance in $M^{(r/2,r/2)|0}$ if $r$ is even. Since the sign module $D^\lambda$ is the top of $S^\lambda$, it follows that the image in the statement of the lemma has at least dimension $\dim_\k S^\lambda>1$.
\end{proof}

\begin{proof}[Proof of Theorem~\ref{Thm21}(ii)]
This is a consequence of Lemmata~\ref{Lem21a}(ii) and~\ref{Lem21b}.
\end{proof}

\subsection{Proof of Theorem~\ref{Thm21}(ii)}
We set $p=3$, $m=2$, $n=1$ and $V=V_{2|1}$.
By Proposition~\ref{PropBound}, to calculate the dimension of the endomorphism algebra we can assume that $\k$ is algebraically closed.

\subsubsection{} 

We have a decomposition
$$V^{\otimes 5} \;\simeq\; S^2(\Lambda^2 V)\otimes V\,\oplus\, \Lambda^2(\Lambda^2 V)\otimes V\,\oplus\, \Lambda^2(S^2 V)\otimes V\,\oplus\, S^2(S^2 V)\otimes V \,\oplus\, ((\Lambda^2V)\otimes (S^2 V)\otimes V)^2.$$
We label the summands by $Q(i)$ for $1\le i\le 5$, where $Q(5)$ thus appears twice in $V^{\otimes 5}$.

Some general useful (easy to verify) facts are:
\begin{enumerate}[label=(\alph*)]
\item The $\mathfrak{gl}(2|1)$-module $\Lambda^2 V$ is free as a $\mathfrak{g}_{-1}$-module, where $\mathfrak{g}_{-1}$ is the subalgebra spanned by the $E_{i'j}$. Consequently, also $Q(1), Q(2)$ and $Q(5)$ are $\mathfrak{g}_{-1}$-free.
\item The module $Q(i)$ is self-dual, for every $1\le i\le 5$.
\item The Casimir operator in $U(\mathfrak{g})$ is central as an element of the hyperalgebra $U_{\k}$. Moreover on a simple $GL(2|1)$-representation $L(a,b|c)$ it acts via the scalar
$$a^2+(b-1)^2-(c+1)^2\in\k.$$
\item If $a-b\le p-2$ (so $a-b\le 1$ for us) and if $(a+c+1)(b+c)$ is not zero in $\k$, then the character of $L(a,b|c)$ equals that of the (simple) Kac module with highest weight $(a,b|c)$ over $\mathbb{C}$. The relevant examples of this are:
$L(3,2|0)$ and $L(1,1|3)$.
\item As $p=3$, the $GL(2|1)$-representation $L(2,2|1)$ is one-dimensional.
\end{enumerate}

We will also freely use the multiplicities of the simple $GL(2|1)$-representations in the (semisimple) analogues of the $Q(i)$ over $\mC$.

\subsubsection{} A quick computation shows that the subquotient $(\wedge^4 V\otimes V)/\wedge^5V$ of $Q(1)$ is an extension of $L(2,1|2)$ and $L(2,0|3)$, revealing in particular the character of both simples. Using this, (e) and more character comparison shows that the simple multiplicities in $Q(1)$ correspond to the following
$$Q(1)\;\simeq\; L(3,2|0)\;\oplus\; L(1,1|3)\;\oplus \;\begin{array}{c}L(2,1|2)\\ L(2,2|1)\oplus L(2,0|3)\\ L(2,1|2)\end{array}.$$
That the above isomorphism then holds follows from (a), (b), (c) and (d). The procedure determines also the character of all simples involved.
\subsubsection{} Using similar arguments we can subsequently prove
\begin{eqnarray*}
Q(2)&=& L(3,1|1)\;\oplus\; \begin{array}{c}L(2,1|2)\\ L(2,2|1)\oplus L(2,0|3)\\ L(2,1|2)\end{array},\\
Q(3)&=& L(3,1|1)\;\oplus\; \begin{array}{c}L(3,2|0)\\ L(4,1|0)\\ L(3,2|0)\end{array},\\
Q(5)&=&  L(3,1|1)^2\;\oplus\; \begin{array}{c}L(3,2|0)\\ L(4,1|0)\\ L(3,2|0)\end{array}\;\oplus\; \begin{array}{c}L(2,1|2)\\ L(2,2|1)\oplus L(2,0|3)\\ L(2,1|2)\end{array}.
\end{eqnarray*}

\subsubsection{} Since to composition $\Gamma^5V\hookrightarrow V^{\otimes 5}\tto S^5V$ from the $S_5$-invariants to the coinvariants is not zero, it follows quickly that $S^5 V$ is a (non-split) extension of $L(5,0|0)$ and $L(2,2|1)$. A tedious calculation shows that $L(2,1|2)$ is not a submodule of $Q(4)$. It then follows from (b) and (c) that
$$Q(4)\;\simeq\;\begin{array}{c}L(2,2|1)\\ L(5,0|0)\oplus L(2,1|2)\\ L(2,2|1)\end{array} \;\oplus\; \begin{array}{c}L(3,2|0)\\ L(4,1|0)\\ L(3,2|0)\end{array}.$$

\subsubsection{}Regardless of the precise structure of the summands of the  $Q(i)$ with the given socle filtrations, the maximal dimension of the endomorphism algebra can be calculated to be 120. This must be an equality by Proposition~\ref{PropBound}.

\subsection{Proof of Theorem~\ref{Thm21}(iii)}

We set $p=3$, $m=2$, $n=1$ and $V=V_{2|1}$.

\subsubsection{}\label{p3S}
The simple $\k S_5$-modules are
$$D^{(5)}, D^{(2,2,1)}\quad |\quad D^{(3,2)}, D^{(4,1)}\quad |\quad D^{(3,1,1)}$$
where we have split the list according to the blocks. The first two blocks are equivalent, via tensoring with the sign module $D^{(3,2)}$. The block decomposition in turn implies that
$$S^{(4,1)}\simeq D^{(4,1)}\quad\mbox{and}\quad S^{(3,1,1)}\simeq D^{(3,1,1)}.$$

\subsubsection{}
By \eqref{ETensWts} and \eqref{EVMIso}, the $\k S_5$-module $V^{\otimes 5}$ contains (projective) summands $M^{(2,2)|1}\simeq M^{(2,2,1)}$ and $M^{(2,1)|2}$.
We claim that 
$$
M^{(2,2,1)}\quad\mbox{contains}\quad P^{(5)}\oplus P^{(4,1)}\oplus P^{(3,1,1)}.
$$
Indeed, the the first summand is obvious. The second follows from $S^{(4,1)}\simeq D^{(4,1)}$ and the Littlewood-Richardson rule. The third summand follows similarly.

The same argument leads to the first two summands in the claim that
$$
M^{(2,1)|2}\quad\mbox{contains}\quad P^{(4,1)}\oplus P^{(3,1,1)}\oplus P^{(2,2,1)},
$$
while the appearance of $P^{(2,2,1)}$ follows from observing that there is a non-zero morphism $M^{(2,1)|2}\to S^{(2,2,1)}$ but no such morphism $M^{(2,1)|2}\to S^{(5)}$.

From Lemma~\ref{LUpper} , we know that the $\k S_5$-module $V^{\otimes 5}$ is not faithful. By Lemma~\ref{LemDim}(ii), the one remaining indecomposable projective $P^{(3,2)}$ therefore cannot appear as a summand.

\subsubsection{} Using the dimensions of the simple modules and the block equivalence from~\ref{p3S}, expressing the dimension of $\k S_5$ yields
$$120\;=\; 2\dim P^{(3,2)}+8\dim P^{(2,2,1)}+36.$$
Since $\k S_5$ is not semisimple it also follows easily that $\dim P^{(3,2)}\ge 6$ and  $\dim P^{(2,2,1)}\ge 9$. Hence these bounds must be equalities and the projectives have (socle) filtrations
$$P^{(4,1)}\;:\; D^{(4,1)}| D^{(3,2)}|D^{(4,1)},\qquad P^{(3,2)}\;:\; D^{(3,2)}| D^{(4,1)}| D^{(3,2)}.$$

Since $P^{(4,1)}$ appears in $ V^{\otimes 5}$, it follows that the submodule $K$ of $P^{(3,2)}$ from Lemma~\ref{LemDim}(i) is the socle $D^{(3,2)}$, which is one-dimensional. The dimension now follows from Lemma~\ref{LemDim}(i).

\newpage

\section{Some further considerations}\label{SecFurther}
In this section we let $p=\mathrm{char}(\k)$ be prime.

\subsection{Towards an adapted first fundamental theorem}

\subsubsection{Construction}\label{Constr}
Consider a homomorphism of free abelian groups of finite rank
$$f:A\to B.$$
 We have an induced morphism of $\mZ[1/p]$-modules
$$\widetilde{f}:\;\mZ[1/p]\otimes A\;\to\;\mZ[1/p]\otimes B$$
and define the subgroup
$$I_p(f)\;:=\; \im (\widetilde{f})\cap B\,\subset\, B.$$

\begin{example}
For $n\in\mZ$ and $f:\mZ\to \mZ$ given by multiplication by $n$ we have $$I_p(f)=\frac{n}{p^i}\mZ\subset \mZ,$$ where $i\in\mN$ is maximal with $p^i|n$.
\end{example}

\begin{lemma}\label{LemApf}
We have:
\begin{enumerate}
\item[{\rm (i)}] $\dim_{\k}\k\otimes I_p(f)\;=\;\mathrm{rank} A -\mathrm{rank}\ker f\;=\; \dim_{\mC}\mC\otimes A-\dim_{\mC}\ker(\mC\otimes f).$
\item[{\rm (ii)}]  The induced morphism
$$\k\otimes I_p(f)\;\to \k\otimes B$$
is injective.
\item[{\rm (iii)}]  If $f$ is a ring homomorphism, then $I_p(f)\subset B$ is a subring.
\end{enumerate}
\end{lemma}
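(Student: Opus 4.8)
The plan is to derive all three parts from one elementary observation: the subgroup $I_p(f)$ is \emph{$p$-saturated} in $B$, i.e.\ $B/I_p(f)$ has no $p$-torsion. To see this, note that $\widetilde f$ is the base change of $f$ along $\mZ\hookrightarrow\mZ[1/p]$, so $\im(\widetilde f)$ is a $\mZ[1/p]$-submodule of $\mZ[1/p]\otimes B$; in particular it is stable under multiplication by $1/p$. Hence if $b\in B$ satisfies $pb\in I_p(f)\subseteq\im(\widetilde f)$, then $b=\tfrac1p(pb)\in\im(\widetilde f)$, and since $b\in B$ we get $b\in\im(\widetilde f)\cap B=I_p(f)$. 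Thus multiplication by $p$ is injective on the finitely generated abelian group $B/I_p(f)$; equivalently $I_p(f)\cap pB=p\,I_p(f)$.

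Given this, part (ii) follows by applying $\k\otimes_\mZ(-)$ to the short exact sequence $0\to I_p(f)\to B\to B/I_p(f)\to 0$: the obstruction $\mathrm{Tor}_1^\mZ(\k,B/I_p(f))$ is, after base change from $\mZ/p$, the $p$-torsion of $B/I_p(f)$, which we have just shown to vanish, so $0\to\k\otimes I_p(f)\to\k\otimes B\to\k\otimes(B/I_p(f))\to 0$ remains exact. For part (i), since $I_p(f)$ is a subgroup of the free group $B$ it is itself free, so $\dim_\k\k\otimes I_p(f)=\mathrm{rank}\,I_p(f)$; and flatness of $\mC$ over $\mZ$ gives $\dim_\mC\mC\otimes A-\dim_\mC\ker(\mC\otimes f)=\mathrm{rank}\,A-\mathrm{rank}\,\ker f$. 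It therefore remains to show $\mathrm{rank}\,I_p(f)=\mathrm{rank}\,A-\mathrm{rank}\,\ker f$. Clearing denominators, every element of $\im(\widetilde f)$ lies in $p^{-N}I_p(f)$ for some $N$, so $I_p(f)$ and $\im(\widetilde f)$ span the same $\mQ$-subspace of $\mQ\otimes B$; hence $\mathrm{rank}\,I_p(f)=\mathrm{rank}_{\mZ[1/p]}\im(\widetilde f)$, and this equals $\mathrm{rank}\,A-\mathrm{rank}\,\ker f$ by rank-nullity for $\widetilde f$ over the PID $\mZ[1/p]$, using that $\mZ[1/p]$ is flat over $\mZ$ to identify $\ker\widetilde f=\mZ[1/p]\otimes\ker f$.

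Part (iii) is then formal: if $f$ is a ring homomorphism, so is its base change $\widetilde f$, whence $\im(\widetilde f)$ is a subring of $\mZ[1/p]\otimes B$; and $B$ embeds as a subring of $\mZ[1/p]\otimes B$ because $B$ is torsion-free. Thus $I_p(f)=\im(\widetilde f)\cap B$ is an intersection of two subrings, hence a subring of $B$. The only real content is the $p$-saturation claim in the first paragraph; once that is in place, (i)--(iii) reduce to rank counting, one short exact sequence, and a formal remark respectively, so I do not anticipate a genuine obstacle.
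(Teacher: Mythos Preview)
Your proof is correct and follows essentially the same approach as the paper. The paper's argument for (ii) is the terse observation ``by construction $I_p(f)/p\to B/p$ is injective,'' which is exactly your $p$-saturation claim; for (i) the paper instead notes that $I_p(f)/\im f$ is $p$-torsion (so $\mathrm{rank}\,I_p(f)=\mathrm{rank}\,\im f$), which is a minor variant of your rank comparison via $\mZ[1/p]$; and (iii) is declared immediate in both.
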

\begin{proof}
The second equality in part (i) is just Lemma~\ref{StandLem}(i). The first equality follows from the observation that the quotient of the free abelian group $I_p(f)$ by its subgroup $\im f$ is $p$-torsion.

For part (ii) we can observe that by construction $I_p(f)/p\to B/p$ is injective.

Part (iii) is immediate.
\end{proof}

\begin{definition}\label{DSigma}
Define $\Sigma_{m|n}^{(r)}$ as the $\k$-algebra $\k\otimes I_p(f)$ for 
$f:\mZ S_r\;\to\; \End(V^{\otimes r}_{m|n,\mZ}).$

\end{definition}

\begin{prop}\label{PropSigma}
Let $p>2$. Then we have:
\begin{enumerate}
\item[{\rm (i)}]  
$\dim_{\k}\Sigma_{m|n}^{(r)}\;=\; \dim_{\mC}\End_{GL(V_{m|n,\mC})}(V_{m|n,\mC}^{\otimes r})$.
\item[{\rm (ii)}]  There is a commutative diagram of algebra morphisms
$$\xymatrix{
\Sigma_{m|n}^{(r)}\ar@{^{(}->}[rr]^-{\widetilde{\Phi}^{(r)}_{m|n}}&& \End_{GL(m|n)}(V_{m|n}^{\otimes r})\\
\k S_r\ar[u]\ar[urr]_-{\Phi^{(r)}_{m|n}}}$$
where $\widetilde{\Phi}^{(r)}_{m|n}$ is injective.
\item[{\rm (iii)}]  The morphism $\widetilde{\Phi}^{(r)}_{m|n}$ is an isomorphism if and only if 
$$\dim_{\k}\End_{GL(m|n)}(V_{m|n}^{\otimes r})=\dim_{\mC}\End_{GL(m|n)}(V_{m|n,\mC}^{\otimes r}).$$
\end{enumerate}
\end{prop}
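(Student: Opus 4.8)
The plan is to deduce all three parts from the general Construction~\ref{Constr} and Lemma~\ref{LemApf} applied to $f:\mZ S_r\to \End(V^{\otimes r}_{m|n,\mZ})$, together with the known characteristic-zero invariant theory recorded in Remark~\ref{RP}(i). First I would observe that since $p>2$, the map $f$ is a ring homomorphism whose image, after inverting~$p$, lands in $\mZ[1/p]\otimes \End_{GL(m|n,\mZ[1/p])}(V^{\otimes r})$; in particular $I_p(f)$ is a subring of $\End(V^{\otimes r}_{m|n,\mZ})$ by Lemma~\ref{LemApf}(iii), so $\Sigma_{m|n}^{(r)}=\k\otimes I_p(f)$ is a $\k$-algebra and the two maps in the triangle of part~(ii) are the obvious algebra morphisms $\k S_r\to \Sigma_{m|n}^{(r)}$ (reduction of $f$ composed with the inclusion $\im f\subseteq I_p(f)$) and $\widetilde\Phi^{(r)}_{m|n}\colon\Sigma_{m|n}^{(r)}\to \End_{GL(m|n)}(V^{\otimes r}_{m|n})$ (reduction mod~$p$ of the inclusion $I_p(f)\hookrightarrow \End(V^{\otimes r}_{m|n,\mZ})$, which takes values in $GL(m|n)$-endomorphisms since this is the case integrally).

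For part~(i): by Lemma~\ref{LemApf}(i), $\dim_\k\Sigma_{m|n}^{(r)}=\dim_\mC(\mC\otimes \mZ S_r)-\dim_\mC\ker(\mC\otimes f)=\dim_\mC\im(\mC\otimes f)$. But $\mC\otimes f$ is exactly $\Phi^{(r)}_{m|n}$ over $\mC$, and over a field of characteristic zero the First Fundamental Theorem gives surjectivity onto $\End_{GL(V_{m|n,\mC})}(V^{\otimes r}_{m|n,\mC})$ (this is the $p=0$ case cited in the Introduction, or can be phrased via Remark~\ref{RP}(i)), so the image has the desired dimension. For part~(ii), injectivity of $\widetilde\Phi^{(r)}_{m|n}$ is precisely Lemma~\ref{LemApf}(ii) applied to $f$, and commutativity of the triangle is immediate from the definitions since both composites are the reduction mod~$p$ of $f$ regarded as landing in $\End(V^{\otimes r}_{m|n,\mZ})$.

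For part~(iii): since $\widetilde\Phi^{(r)}_{m|n}$ is an injective algebra morphism between finite-dimensional $\k$-algebras, it is an isomorphism if and only if source and target have equal $\k$-dimension; by part~(i) the dimension of the source is $\dim_\mC\End_{GL(V_{m|n,\mC})}(V^{\otimes r}_{m|n,\mC})$, and using the self-evident identification $\End_{GL(m|n)}=\End_{GL(V_{m|n})}$ this is exactly the stated numerical criterion. I expect no serious obstacle here: the only point that needs a little care is making sure the three maps in the triangle are genuinely well defined over $\k$ (i.e.\ that passing to $I_p(f)$ and then reducing mod~$p$ is compatible with the $GL(m|n)$-action and with the algebra structures), which is handled by noting that everything in sight is the base change of an honest integral statement, and that $\k$ is flat enough for the relevant short exact sequence from the proof of Lemma~\ref{StandLem} to stay exact after applying $\k\otimes_\mZ-$ on the pieces we need.
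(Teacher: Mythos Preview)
Your proposal is correct and follows essentially the same route as the paper's proof: part~(i) from Lemma~\ref{LemApf}(i) together with the characteristic-zero surjectivity of $\Phi$, part~(ii) from Lemma~\ref{LemApf}(ii), and part~(iii) by combining (i) and~(ii). One small correction: Remark~\ref{RP}(i) concerns \emph{injectivity} in characteristic zero, not surjectivity, so the relevant citation for the First Fundamental Theorem is only the Introduction (i.e.\ \cite{Berele}), not Remark~\ref{RP}(i).
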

\begin{proof}
Part (i) follows from Lemma~\ref{LemApf}(i) and surjectivity of $\mC S_r\to\End_{GL(\mC^{m|n})}(V_{m|n,\mC}^{\otimes r})$.
Part (ii) follows from Lemma~\ref{LemApf}(ii).
Part (iii) follows from the combination of parts (i) and (ii).
\end{proof}

\begin{remark}
Proposition~\ref{PropSigma} reveals a potential approach for super invariant theory in positive characteristic:
\begin{enumerate}
\item Show that $\End_{GL(m|n)}(V_{m|n}^{\otimes r})$ does not depend on the characteristic $p$.
\item Determine explicitly `where to invert powers of $p$' in $\mZ S_r$ (or its relevant quotient corresponding to the characteristic 0 case) before reducing mod $p$, in the creation of $\Sigma_{m|n}^{(r)}.$
\end{enumerate} 
\end{remark}

We demonstrate that, and how, this strategy works in the first case where surjectivity of $\Phi^{(r)}_{m|n}$ fails.
\begin{theorem}\label{ThmSigma}
Consider $p=3$, $r=5$, $m=2$ and $n=1$. 
\begin{enumerate}
\item[{\rm (i)}]  The morphism $\widetilde{\Phi}_{2|1}^{(5)}$ in Proposition~\ref{PropSigma}(ii) is an isomorphism.
\item[{\rm (ii)}]  Let $R$ be the subring of $\mZ[1/3]S_5$ generated by $S_5$ and $a/3$ for $a\in \mZ S_5$ the skew symmetriser, then
$$  \k\otimes R\;\simeq\; \End_{GL(2|1)}(V_{2|1}^{\otimes 5})\;\simeq\; \Sigma^{(5)}_{2|1}.$$
\end{enumerate}
\end{theorem}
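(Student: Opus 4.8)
The plan is to prove part (i) first and then deduce part (ii) by pinning down the lattice $\Sigma^{(5)}_{2|1,\mZ}=I_3(f)$ explicitly. For part (i), by Proposition~\ref{PropSigma}(iii), the statement $\widetilde{\Phi}^{(5)}_{2|1}$ is an isomorphism is equivalent to the numerical identity $\dim_\k\End_{GL(2|1)}(V_{2|1}^{\otimes 5})=\dim_\mC\End_{GL(2|1)}(V_{2|1,\mC}^{\otimes 5})$. The left-hand side is $120$ by Theorem~\ref{Thm21}(ii) (proved above via the explicit socle filtrations of the summands $Q(i)$ and Proposition~\ref{PropBound}). The right-hand side is computed by the classical (characteristic $0$) first and second fundamental theorems for $GL(m|n)$: since $5\le m+n+mn=5$, the morphism $\Phi^{(5)}_{2|1,\mC}$ is both injective and surjective (Remark~\ref{RP}(i)), so $\dim_\mC\End_{GL(2|1)}(V_{2|1,\mC}^{\otimes 5})=\dim_\mC\mC S_5=120$. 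Hence the two dimensions agree and part (i) follows.

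For part (ii), write $f:\mZ S_5\to\End(V_{2|1,\mZ}^{\otimes 5})$ and recall $\Sigma^{(5)}_{2|1,\mZ}=I_3(f)=\im(\widetilde f)\cap\End(V_{2|1,\mZ}^{\otimes 5})$, an overring of $\im f\cong\mZ S_5/\ker f$ inside $\mZ[1/3]S_5/\ker(\mZ[1/3]\otimes f)$. Since $\Phi^{(5)}_{2|1,\mC}$ is injective, $\ker(\mZ[1/3]\otimes f)=0$, so we may regard $I_3(f)$ as a subring of $\mZ[1/3]S_5$ containing $S_5$, with $\mZ[1/3]\otimes I_3(f)=\mZ[1/3]S_5$ and $I_3(f)/\mZ S_5$ a $3$-torsion group (Lemma~\ref{LemApf}(i) and its proof). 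By part (i) and Lemma~\ref{LemApf}(i) we have $\dim_\k(\k\otimes I_3(f))=120=\dim_\k\k S_5$, so $[I_3(f):\mZ S_5]$ is a power of $3$ equal to $|\,\k\otimes(\text{torsion part})\,|$; a rank count shows in fact $[I_3(f):\mZ S_5]=3$. The elements of $I_3(f)$ are exactly those $x\in\mZ[1/3]S_5$ whose image under $\mZ[1/3]S_5\to\End(V_{2|1,\mZ[1/3]}^{\otimes 5})$ lands in the integral endomorphism lattice; equivalently, using that $S_5$-coinvariants on $V^{\otimes 5}$ behave well, those $x$ that act integrally on $V_{2|1,\mZ}^{\otimes 5}$.

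The key computation is to identify this index-$3$ overring with $R=\mZ[1/3]S_5\supset\mZ S_5$ generated by $S_5$ and $a/3$, where $a=\sum_{\sigma\in S_5}\operatorname{sign}(\sigma)\sigma$ is the skew symmetriser. One inclusion $R\subseteq I_3(f)$ amounts to checking that $(a/3)$ acts on $V_{2|1,\mZ}^{\otimes 5}$ by an integral endomorphism: indeed $\Phi(a)$ is (up to sign) the projector onto the one-dimensional weight space $\sv_1\sv_2\sv_1'\cdots$ direction picked out by $\Lambda^5 V$, which is visibly divisible by $3$ on the integral lattice once one notes that $a$ acts as $0$ mod $3$ on $V^{\otimes 5}$ while $a\neq 0$ in $\k S_5$ forces the image of $a/3$ to be a well-defined nonzero integral operator (this is precisely the phenomenon behind Lemma~\ref{Lem21b} and $\dim\im\Phi^{(5)}_{2|1}=119<120$). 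For the reverse inclusion $I_3(f)\subseteq R$, since both are index-$3$ extensions of $\mZ S_5$ inside $\mZ[1/3]S_5$ and $\mZ[1/3]\otimes R=\mZ[1/3]S_5$, it suffices to show $R$ is the \emph{unique} subring strictly between $\mZ S_5$ and the index-$9$ lattice $\frac13\mZ S_5\cap(\text{something})$ containing $\im f$; concretely one shows the $3$-torsion of $I_3(f)/\mZ S_5$ is spanned by the class of $a/3$ by identifying the single "new" direction coming from the gap $\dim\End=120$ versus $\dim\im\Phi=119$. Finally, the displayed chain of isomorphisms in (ii) is then $\k\otimes R=\k\otimes I_3(f)=\Sigma^{(5)}_{2|1}\xrightarrow{\ \widetilde\Phi^{(5)}_{2|1}\ }\End_{GL(2|1)}(V_{2|1}^{\otimes 5})$, the last arrow being the isomorphism from part (i).

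The main obstacle is the reverse inclusion $I_3(f)\subseteq R$: one must rule out that $I_3(f)$ contains some other element of $\frac13\mZ S_5$ not expressible via $S_5$ and $a/3$. The cleanest route is dimension-driven: having established $[I_3(f):\mZ S_5]=3$ from part (i), it is enough to exhibit a single non-integral-on-$\mZ S_5$ element of $I_3(f)$, namely $a/3$, and conclude by order considerations that $I_3(f)=\mZ S_5+\mZ\cdot(a/3)=R$; verifying that $a/3$ genuinely lies in $I_3(f)$ (i.e.\ that $\Phi(a)$ is divisible by $3$ on the integral lattice, with integral quotient) is the concrete calculation, carried out by exhibiting a $\mZ$-basis of $\im f$ and of $\End(V_{2|1,\mZ}^{\otimes 5})$ adapted to the weight-space decomposition~\eqref{ETensWts}.
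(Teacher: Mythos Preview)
Your part~(i) is correct and identical to the paper's argument: Proposition~\ref{PropSigma}(iii), Theorem~\ref{Thm21}(ii), and the observation that $\Phi^{(5)}_{2|1,\mC}$ is bijective since $5=m+n+mn$.

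For part~(ii) your strategy matches the paper's (identify the preimage of $I_3(f)$ in $\mZ[1/3]S_5$ with $R=\mZ S_5+\mZ\cdot(a/3)$), but the key step has a gap. You correctly deduce from $\dim_\k\ker\Phi=1$ that $Q:=I_3(f)/\mathrm{im}\,f$ is a cyclic $3$-group and that $a/3$ gives an order-$3$ class in $Q$. However the assertion ``a rank count shows $[I_3(f):\mZ S_5]=3$'', repeated later as ``having established $[I_3(f):\mZ S_5]=3$ from part~(i)'', is not justified: the comparison $\dim_\k\Sigma^{(5)}_{2|1}=120$ versus $\dim_\k\mathrm{im}\,\Phi=119$ only gives $\dim_\k(Q/3Q)=1$, i.e.\ that $Q$ is cyclic, \emph{not} that $|Q|=3$. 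Unwinding, one checks that $|Q|=3$ is equivalent to $\bar b:=f(a)/3\bmod 3\notin\mathrm{im}\,\Phi$, whereas the check $f(a)/9\notin\End(V^{\otimes 5}_\mZ)$ that you and the paper both perform only gives $\bar b\neq 0$; it does not exclude $f(a-3y)\in 9\,\End(V^{\otimes 5}_\mZ)$ for some $y\in\mZ S_5$, which would produce an order-$9$ element of $Q$. The paper's proof is equally terse at precisely this point.

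To close the gap: $\bar b$ is $GL(2|1)$-equivariant (it lies in the image of $\k\otimes I_3(f)\hookrightarrow\End_{GL(2|1)}(V^{\otimes 5})$) and has image contained in the sign-$S_5$-isotypic component $\wedge^5V\cong L(1,1|3)$. The explicit analysis proving Theorem~\ref{Thm21}(ii) shows $[V^{\otimes 5}:L(1,1|3)]=1$ (it occurs only in $Q(1)$, as a direct summand), so $\Hom_{GL(2|1)}(V^{\otimes 5},\wedge^5V)$ is one-dimensional and $\bar b$ is a nonzero scalar multiple of the splitting idempotent $e$ onto $\wedge^5V$. But $e\notin\mathrm{im}\,\Phi$ by Lemmata~\ref{Lem21a}(ii) and~\ref{Lem21b}. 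Hence $\bar b\notin\mathrm{im}\,\Phi$, so $|Q|=3$, and then your order argument does give $I_3(f)=\mZ S_5+\mZ\!\cdot\!(a/3)$ and the displayed isomorphisms.
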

\begin{proof}
Part (i) follows immediately from Proposition~\ref{PropSigma}(iii) and Theorem~\ref{Thm21}(ii). Note also that
$$\mZ S_5\;\to\;\End(V_{2|1,\mZ}^{\otimes 5})$$
is injective.

By Theorem~\ref{Thm21}(iii), the kernel of $\Phi_{2|1}^{(5)}$ is 1-dimensional and therefore spanned by the skew symmetriser $a$. It follows easily that in the integral form $a/3$ is sent to an element of $\End(V_{2|1,\mZ}^{\otimes 5})$, whereas $a/9$ is not. The conclusion now follows.
\end{proof}
%By construction the dimension of the image of $\widetilde{\Phi}$ is at least that of $\Phi$. By the above paragraph it is actually strictly bigger, so by Theorem~\ref{119} $\widetilde{\Phi}$ is surjective. Since the dimension of $\Sigma_5^{(2|1)}$ is $5!=120$, see Proposition~\ref{PropSigma}, part (1) follows.

%\begin{remark}
%Unclear whether for fixed $m,n,p$, we will really 'need' to keep inverting higher and higher powers of $p$ as $r$ increases.
%\end{remark}
%
%
%\begin{remark}
%Another way to motivate the algebra $\Sigma_5$ is as follows. For a finite group $G$ and some simple $\mQ G$-rep we `want to keep' mod $p$, we can consider a $\mZ_{(p)}$-algebra $\mZ_{(p)}G\subset A\subset \mQ G$ just by adding the idempotent as a generator and then consider $k\otimes A$ rather than $kG$ to make it `slightly more semisimple'. This is precisely what we do for the idempotent corresponding to the sign representation for $S_5$. The exact same thing happens for the Brauer algebra below.
%\end{remark}

\subsection{Characteristic 2}

It is observed in \cite{DF} that the traditional first fundamental theorem for the orthogonal group, for instance formulated as surjectivity of
\begin{equation}\label{BrO}
\cB_r(m)\;\to\; \End_{O(m)}(V_m^{\otimes r}),
\end{equation}
also fails for $p=2$ (it is known to hold for all $p\not=2$, see \cite{DH}).
We show that the procedure of the previous subsection also has potential here.

%We can also apply Construction~\ref{Constr} to the integral version of invariant theory for the orthogonal group, to salvage some of the failure of the first fundamental theorem observed in \cite{DF}.

\subsubsection{}We focus on $O(2)\simeq\mG_m\rtimes \mZ/2$. Denote by $B_2(2)$ the integral Brauer algebra with basis $1,s,e$ (with $e^2=2e$, $se=e=es$ and $s^2=1$).

For $p=2$, denote by $\widetilde{\cB}_2(2)$ the $\k$-algebra obtained by specialising the subring of $\mZ[1/2]\otimes B_2(2)$ generated by $B_2(2)$ and $(1+s-e)/2$. We have a canonical algebra morphism
$$\cB_2(0)=\cB_2(2)\;\to\; \widetilde{\cB}_2(2)$$
and this is precisely what one obtains from applying Construction~\ref{Constr} to \eqref{BrO}.
\begin{prop}\label{Prop2}
For $p=2=m=r$, the algebra morphism \eqref{BrO}
has kernel and cokernel of dimension 1, but extends to an isomorphism
$$ \widetilde{\cB}_2(2)\;\xrightarrow{\sim}\; \End_{O(2)}(V^{\otimes 2}).$$
\end{prop}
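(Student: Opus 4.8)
The plan is to compute both sides of the isomorphism completely by hand, using the explicit description $O(2)\cong\mG_m\rtimes\mZ/2$. Fix a basis $\epsilon_1,\epsilon_2$ of $V$ in which the defining form is hyperbolic, so that $\mG_m$ acts by $\epsilon_1\mapsto t\epsilon_1$, $\epsilon_2\mapsto t^{-1}\epsilon_2$ and the component group is generated by the involution $\sigma$ swapping $\epsilon_1$ and $\epsilon_2$; this presentation is valid over $\mZ$, hence over $\mQ$ and over $\k$. First I would decompose $V^{\otimes 2}$ into $\mG_m$-weight spaces: weights $2,0,0,-2$, with one-dimensional spaces spanned by $\epsilon_1\otimes\epsilon_1$ and $\epsilon_2\otimes\epsilon_2$ and a two-dimensional weight-zero space spanned by $\epsilon_1\otimes\epsilon_2$ and $\epsilon_2\otimes\epsilon_1$. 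A $\mG_m$-equivariant endomorphism is block diagonal for this decomposition, and imposing in addition $\sigma$-equivariance (which swaps the two weight-$\pm 2$ lines and the two weight-zero basis vectors) cuts the six parameters down to three. This computation is characteristic-free, so $\dim_\k\End_{O(2)}(V^{\otimes 2})=3$ for every $p$, with an explicit three-element basis that I would record.

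Next I would write down the images of the Brauer generators $1,s,e$ of $\cB_2(2)$ in $\End(V^{\otimes 2})$: the identity, the flip $u\otimes v\mapsto v\otimes u$, and $u\otimes v\mapsto b(u,v)(\epsilon_1\otimes\epsilon_2+\epsilon_2\otimes\epsilon_1)$, where $b$ is the bilinear form. A short computation shows that $\alpha\cdot 1+\beta s+\gamma e$ maps to zero precisely when $\alpha+\beta=\alpha+\gamma=\beta+\gamma=0$. Over $\mQ$ this forces $\alpha=\beta=\gamma=0$, so \eqref{BrO} is injective there, and since its three-dimensional source maps into the three-dimensional space $\End_{O(2)}(V_\mQ^{\otimes 2})$ it is an isomorphism over $\mQ$ (consistent with the first fundamental theorem for $p\ne 2$, \cite{DH}). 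Over $\k$ with $p=2$ the same linear system has a one-dimensional solution space spanned by $1+s+e$, equivalently $1+s-e$, so \eqref{BrO} has kernel of dimension $1$ and image of dimension $2$; as the target has dimension $3$, the cokernel has dimension $1$. This proves the first assertion.

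For the extension statement I would apply Construction~\ref{Constr} and Lemma~\ref{LemApf} to the integral map $f\colon B_2(2)\to\End(V_\mZ^{\otimes 2})$. Since $\mQ\otimes f$ is an isomorphism by the previous step, $f$ is injective with $\ker f=0$, so Lemma~\ref{LemApf}(i) gives $\dim_\k\widetilde{\cB}_2(2)=\dim_\k(\k\otimes I_p(f))=\operatorname{rank}B_2(2)=3$, and Lemma~\ref{LemApf}(ii) gives that the induced map $\widetilde{\cB}_2(2)\to\End_\k(V^{\otimes 2})$ is injective. This map lands in $\End_{O(2)}(V^{\otimes 2})$: elements of $I_p(f)$ are integral endomorphisms lying in the $\mZ[1/2]$-span of the $O(2)_\mZ$-equivariant maps $f(1),f(s),f(e)$, and an integral $\mZ[1/2]$-combination of $O(2)_\mZ$-equivariant endomorphisms is again $O(2)_\mZ$-equivariant, hence so is its reduction mod $2$ and base change to $\k$. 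Therefore $\widetilde{\cB}_2(2)\hookrightarrow\End_{O(2)}(V^{\otimes 2})$ is an injection between three-dimensional spaces, hence an isomorphism. To pin down the hands-on description, I would observe that $g:=\tfrac12 f(1+s-e)$ is the integral endomorphism fixing $\epsilon_1\otimes\epsilon_1$ and $\epsilon_2\otimes\epsilon_2$ and killing the weight-zero space, that $g^2=g=sg=gs$ and $ge=eg=0$, and that a direct check shows no further denominators occur; thus $I_p(f)=\mZ\langle 1,s,e,g\rangle$ is the image of the subring of $\mZ[1/2]\otimes B_2(2)$ generated by $B_2(2)$ and $(1+s-e)/2$, so $\widetilde{\cB}_2(2)=\k\otimes I_p(f)$ is exactly that specialisation.

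The content is all routine. The only points needing a little care are checking that the characteristic-$2$ invariant-theory computation really yields exactly three dimensions — safe because $\mG_m$ and the constant group $\mZ/2$ are both well behaved — and the bookkeeping identifying Construction~\ref{Constr} applied to \eqref{BrO} with the explicit subring $\mZ\langle 1,s,e,g\rangle$; I expect the latter verification to be the fiddliest part.
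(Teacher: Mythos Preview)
Your approach is correct and complete. Note, however, that the paper does not actually supply a proof of Proposition~\ref{Prop2}: the statement is followed immediately by the next subsection on double centralisers, so there is nothing to compare against. Your explicit computation---decomposing $V^{\otimes 2}$ into $\mG_m$-weight spaces, imposing $\sigma$-equivariance to get $\dim_\k\End_{O(2)}(V^{\otimes 2})=3$ in every characteristic, solving the linear system for the kernel of \eqref{BrO}, and then invoking Lemma~\ref{LemApf} for the extension---is exactly the kind of direct verification the authors presumably had in mind and left to the reader. The one step you flag as fiddly, namely identifying $I_p(f)$ with the subring generated by $B_2(2)$ and $(1+s-e)/2$, is indeed routine once you note that $g=\tfrac12 f(1+s-e)$ is integral and idempotent, so that $\mZ\langle f(1),f(s),f(e),g\rangle$ is already a rank-three lattice in the three-dimensional $\mQ$-algebra $\mQ\otimes\im f$ and hence equals $I_p(f)$.
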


\subsection{Double centralisers}

\begin{lemma}
Assume $p\not=2$.
The double centraliser of $\k S_r$ acting on $V_{m|n}^{\otimes r}$ is given by $\End_{GL(m|n)}(V_{m|n}^{\otimes r})$.
\end{lemma}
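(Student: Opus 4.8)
The plan is to recognise both sides of the claimed identity as invariants for the action of the Schur superalgebra, so that the double centraliser collapses to $\End_{GL(m|n)}$ essentially by citing the structural results already recalled in the paper. Write $E:=\End_{\k}(V_{m|n}^{\otimes r})$ and let $\Phi=\Phi^{(r)}_{m|n}\colon\k S_r\to E$ be the map from \eqref{EqPhi0}. By definition, the double centraliser of $\k S_r$ acting on $V_{m|n}^{\otimes r}$ is $\End_{B}(V_{m|n}^{\otimes r})$, where $B\subseteq E$ is the centraliser of the subalgebra $\Phi(\k S_r)$; equivalently $B=\End_{\k S_r}(V_{m|n}^{\otimes r})$. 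So the task reduces to showing $\End_{B}(V_{m|n}^{\otimes r})=\End_{GL(m|n)}(V_{m|n}^{\otimes r})$.

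First I would invoke \cite[2.3(1)]{Donkin} (see also \cite[Theorem~5.2]{BK}), as recalled in Section~\ref{SecDon}: the natural action of the Schur superalgebra $S(m|n,r)$ on $V_{m|n}^{\otimes r}$ identifies $S(m|n,r)$ with the subalgebra $B=\End_{\k S_r}(V_{m|n}^{\otimes r})$ of $E$; the point to keep track of is that this isomorphism is \emph{realised by the action} on $V_{m|n}^{\otimes r}$, so that $B$ is literally the image of $S(m|n,r)$ in $E$, not merely abstractly isomorphic to it. Next, since every weight of $V_{m|n}^{\otimes r}$ has total degree $r$, the $GL(m|n)$-module $V_{m|n}^{\otimes r}$ is polynomial of degree $r$; under the isomorphism between the category of supermodules over $S(m|n,r)$ and the category of degree-$r$ polynomial representations of $GL(m|n)$ — the one used in \ref{SchurGL}, see \cite[p.~25]{BK} and \cite[Corollary~3.5]{BK} — which is compatible with the underlying superspaces, the $GL(m|n)$-equivariant endomorphisms of $V_{m|n}^{\otimes r}$ are exactly its $S(m|n,r)$-linear, hence $B$-linear, endomorphisms. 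Combining,
$$\End_{GL(m|n)}(V_{m|n}^{\otimes r})=\End_{S(m|n,r)}(V_{m|n}^{\otimes r})=\End_{B}(V_{m|n}^{\otimes r}),$$
and the right-hand side is the double centraliser of $\k S_r$, which proves the lemma.

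There is essentially no genuine obstacle here: the argument is an assembly of the two cited structural facts, and the only point requiring care is to keep the various identifications implemented by the respective actions on $V_{m|n}^{\otimes r}$, so that the algebras in play coincide \emph{as subalgebras of} $E$ and the two module categories literally share the same underlying superspaces. The argument is uniform in $p\neq 2$, including $p=0$, where it recovers the classical double centraliser statement.
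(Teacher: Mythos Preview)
Your proof is correct and follows essentially the same approach as the paper's own proof, which simply says the lemma is a direct consequence of the two equivalent descriptions of the Schur superalgebra (as the centraliser of $\k S_r$, and as the algebra whose modules are the degree-$r$ polynomial $GL(m|n)$-representations). You have merely unpacked this one-line argument carefully, making explicit the identifications as subalgebras of $E$ and the compatibility of the category equivalence with underlying superspaces.
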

\begin{proof}
This is a direct consequence of the two equivalent definitions of the Schur superalgebra; one as the centraliser of $\k S_r$ and one as the algebra with represention theory given by polynomial representations of $GL(m|n)$, see \cite{Donkin}.
\end{proof}

\begin{corollary}
Assume $p=3$.
The double centraliser of $\k S_5$ acting on $V_{2|1}^{\otimes 5}$ is the `deformation' of $\k S_5$ described in Theorem~\ref{ThmSigma}(ii).
\end{corollary}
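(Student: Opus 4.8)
The plan is to deduce this directly from the two results it sits next to: the preceding Lemma, which identifies the double centraliser of $\k S_r$ on $V_{m|n}^{\otimes r}$ with $\End_{GL(m|n)}(V_{m|n}^{\otimes r})$, and Theorem~\ref{ThmSigma}, which for $p=3$, $r=5$, $(m,n)=(2,1)$ computes $\End_{GL(2|1)}(V_{2|1}^{\otimes 5})$ explicitly. First I would unwind the definition: writing $V:=V_{2|1}^{\otimes 5}$, the double centraliser of $\k S_5$ acting on $V$ is the subalgebra $\End_{\End_{\k S_5}(V)}(V)\subseteq \End_\k(V)$. Since $\End_{\k S_5}(V)\simeq S(2|1,5)$ by the first description of the Schur superalgebra and $S(2|1,5)$-modules are the degree-$5$ polynomial $GL(2|1)$-representations by its second description, this subalgebra is exactly $\End_{GL(2|1)}(V_{2|1}^{\otimes 5})$ — which is precisely the content of the preceding Lemma, applied with $r=5$, $(m,n)=(2,1)$.

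Next I would invoke Theorem~\ref{ThmSigma}(ii): with $R$ the subring of $\mZ[1/3]S_5$ generated by $S_5$ and $a/3$ (where $a$ is the skew symmetriser), one has $\k\otimes R\simeq \End_{GL(2|1)}(V_{2|1}^{\otimes 5})\simeq \Sigma^{(5)}_{2|1}$, the identification being the one realised concretely by $\widetilde{\Phi}_{2|1}^{(5)}$ of Proposition~\ref{PropSigma}(ii), which is an isomorphism here by Theorem~\ref{ThmSigma}(i) — equivalently by the dimension equality $\dim_\k\End_{GL(2|1)}(V^{\otimes 5})=120=\dim_\mC\End_{GL(\mC^{2|1})}(V_{2|1,\mC}^{\otimes 5})$ of Theorem~\ref{Thm21}(ii). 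Combining the two identifications shows that the double centraliser, viewed as a subalgebra of $\End_\k(V_{2|1}^{\otimes 5})$, is precisely the image of $\widetilde{\Phi}_{2|1}^{(5)}$, hence equals the deformation $\k\otimes R=\Sigma^{(5)}_{2|1}$ of $\k S_5$. This is exactly what the corollary asserts.

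Since every ingredient is already in place, I do not expect a genuine obstacle here: the corollary is a packaging of Theorem~\ref{ThmSigma} through the double-centraliser lemma. The only point that needs care is the bookkeeping in the first step — that the abstract double centraliser is \emph{literally} the subalgebra $\End_{GL(2|1)}(V_{2|1}^{\otimes 5})$ of $\End_\k(V_{2|1}^{\otimes 5})$, so that Theorem~\ref{ThmSigma}(ii) applies to it verbatim rather than merely up to an abstract isomorphism; this is supplied by the two equivalent descriptions of $S(2|1,5)$ recorded earlier in the text. With that identification made, the statement follows by concatenating the preceding Lemma and Theorem~\ref{ThmSigma}(i)--(ii).
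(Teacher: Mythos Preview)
Your proposal is correct and follows exactly the route the paper intends: the corollary is stated without proof because it is an immediate combination of the preceding Lemma (identifying the double centraliser with $\End_{GL(2|1)}(V_{2|1}^{\otimes 5})$) and Theorem~\ref{ThmSigma}(ii). Your added unwinding of the Lemma via the two descriptions of $S(2|1,5)$ is harmless elaboration of what the paper already sketches in that Lemma's proof.
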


\begin{remark}
 In contrast, the algebra $\End_{O(2)}(V^{\otimes 2}_2)$ from Proposition~\ref{Prop2} is not the double centraliser of $\cB_2(0)$. Indeed, one can compute directly that the double centraliser is just $\k S_2$.
\end{remark}

\iffalse{
\subsection*{Acknowledgement}
The work of K.C. was partially supported by ARC grant DP200100712. 

The work of A.K. was partially supported by the NSF grant DMS-2101791 and 
Charles Simonyi Endowment at the Institute for Advanced Study. A.K. also thanks the Isaac Newton Institute for Mathematical Sciences, Cambridge, for support and hospitality during the programme `Groups, representations and applications: new perspectives' where part of the work on this paper was undertaken. 

The work of P.E. was partially supported by the NSF grant DMS - 1916120.
}\fi

\end{document}